\documentclass[12pt]{amsart}
\usepackage{amsthm,amsmath,amssymb,amscd,graphicx,enumerate, stmaryrd,xspace,verbatim, epic, eepic,color,url}

\usepackage{eurosym}
\usepackage[all]{xypic}
\SelectTips{cm}{}
\usepackage{pdfsync}

%




\setlength{\parskip}{.5mm}
{
   \newtheorem{theorem}[subsubsection]{Theorem}
      \newtheorem*{theorem*}{Theorem}
   \newtheorem{proposition}[subsubsection]{Proposition} 
   
   \newtheorem{prop}[subsubsection]{Proposition}     
   \newtheorem{lemma}[subsubsection]{Lemma}

   \newtheorem{corollary}[subsubsection]{Corollary}

   \newtheorem*{conjecture*}{Conjecture}
   
}
{\theoremstyle{definition}
          \newtheorem*{exercise*}{Exercise}
   
   \newtheorem{example}[subsubsection]{Example}
   \newtheorem*{example*}{Example}
   \newtheorem{definition}[subsubsection]{Definition}
   
   \newtheorem*{definition*}{Definition}
   
   \newtheorem{remark}[subsubsection]{Remark}

%
\newcommand{\RR}{{\mathbb{R}}}

\newcommand{\QQ}{{\mathbb{Q}}}
\newcommand{\NN}{{\mathbb{N}}}

\newcommand{\PP}{{\mathbb{P}}}
\newcommand{\ZZ}{{\mathbb{Z}}}

\newcommand{\GG}{{\mathbb{G}}}

\renewcommand{\AA}{{\mathbb{A}}}

\newcommand{\bG}{{\mathbf{G}}}

\newcommand{\bGamma}{{\boldsymbol{\Gamma}}}

%


%

\newcommand{\cC}{{\mathcal C}}

\newcommand{\cM}{{\mathcal M}}

\newcommand{\cO}{{\mathcal O}}

\newcommand{\cX}{{\mathcal X}}
\newcommand{\cY}{{\mathcal Y}}
\newcommand{\cZ}{{\mathcal Z}}

\newcommand{\im}{{\operatorname{Im}}}

\def\<{\langle}
\def\>{\rangle}

\newcommand{\Spec}{\operatorname{Spec}}

\newcommand{\Hom}{{\operatorname{Hom}}}

\newcommand{\Aut}{{\operatorname{Aut}}}

\newcommand{\ocM}{\overline{{\mathcal M}}}

\newcommand{\oM}{{\overline{M}}}

\newcommand{\oSigma}{{\overline{\Sigma}}}
\newcommand{\osigma}{{\overline{\sigma}}}
\newcommand{\otau}{{\overline{\tau}}}
\newcommand{\oPhi}{{\overline\Phi}}

\newcommand{\Trop}{{\operatorname{Trop}}}
\newcommand{\trop}{{\operatorname{trop}}}

\newcommand{\An}{{\operatorname{an}}}
\newcommand{\an}{{\operatorname{an}}}
\newcommand{\val}{{\operatorname{val}}}

\newcommand{\TC}{\bGamma}
\newcommand{\WG}{{\mathbf G}}

\newcommand{\boldp}{{\mathbf p}}

\newcommand{\double}{\genfrac..{0pt}1
{\raise -2pt\hbox{$\scriptstyle\longrightarrow$}}{\raise 4pt\hbox
{$\scriptstyle\longrightarrow$}}} 

\renewcommand{\setminus}{\smallsetminus}







\def\tototi{\mathbin{\mathop{\otimes}\limits^{\raise-1pt\hbox
{$\scriptscriptstyle {\rm L}$}}}}

\def\indlim{\mathop{\vrule width0pt height7pt depth
4pt\smash{\lim\limits_{\raise 1pt\hbox to 14.5pt
{\rightarrowfill}}}}}
\def\projlim{\mathop{\vrule width0pt height7pt depth
4pt\smash{\lim\limits_{\raise 1pt\hbox to 14.5pt
{\leftarrowfill}}}}}

\newcommand\displaceamount{3pt}

\newcommand{\doubledown}{\ar@<\displaceamount>[d]\ar@<-\displaceamount>[d]}

\newcommand{\doubleup}{\ar@<\displaceamount>[u]\ar@<-\displaceamount>[u]}

\newcommand{\doubleright}{\ar@<\displaceamount>[r]\ar@<-\displaceamount>[r]}


\def\sp{q}

\begin{document}

\title{The tropicalization of the moduli space of curves}

\author[Abramovich]{Dan Abramovich}

\author[Caporaso]{Lucia Caporaso}

\author[Payne]{Sam Payne}

\address[Abramovich]{Department of Mathematics\\
Brown University\\
Box 1917\\
Providence, RI 02912\\
U.S.A.}
\email{abrmovic@math.brown.edu}

\address[Caporaso]{Dipartimento di Matematica e Fisica\\
Universit\`{a} Roma Tre\\
Largo San Leonardo Murialdo \\
I-00146 Roma\\  Italy }
\email{caporaso@mat.uniroma3.it}

\address[Payne]{Yale University
Mathematics Department\\
10 Hillhouse Ave\\
New Haven, CT 06511 \\ U.S.A.}
\email{sam.payne@yale.edu}

\thanks{Abramovich supported in part by NSF grants DMS-0901278, DMS-1162367 and a Lady Davis fellowship.  Payne supported in part by NSF grant DMS-1068689 and NSF CAREER grant DMS-1149054.}
\date{\today}

\begin{abstract}
We show that the skeleton of the Deligne-Mumford-Knudsen
moduli stack of stable curves is naturally identified with the moduli
space of extended tropical curves, and that this is compatible with the ``naive" set-theoretic tropicalization map. The proof passes through general structure results on the skeleton of a toroidal Deligne-Mumford stack. Furthermore, we construct tautological forgetful, clutching, and gluing maps between moduli spaces of extended tropical curves and show that they are compatible with the analogous tautological maps in the algebraic setting.
 \end{abstract}

\dedicatory{
Dedicated to Joe Harris.}

\maketitle

\setcounter{tocdepth}{1}
\tableofcontents

\section[test]{Introduction}
A number of researchers have introduced and studied the moduli spaces $M_{g,n}^{\trop}$, parametrizing certain metric weighted graphs called \emph{tropical curves},  and exhibited analogies to the Deligne-Mumford-Knudsen moduli stacks of stable pointed curves, $\ocM_{g,n}$, and to the Kontsevich moduli spaces of stable maps \cite{Mikhalkin1,Mikhalkin2,Gathmann-Markwig,Gathmann-Kerber-Markwig,Kozlov,CV,Kozlov2,BMV,C2,Chan, Chan-Melo-Viviani}.   The paper \cite{C2} describes, in particular, an order reversing correspondence between the stratification of  $M_{g,n}^{\trop}$ and the stratification of     $\ocM_{g,n}$, along with a natural compactification  $\oM_{g,n}^{\trop}$, the moduli space of {\em extended} tropical curves,  where the correspondence persists.  A seminal precursor for all of this work is the paper of Culler and Vogtmann on moduli of graphs and automorphisms of free groups, \cite{Culler-Vogtmann}, in which a space of metric graphs  called  ``outer space" was introduced.

The analogies between moduli of curves and moduli of graphs go further than the natural stratifications of compactifications.  As we show in Section \ref{Sec:tautological}, the moduli spaces  $\oM_{g,n}^{\trop}$ admit natural  maps $$\pi_{g,n}^\trop:\oM_{g,n+1}^{\trop}\to \oM_{g,n}^{\trop},\ \  i=1,\ldots,n+1$$ associated to ``forgetting the last marked point and stabilizing," analogous to the forgetful maps $\pi_{g,n}$ on the moduli spaces of curves. There are also clutching and gluing maps $$\kappa_{g_1,n_1,g_2,n_2}^\trop: \oM_{g_1,n_1+1}^{\trop} \times \oM_{g_2,n_2+1}^{\trop} \to  \oM_{g_1+g_2,n_1+n_2}^{\trop}$$  and $$\gamma_{g,n}^\trop: \oM_{g-1,n+2}^{\trop} \to \oM_{g,n}^{\trop}$$ covering the boundary strata of $\oM_{g,n}^{\trop} \setminus M_{g,n}^{\trop}$, analogous to the corresponding clutching and gluing maps $\kappa_{g_1,n_1,g_2,n_2}$ and $\gamma_{g,n}$ on the moduli spaces of curves.  When the various subscripts $g,n$ are evident we suppress them in the notation   for these maps.

The main purpose of this paper is to develop these analogies into a rigorous and functorial correspondence. Write $\oM_{g,n}$ for the coarse moduli space of $\ocM_{g,n}$. We start with set-theoretic maps from the associated Berkovich analytic space $\oM_{g,n}^\an$ to the tropical moduli space $\oM_{g,n}^\trop$, described in Definition \ref{Def:Trop} below, and use Thuillier's construction of canonical skeletons of toroidal Berkovich spaces  \cite{Thuillier} to show that these maps are continuous, proper, surjective, and compatible with the tautological forgetful, clutching, and gluing maps. We work extensively with the combinatorial geometry of {\em extended generalized cone complexes}, as presented in Section \ref{Sec:generalized}.

To study the skeleton of $\ocM_{g,n}$, we require a mild generalization of Thuillier's construction, presented in Section \ref{Sec:toroidalDM}, below; the main technical results are Propositions \ref{prop:Limit}, \ref{Prop:functoriality} and \ref{Prop:decomposition}. Given a proper toroidal Deligne--Mumford stack $\cX$ with coarse moduli space $X$, we  functorially construct an extended generalized cone complex, the \emph{skeleton} $\oSigma(\cX)$, which is both a topological closed subspace of the Berkovich analytic space $X^\an$ associated to $X$, and also the image of a canonical retraction 
\[
\boldp_\cX:X^\An\to \oSigma(\cX).
\]
We emphasize that the skeleton $\oSigma(\cX)$ depends on a toroidal structure on the stack $\cX$, but lives in the analytification of the coarse moduli space $X$, which is not necessarily toroidal.

The compactified moduli space of tropical curves $\oM_{g,n}^\trop$ is similarly an extended generalized cone complex,  and one of our primary tasks is to identify the tropical moduli space $\oM_{g,n}^\trop$ with the skeleton $\oSigma(\ocM_{g,n})$.
   See Theorem~\ref{Th:functor} for a precise statement.

\subsection{The tropicalization map}
There is a natural set theoretic {\em tropicalization map} $$\Trop: \oM_{g,n}^\an \to \oM_{g,n}^{\trop},$$ well-known to experts \cite{Tyomkin, BPR,  Viviani}, defined as follows.  A point $[C]$ in $\oM_{g,n}^\An$ is represented, possibly after a field extension, by a stable $n$-pointed curve $C$ of genus $g$ over the spectrum $S$ of a valuation ring $R$, with algebraically closed fraction field and valuation denoted $\val_C$. Let $\WG$ be the dual graph of the special fiber, as discussed in Section~\ref{Sec:dualgraph} below, where each vertex is weighted by the genus of the corresponding irreducible component, and with legs corresponding to the marked points.  For each edge $e_i$ in $\WG$, choose an \'etale neighborhood of the corresponding node in which the curve is defined by a local equation $xy = f_i$, with $f_i$ in $R$.  

\begin{definition}\label{Def:Trop}
The tropicalization of the point $[C] \in \oM_{g,n}^\An$ is the stable tropical curve $\TC = (\WG, \ell)$, with edge lengths given by
\[
\ell(e_i) = \val_C(f_i).
\]

\end{definition}

 \noindent See \cite[Lemma 2.2.4]{Viviani} for a proof that  the tropical curve $\TC$ so defined is independent of the choices of $R$, $C$, \'etale neigborhood, and local defining equation, so the map $\Trop$ is well defined.

\subsection{Main results}
Our first main result identifies the map $\Trop$ with the projection from $\oM_{g,n}^\An$ to its skeleton $\oSigma(\ocM_{g,n})$.

\begin{theorem}\label{Th:functor}
Let $g$ and $n$ be non-negative integers.
\begin{enumerate}
\item 
There is an isomorphism of generalized cone complexes with integral structure
$$\Phi_{g,n}: \Sigma(\ocM_{g,n}) \overset\sim\longrightarrow  M_{g,n}^{\trop}$$
extending uniquely to the compactifications
$$\oPhi_{g,n}:\oSigma(\ocM_{g,n}) \overset\sim\longrightarrow  \oM_{g,n}^{\trop}.$$
\item The following diagram is commutative:
$$\xymatrix{ \oM_{g,n}^\an \ar[rr]^{\boldp_{\ocM_{g,n}}}\ar[drr]_\Trop && \oSigma(\ocM_{g,n}) \ar[d]^{\oPhi_{g,n}} \\ 
&& \oM_{g,n}^{\trop}.
}$$
 In particular the map $\Trop$ is continuous, proper, and surjective.
\end{enumerate}
\end{theorem}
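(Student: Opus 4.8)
\emph{The plan is to} compare the two objects as extended generalized cone complexes, stratum by stratum, and then read off the compatibility with $\Trop$ from the construction of the skeleton retraction. For part (1), I would first unwind the presentation of $\Sigma(\ocM_{g,n})$ supplied by Proposition~\ref{Prop:decomposition}: the skeleton is the colimit of the cones attached to the closed toroidal strata, with maps induced by inclusions of strata and by automorphisms. The boundary strata of $\ocM_{g,n}$ are indexed by stable weighted graphs $\WG$ of genus $g$ with $n$ legs, the stratum $\cM_\WG$ being the image of $\prod_v \ocM_{g_v,n_v}$, of codimension $\#E(\WG)$. By the deformation theory of stable curves (Deligne--Mumford, Knudsen), an \'etale neighborhood of a point of $\cM_\WG$ is the product of a smooth chart for the components with the versal deformation $\Spec k[[t_e : e\in E(\WG)]]$ of the nodes, the node $e$ having local equation $xy=t_e$. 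Hence the toroidal monoid at $\cM_\WG$ is the free monoid $\NN^{E(\WG)}$, the associated cone is $\sigma_\WG=\RR_{\geq 0}^{E(\WG)}$, the group $\Aut(\WG)$ acts by permuting coordinates, automorphisms of a stable curve fixing its dual graph (component automorphisms, and the swap of the two branches at a node) act trivially on the $t_e$, and contracting an edge $e$ corresponds to the face $\{x_e=0\}$ of $\sigma_\WG$ together with passage to the larger graph. Thus $\Sigma(\ocM_{g,n})$ is the colimit, over the category of stable weighted graphs with edge contractions and automorphisms, of the cones $\RR_{\geq 0}^{E(\WG)}$.

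On the other side, this colimit is, by construction, the tropical moduli space $M_{g,n}^{\trop}$ (see \cite{C2,Chan}, following \cite{BMV}): a point of $\RR_{\geq 0}^{E(\WG)}$ records an edge length $\ell(e)$ for each $e$, and the colimit relations are precisely ``contract edges of length $0$'' and ``identify by graph automorphisms.'' The identity maps on the cones $\RR_{\geq 0}^{E(\WG)}$ therefore assemble into an isomorphism of generalized cone complexes with integral structure $\Phi_{g,n}\colon \Sigma(\ocM_{g,n})\to M_{g,n}^{\trop}$, which on $\sigma_\WG$ carries the toroidal coordinate $t_e$ to the length $\ell(e)$. The extension to compactifications is then formal: $\oSigma(\ocM_{g,n})$ is the canonical compactification of $\Sigma(\ocM_{g,n})$ obtained by replacing each $\RR_{\geq 0}^{E(\WG)}$ by $\overline{\RR}_{\geq 0}^{\,E(\WG)}$ (Section~\ref{Sec:generalized}), $\oM_{g,n}^{\trop}$ is the canonical compactification of $M_{g,n}^{\trop}$ in the same sense (\cite{C2}), and canonical compactification is a functor, so $\Phi_{g,n}$ extends uniquely to an isomorphism $\oPhi_{g,n}$.

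For part (2), let $[C]\in\oM_{g,n}^\an$; after a field extension it is represented by a stable $n$-pointed genus $g$ curve $C$ over $S=\Spec R$ with $R$ a valuation ring with algebraically closed fraction field and valuation $\val_C$, as in Definition~\ref{Def:Trop}. The classifying map $S\to\ocM_{g,n}$ sends the special point into $\cM_\WG$, where $\WG$ is the dual graph of the special fiber, and in the \'etale-local toroidal chart above the smoothing parameter $t_{e_i}$ pulls back to $f_i\in R$ with $xy=f_i$ the local equation of the node $e_i$. By the explicit description of the skeleton retraction on toroidal coordinates (Thuillier's construction as generalized in Proposition~\ref{prop:Limit}), $\boldp_{\ocM_{g,n}}([C])$ is the point of $\sigma_\WG\subset\oSigma(\ocM_{g,n})$ with $t_{e_i}\mapsto\val_C(f_i)$. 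Applying $\oPhi_{g,n}$, which identifies $t_{e_i}$ with $\ell(e_i)$, yields the tropical curve $\TC=(\WG,\ell)$ with $\ell(e_i)=\val_C(f_i)$, which is exactly $\Trop([C])$ by Definition~\ref{Def:Trop}; independence of the choices is automatic from the construction of $\boldp_{\ocM_{g,n}}$. This gives commutativity of the triangle. Since $\boldp_{\ocM_{g,n}}$ is continuous, proper, and surjective onto its skeleton by the general properties established in Section~\ref{Sec:toroidalDM} (after Thuillier), and $\oPhi_{g,n}$ is a homeomorphism, the composite $\Trop=\oPhi_{g,n}\circ\boldp_{\ocM_{g,n}}$ is continuous, proper, and surjective.

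\emph{The main obstacle} is part (1): showing that the colimit presentation of $\Sigma(\ocM_{g,n})$ produced by the abstract machinery of Section~\ref{Sec:toroidalDM} coincides with the combinatorial definition of $M_{g,n}^{\trop}$. This rests on two points. The first is the local structure of $\ocM_{g,n}$ along its boundary: that the locus of nodal curves is a normal crossings (toroidal) divisor and that the node-smoothing parameters are free local toroidal coordinates on which $\Aut(\WG)$ acts by permutation. The second is a careful accounting of automorphisms, separating automorphisms of a stable curve that move the dual graph from those that fix it and act trivially on $\sigma_\WG$ (component automorphisms, hyperelliptic-type involutions, and branch swaps at non-separating nodes), and in particular verifying that a loop edge contributes a single length parameter with the half-edge swap acting trivially on that parameter. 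Once these are in place, both parts reduce to matching definitions, and the points at infinity cause no further trouble because both compactifications are the canonical one of Section~\ref{Sec:generalized}.
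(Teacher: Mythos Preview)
Your proposal is correct and follows essentially the same approach as the paper: identify each toroidal stratum cone with $\sigma_\WG=\RR_{\geq 0}^{E(\WG)}$ via the node-smoothing parameters in a versal chart, compute the monodromy as $\Aut(\WG)$ acting by permutation of edges, recognize the resulting colimit diagram as the defining diagram for $M_{g,n}^\trop$, and then read off part~(2) from the explicit formula~\eqref{Eq:description-p} for $\boldp$. The only organizational difference is that the paper isolates the monodromy computation as a separate statement (Proposition~\ref{prop:full-monodromy}), proved cleanly via the Galois cover $\widetilde\cM_\WG\to\cM_\WG$ of Section~\ref{sec:autg-cover}, whereas you argue it directly by inspecting how curve automorphisms act on the $t_e$; both arguments are valid and yield the same conclusion.
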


The theorem is proven in Section \ref{Sec:proofs}. 

Our second main result shows that the map $\Trop$ is compatible with the tautological forgetful, clutching, and gluing maps.

\begin{theorem}\label{Th:tautological}
The following diagrams are commutative.

The universal curve diagram:
$$\xymatrix{
\oM_{g,n+1}^\an\ar[rr]^{\Trop}\ar[d]_{\pi^\an} &&
 \oM_{g,n+1}^{\trop}\ar[d]^{\pi^\trop} \\
\oM_{g,n}^\an\ar[rr]^{\Trop} && \oM_{g,n}^{\trop}
,} $$ 
the gluing diagram:  $$ \xymatrix{
\oM_{g-1,n+2}^\an\ar[rr]^{\Trop}\ar[d]_{\gamma^\an} &&
 \oM_{g-1,n+2}^{\trop}\ar[d]^{\gamma^\trop} \\
\oM_{g,n}^\an\ar[rr]^{\Trop} && 
\oM_{g,n}^{\trop},
}
$$
and the clutching diagram:
$$\xymatrix{
\oM_{g_1,n_1+1}^\an\times \oM_{g_2,n_2+1}^\an \ar[rrr]^{\Trop\times \Trop}\ar[d]_{\kappa^\an} &&&
 \oM_{g_1,n_1+1}^{\trop} \times \oM_{g_2,n_2+1}^{\trop}\ar[d]^{\kappa^\trop} \\
\oM_{g,n}^\an\ar[rrr]^{\Trop} && &
\oM_{g,n}^{\trop}.
}$$
\end{theorem}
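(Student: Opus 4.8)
The plan is to verify each of the three squares by unwinding the set-theoretic definition of $\Trop$ (Definition~\ref{Def:Trop}) and tracking the effect of the corresponding algebraic tautological map on stable curves over valuation rings, hence on weighted dual graphs equipped with their edge-length metrics. Fix a point of the source analytic space and represent it, after a field extension, by a stable pointed curve $\cC$ over $S=\Spec R$, with $R$ a valuation ring having algebraically closed fraction field; let $\WG$ be the weighted dual graph of the special fiber $\cC_0$, with $\ell(e_i)=\val_C(f_i)$ for local equations $xy=f_i$ at the nodes. For each tautological map $f$ I would exhibit a stable pointed curve over $S$ representing the image of the chosen point under $f^\an$, read off the weighted dual graph and metric of its special fiber, and compare the outcome with the combinatorial recipe defining $f^\trop$ on $(\WG,\ell)$ from Section~\ref{Sec:tautological}.

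The gluing and clutching squares are the easy cases. For $\gamma$, the curve $\gamma(\cC)$ over $S$ is obtained by gluing the sections $\sigma_{n+1}$ and $\sigma_{n+2}$ to one another along $S$; this is a stable genus-$g$, $n$-pointed curve whose special fiber is $\cC_0$ with the two points $\sigma_{n+1}(0),\sigma_{n+2}(0)$ identified to a node. Hence its weighted dual graph is $\WG$ with a single new edge $e$ joining the vertices that carried legs $n+1$ and $n+2$ (a loop if these coincide), all genus weights unchanged and the two legs deleted. Near the new node the family is defined by $xy=0$, so $\ell(e)=\val_C(0)=\infty$, which is exactly the operation defining $\gamma^\trop$ in the extended tropical moduli space; this proves commutativity of the gluing square. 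The clutching square is the same computation applied to the disjoint union $\cC_1\sqcup\cC_2$ of representatives of the two factors, gluing $\sigma^{(1)}_{n_1+1}$ to $\sigma^{(2)}_{n_2+1}$ and again producing one new edge of length $\infty$. Beyond bookkeeping, the only thing to check here is that $\gamma^\an$ and $\kappa^\an$ really are induced by these gluing operations on families, which is the defining property of the algebraic clutching and gluing maps.

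The forgetful square is the substantive case. Here $\pi(\cC)$ is the stabilization over $S$ of the $n$-pointed curve obtained from $\cC$ by deleting $\sigma_{n+1}$. On special fibers this amounts to deleting the leg $n+1$ from $\WG$ and then repeatedly \emph{destabilizing}: a genus-$0$ valence-$2$ vertex with two edge-ends is contracted, its two incident edges being merged into one; a genus-$0$ valence-$2$ vertex carrying a leg is deleted by sliding the leg to the adjacent vertex; and a genus-$0$ valence-$1$ vertex with its pendant edge is deleted outright. The point needing proof is that when a chain of unstable rational components is contracted the length of the resulting through-going edge is the sum of the lengths of the contracted edges. I would prove this by choosing compatible local parameters along the contracted chain: if consecutive nodes on a contracted $\PP^1$ have local equations $xy=f$ and $zw=f'$ over $R$, the node surviving after contraction has a local equation whose defining element has valuation $\val_C(f)+\val_C(f')$; equivalently, this is the local toric statement for the toroidal morphism $\pi\colon\ocM_{g,n+1}\to\ocM_{g,n}$. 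Running the destabilization to completion yields precisely $\pi^\trop(\WG,\ell)$ as defined in Section~\ref{Sec:tautological}, giving the claim. As an alternative for this case, since $\pi$ is a toroidal morphism of toroidal Deligne--Mumford stacks one may instead invoke Proposition~\ref{Prop:functoriality} to obtain a map of skeleta $\oSigma(\ocM_{g,n+1})\to\oSigma(\ocM_{g,n})$ compatible with the retractions $\boldp$, transport it through the isomorphisms $\oPhi$ of Theorem~\ref{Th:functor}, and identify it cone-by-cone with the contraction map defining $\pi^\trop$.

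I expect the main obstacle to be exactly this length-additivity under contraction in the forgetful case, together with its case analysis — the forgotten point lying in the smooth locus, on a rational bridge, on a rational tail, or colliding with another marked point, and the degenerate small values of $(g,n)$ — and the complementary check that the operations of Section~\ref{Sec:tautological} defining $\pi^\trop,\gamma^\trop,\kappa^\trop$ are normalized to match these operations on the nose. The gluing and clutching squares, by contrast, are essentially formal once one observes that each glued section contributes an edge of infinite length. If one prefers to route the whole argument through skeleta, Theorem~\ref{Th:functor} supplies the identity $\Trop=\oPhi_{g,n}\circ\boldp_{\ocM_{g,n}}$, and it then suffices to establish the same three squares with $\Trop$ replaced by the retraction $\boldp$ and $f^\trop$ by the induced map of skeleta: for $\pi$ this is Proposition~\ref{Prop:functoriality}, and for $\kappa$ and $\gamma$ one uses the decomposition of the skeleton into cones indexed by boundary strata (Proposition~\ref{Prop:decomposition}), matching the clutched or self-glued weighted graph with the corresponding boundary stratum of $\ocM_{g,n}$.
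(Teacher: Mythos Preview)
Your proposal is correct and follows essentially the same route as the paper: a direct pointwise verification via Definition~\ref{Def:Trop} on a representing family over a valuation ring, with the gluing and clutching squares reducing to the observation that the newly created node has local equation $xy=0$ (hence edge length $\infty$), and the forgetful square reducing to the length-additivity statement that the node surviving contraction of a two-pointed rational bridge has local equation $xy=f_1f_2$; the paper also records, as you do, the alternative functorial argument via sub-toroidal morphisms and Proposition~\ref{Prop:functoriality}. One minor simplification: a single destabilization step always suffices---since the original graph is stable the vertex carrying leg $n{+}1$ has valence $\geq 3$, so after removing that leg it has valence $\geq 2$, and one application of your case~(1) or case~(2) restores stability; in particular your valence-$1$ ``rational tail'' case never arises.
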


Both notation and proofs are provided in Section \ref{Sec:tautological}

\subsection{Fans, complexes, skeletons and tropicalization} \label{Sec:tropicalization}
There are several combinatorial constructions in the literature relating algebraic varieties to polyhedral cone complexes, and we move somewhat freely among them in this paper.  
 The following is a brief description of the key basic notions,
more details will be given in the sequel. 

Classical tropicalization studies a subvariety of a torus $T$ over a valued field by looking at its image in $N_\RR$, the real extension of the lattice of 1-parameter subgroups of the torus, under the coordinate-wise valuation map.  This basic idea has been generalized in several ways.  For algebraic subvarieties of toric varieties, there are extended tropicalization maps to natural partial compactifications on $N_\RR$ \cite{Kajiwara, Payne, Rabinoff}.  Similar ideas about extending and compactifying tropicalizations appeared earlier in \cite{Mikhalkin2, SS}.

Tropicalization is closely related to several other classical constructions:

\subsubsection{Fans of toric varieties} A toric variety $X$ with dense torus $T$ corresponds naturally to a fan $\Sigma(X)$ in $N_\RR$.  These appear in \cite[I.2]{KKMS}, where they are called ``f.r.p.p. decompositions''.  See also \cite[1.1]{Oda} and \cite[1.4]{FultonToric}.  One key feature of fans, as opposed to abstract cone complexes, is that all of the cones in a fan come with a fixed embedding in an ambient vector space.

\subsubsection{Complexes of toroidal embeddings} In \cite[Chapter II]{KKMS}, the construction associating a fan to a toric variety is generalized to spaces that look locally sufficiently like toric varieties.  To each toroidal embedding without self-intersection $U \subset X$, they associate an abstract rational polyhedral cone complex with integral structure, also denoted $\Sigma(X)$.  Some authors also refer to these cone complexes as fans \cite{KatoToric, Thuillier}, although they do not come with an embedding in an ambient vector space.  For a toroidal embedding $U \subset X$ with self-intersections, Thuillier constructs a generalized cone complex, obtained as a colimit of a finite diagram of rational polyhedral cones with integral structure, which we again denote $\Sigma(X)$.  See \cite[3.3.2]{Thuillier}.  Note that both fans and cone complexes associated to toroidal embeddings without self-intersection are special cases of Thuillier's construction, so the notation is not ambiguous.

\subsubsection{Extended complexes and skeletons}\label{Sec:skeletons} Thuillier also introduced natural compactifications of his generalized cone complexes; the more classical $\Sigma(X)$ is an open dense subset of this extended generalized cone complex $\oSigma(X)$.  The boundary $\oSigma(X) \smallsetminus \Sigma(X)$, is sometimes called the ``part at infinity", and then $\Sigma(X)$ is referred to as the ``finite part" of $\oSigma(X)$.  See  \cite[Sections~3.1.2 and 3.3.2]{Thuillier}.  The extended generalized cone complex $\oSigma(X)$ is also called the \emph{skeleton} of the toroidal scheme $X$.  It is an instance of a skeleton of a Berkovich space \cite{Berkovich-contraction,Hrushovski-Loeser}, and comes with a canonical retraction
\[
\boldp: X^\beth \rightarrow \oSigma(X)
\]
such that $\boldp^{-1}(\Sigma(X)) = X^\beth \cap U^\An$.  Here we use the notation $X^\an$ for the usual Berkovich analytic space of $X$, and  $X^\beth$ is the subset of $X^\an$ consisting of points over valued fields that extend to $\Spec$ of the valuation ring.  Such an extensions is unique when it exists, since varieties are separated.
Notice that if $X$ is proper  
$X^\An=X^\beth$, hence
$\boldp$ is a canonical retraction of $X^\An$ onto the skeleton $\oSigma(X)$ that maps $U^\An$ onto the cone complex $\Sigma(X)$. 
In Section~\ref{Sec:toroidalDM}, we extend these constructions to toroidal embeddings of Deligne--Mumford stacks.  This generalization is straightforward; no new ideas are needed.

\subsubsection{Logarithmic geometry.} The cone complex of \cite{KKMS} is reinterpreted in terms of monoids and ``fans" in the logarithmic setting in \cite[Section~9]{Kato}.  See also \cite[Appendix B]{Gross-Siebert}, where the complex associated to a logarithmic scheme $X$ is called the {\em tropicalization} of $X$.  

\subsubsection{Tropicalization.} Roughly speaking all of these fans, cone complexes, and skeletons are in some sense tropicalizations of the corresponding varieties.  Put another way, tropical geometry may be interpreted as the study of skeletons of Berkovich analytifications.  The exact relation between compactifications of subvarieties of tori and classical tropicalization is explained by the theory of \emph{geometric tropicalization}, due to Hacking, Keel, and Tevelev \cite{Tevelev07, HackingKeelTevelev09}.

We revisit the relations between tropicalization and skeletons of toroidal embeddings in more detail in Sections \ref{Sec:Thuillier} and \ref{Sec:toroidalDM}. 

\subsubsection{Recent progress} The tropicalization of toroidal structures discussed here was extended to logarithmic schemes in \cite{Ulirsch}. The paper \cite{CavalieriMarwigRanganathan} studies the tropicalization of Hurwitz schemes. Spaces of stable maps in a broad non-archimedean context, and their maps to spaces of tropical maps, are studied in \cite{Yu}. 

\subsection{Acknowledgements}
Thanks are due to 
M. Chan,
J. Denef,
J. Rabinoff,
T. Schlanck,
M. Temkin,
I. Tyomkin,
M. Ulirsch,
A. Vistoli,
F. Viviani, and 
J. Wise
for helpful conversations on the subject of this article. We thank the referees for a careful reading and many insightful comments.  The article was initiated during AGNES at MIT, Spring 2011; we thank J. McKernan and C. Xu for creating that opportunity.

\section{Extended and generalized cone complexes}

\label{Sec:complexes}

\subsection{Cones} 
A \emph{polyhedral cone with integral structure}
 $(\sigma,M)$ is a topological space $\sigma$, together with a finitely generated abelian group $M$ of continuous real-valued functions on $\sigma$, such that the induced map $\sigma \rightarrow \Hom(M, \RR)$ is a homeomorphism onto a strictly convex polyhedral cone in the real vector space dual to $M$.  The cone is \emph{rational} if its image is rational with respect to the dual lattice $\Hom(M, \ZZ)$.  A morphism of polyhedral cones with integral structure $(\sigma, M) \rightarrow (\sigma', M')$ is a continuous map from $\sigma$ to $\sigma'$ such that the pullback of any function in $M'$ is in $M$.  

 Throughout, all of the cones that we consider are rational polyhedral cones with integral structure, and we refer to them simply as \emph{cones}.  When no confusion seems possible, we write just $\sigma$ for the cone $(\sigma, M)$.

Let $\sigma$ be a cone, and let $S_\sigma$ be the monoid of linear functions $u \in M$ that are nonnegative on $\sigma$.  Then $\sigma$ is canonically identified with the space of monoid homomorphisms
\[
\sigma = \Hom(S_\sigma, \RR_{\geq 0}),
\]
where $\RR_{\geq 0}$ is taken with its additive monoid structure.  A \emph{face} of $\sigma$ is the subset $\tau$ where some linear function $u \in S_\sigma$ vanishes.  Each face inherits an integral structure, by restricting the functions in $M$. 

The category of cones with cone morphisms does not contain colimits; if we glue a cone to a cone along a morphism, the result is not necessarily a cone.  We now discuss cone complexes, an enlargement of the category of cones with cone morphisms in which one can glue cones along faces.

\subsection{Cone complexes} A {\em rational cone complex with integral structure} is a topological space together with a finite collection of closed subspaces and an identification of each of these closed subspaces with a rational cone with integral structure such that the intersection of any two cones is a union of faces of each.  In other words, it is a topological space presented as the colimit of a poset in the category of cones in which all arrows are isomorphisms onto proper faces.  See \cite[II.1]{KKMS} and \cite[Section~2]{tvbs} for further details.  All of the cones that we consider are rational with integral structure, so we simply call these spaces \emph{cone complexes}.  We refer to the faces of the cones in a complex as the \emph{faces} of the complex.

A morphism of cone complexes $f: \Sigma \rightarrow \Sigma'$ is a continuous map of topological spaces such that, for each cone $\sigma$ in $\Sigma$ there is a cone $\sigma'$ in $\Sigma'$ such that $f|_\sigma$ factors through a morphism of cones $\sigma \rightarrow \sigma'$.

\begin{remark} \label{rem:colimits}
Although we have described cone complexes topologically, one could give an equivalent description in categorical language, as follows.  Let $D$ be a poset in the category of cones in which each arrow is an isomorphism onto a proper face in the target.  The cone complex obtained by gluing the cones in $D$ along these proper face morphisms is the set-valued functor on the category of cones $\varinjlim \Hom( -, D)$.  The category of cone complexes is the full subcategory of the category of set-valued functors on cones consisting of such colimits.  Since the functor from cones to topological spaces is faithful, and topological spaces admit finite colimits, this functor extends naturally to a faithful functor from cone complexes to topological spaces, taking a cone complex $\varinjlim \Hom( -, D)$ to the topological colimit of the diagram of cones $D$.  In particular, morphisms of cone complexes in this functor category are determined by continuous maps of topological spaces constructed as colimits of finite diagrams of cones, as described above.

Similar remarks apply to our topological descriptions of the categories of generalized cone complexes and extended generalized cone complexes, described  in Section \ref{Sec:generalized} below.
\end{remark}

Note that cone complexes differ from the fans considered in the theory of toric varieties \cite[1.1]{Oda}, \cite[1.4]{FultonToric} in two essential ways.  First, unlike a fan, a cone complex does not come with any natural embedding in an ambient vector space.  Furthermore, while the intersection of any two cones in a fan is a face of each, the intersection of two cones in a cone complex may be a union of several faces.  The latter is similar to the distinction between simplicial complexes and $\Delta$-complexes in cellular topology.  See, for instance, \cite[Section~2.1]{Hatcher02}.

 Write $\sigma_i$  for the cones in $\Sigma$,  and $\sigma_i^\circ$ for the relative interiors.  By definition, $\sigma_i^\circ$ is the interior of $\sigma$ in $\Hom(M, \RR)$.  It is also the complement of the union of all faces of positive codimension in $\sigma$.  Every point in a cone $\sigma$ is contained in the relative interior of a unique face, and this generalizes to cone complexes in the evident way:

\begin{proposition}\label{Prop:complex-decomposition}
Let $\Sigma$ be a cone complex. Then $\Sigma = \sqcup \sigma_i^\circ$ is the disjoint union of the relative interiors of its faces.
\end{proposition}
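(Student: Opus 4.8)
The plan is to reduce the statement to the fact---recalled just before the proposition---that every point of a single cone lies in the relative interior of a unique face, exploiting the presentation of a cone complex as a gluing of cones along face morphisms (Remark~\ref{rem:colimits}). The covering part is immediate: the cones of $\Sigma$ cover $\Sigma$, each such cone is the disjoint union of the relative interiors of its faces by the single-cone statement, and every face of a cone of $\Sigma$ is by definition a face of $\Sigma$; hence every point of $\Sigma$ lies in the relative interior of at least one face.

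For disjointness, suppose $x \in \tau_1^\circ \cap \tau_2^\circ$, where $\tau_a$ is a face of a cone $\sigma_{i_a}$ of $\Sigma$ for $a = 1,2$; the goal is to show $\tau_1 = \tau_2$ as subsets of $\Sigma$. First I would locate a common face: since $x$ lies in $\sigma_{i_1} \cap \sigma_{i_2}$, and by the gluing description this intersection is a union of cones each of which is at once a face of $\sigma_{i_1}$ and a face of $\sigma_{i_2}$, there is such a common face $\rho$ containing $x$. Next, working inside the single cone $\sigma_{i_1}$, one observes that $\tau_1$---being the unique face of $\sigma_{i_1}$ whose relative interior contains $x$---is the smallest face of $\sigma_{i_1}$ containing $x$: applying the single-cone decomposition to $\rho$ and using that faces of a face of $\sigma_{i_1}$ are again faces of $\sigma_{i_1}$, any face of $\sigma_{i_1}$ through $x$ must contain $\tau_1$. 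In particular $\tau_1 \preceq \rho$, and symmetrically $\tau_2 \preceq \rho$. Now $\tau_1$ and $\tau_2$ are two faces of the single cone $\rho$, and $x$ lies in the relative interior of each (the relative interior of a cone being intrinsic to it, independent of any ambient cone); uniqueness for the single cone $\rho$ then forces $\tau_1 = \tau_2$, as desired.

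The only genuine obstacle is the bookkeeping needed to make ``$\tau_1$ as a face of $\sigma_{i_1}$'' and ``$\tau_2$ as a face of $\sigma_{i_2}$'' literally comparable subsets of $\Sigma$: this requires that the cones of $\Sigma$ be glued precisely along face morphisms, so that a common face $\rho$ of $\sigma_{i_1}$ and $\sigma_{i_2}$ is one and the same closed subspace of $\Sigma$, and it requires transitivity of the face relation in order to view $\tau_1$ and $\tau_2$ as faces of $\rho$. Both are provided by the colimit description of Remark~\ref{rem:colimits}; once the question has been transported into a single cone, there is nothing left to do.
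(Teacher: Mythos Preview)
Your argument is correct. Note, however, that the paper does not actually supply a proof of this proposition: it simply states that the single-cone fact ``generalizes to cone complexes in the evident way'' and records the proposition without proof. Your write-up is precisely the verification one would give to justify that evident generalization, using the defining property that the intersection of any two cones in the complex is a union of faces of each, together with transitivity of the face relation. So there is nothing to compare against; you have filled in the details the paper leaves to the reader, and done so along the expected lines.
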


\subsection{Extended cones}

Let $\sigma$ be a cone, with $S_\sigma \subset M$ the additive monoid of integral linear functions that are nonnegative on $\sigma$, so  $\sigma = \Hom(S_\sigma, \RR_{\geq 0})$.  The associated \emph{extended cone} is
\[
\osigma = \Hom(S_\sigma, \RR_{\geq 0} \sqcup \{\infty\}).
\]
It is a compact space containing $\sigma$ as a dense open subset.  If $\tau$ is a face of $\sigma$, then the closure of $\tau$ in $\osigma$ is canonically identified with the extended cone $\overline \tau$, and we refer to $\overline \tau$ as an \emph{extended face} of $\osigma$, since it is the extension of a face of $\sigma$. The complement $\osigma \smallsetminus \sigma$ is the union of the faces at infinity, defined as follows.

Let $\tau \succeq \tau'$ be faces of $\sigma$, and consider the locally closed subset $F(\tau,\tau')$ in $\osigma$, consisting of points $v$ such that, for $u \in S_\sigma$,
\begin{enumerate}
\item $\<u,v\>$ is finite if and only if $u$ vanishes on $\tau'$, and
\item $\< u, v \>$ is zero if and only if $u$ vanishes on $\tau$.
\end{enumerate}
The set $F(\tau, \tau')$ is naturally identified with the projection $\tau/\tau'$ of $\tau$ along the linear span of $\tau'$.  In particular, it is a cone of dimension $\dim \tau - \dim \tau'$, and we can speak of its faces and its relative interior accordingly.   Note that $F(\tau, 0)$ is $\tau$, and its closure $\overline{F}(\tau,0)$ is the extended face $\overline{\tau}$.  

If $\tau'$ is not zero, then $F(\tau, \tau')$ is disjoint from $\sigma$, and we refer to its closure $\overline{F}(\tau, \tau')$ as a \emph{face at infinity}.  We refer to both the extended faces and the faces at infinity as \emph{faces} of $\osigma$.

For arbitrary $\tau \succeq \tau'$, the face $\overline{F}(\tau,\tau')$ decomposes as a disjoint union
\[
\overline F(\tau, \tau') = \bigsqcup_{\tau \succeq \gamma \succeq \gamma' \succeq \tau'} F(\gamma, \gamma')^\circ,
\]
and is canonically identified with the extended cone $\overline{\tau/\tau'}$.  In particular, the extended cone $\osigma$ is the disjoint union of the relative interiors of the locally closed faces $F(\tau, \tau')$.

Note that $F(\tau, \tau)$ is a single point, for each face $\tau \preceq \sigma$.  There is a natural simplicial complex structure on the extended cone $\osigma$ with these points as vertices.  The maximal cells in this complex are the closures of the maximal cones in the barycentric subdivision of $\sigma$.  

A morphism of extended cones is a continuous map of topological spaces $f: \osigma \rightarrow \osigma'$ whose restriction to $\sigma$ is a cone morphism
\[
f|_\sigma: \sigma \rightarrow F(\gamma, \gamma'),
\]
for some pair of faces $\gamma \succeq \gamma'$ of $\sigma'$.  In particular, the inclusion of a face at infinity is a morphism of extended cones.

\subsection{Extended cone complexes}\label{Sec:extended}

The skeleton of a toroidal scheme without self-intersection is a topological space obtained by gluing extended cones along proper inclusions of extended faces.  

\begin{remark}
For the purposes of studying toroidal schemes and stacks, it is enough to consider spaces obtained by gluing  extended cones along extended faces, so this is the approach we follow.  For the study of more general spaces, such as the stable toric varieties appearing in \cite{Alexeev, HackingKeelTevelev06}, it would be natural to consider also complexes obtained by gluing extended cones along faces at infinity.
\end{remark}

An \emph{extended cone complex} is a topological space together with a finite collection of closed subspaces and an identification of each of these closed subspaces with an extended cone such that the intersection of any two of these extended cones is a union of extended faces of each.  A morphism of extended cone complexes is a continuous map of topological spaces $f: \oSigma \rightarrow \oSigma'$ such that, for each extended cone $\osigma$ in $\oSigma$, there is an extended cone $\osigma'$ in $\oSigma'$ such that $f|_\sigma$ factors through a morphism of extended cones $\osigma \rightarrow \osigma'$.  We refer to the extended faces and faces at infinity of the cones in $\oSigma$ as the extended faces and faces at infinity of $\oSigma$, respectively.

Note that the complement of the faces at infinity in an extended cone complex is a cone complex.  Conversely, to each cone complex $\Sigma$, we associate an extended cone complex $\oSigma$ obtained by gluing the extended cones $\osigma$ along the extended faces $\overline \tau$, whenever $\tau$ is a face of $\sigma$ in $\Sigma$.

\begin{example}
Let $\Sigma$ be the fan in $N_\RR$ corresponding to a toric variety $X$.  The extended cone complex $\oSigma$ is the skeleton of $X^\beth$, as studied in \cite{Thuillier}.  If $X$ is complete, then $\oSigma$ is also the extended tropicalization of $X$, as studied in \cite{Payne}.  For any fixed cone $\tau'$ in $\Sigma$, the cones $F(\tau, \tau')$ in $N_\RR / \mathrm{span}(\tau')$ form a fan corresponding to the closed torus invariant subvariety $V(\tau) \subset X$.

More generally, if $\Sigma$ is the cone complex corresponding to a toroidal variety $X$ without self-intersections, then the cones $F(\tau, \tau')$, for fixed $\tau'$, are naturally identified with the cones in the complex corresponding to the stratum $X_{\tau'}$, with its induced toroidal structure.
\end{example}

\begin{remark}
The preceding construction gives a faithful and essentially surjective functor from cone complexes to extended cone complexes, but this functor is not full.  There are additional morphisms of extended cone complexes that do not come from morphisms of cone complexes, sending extended faces in the domain into faces at infinity in the target.
\end{remark}

We write $\osigma^\circ$ for the complement of the union of all extended faces of positive codimension in the extended cone $\osigma$.  It is the union of the locally closed sets $F(\sigma, \tau)^\circ$ for $\tau \preceq \sigma$.  As in Proposition \ref{Prop:complex-decomposition}, an extended cone complex $\oSigma$ is the disjoint union over its extended faces $\osigma$ of the locally closed sets $\osigma^\circ$.

\subsection{Barycentric subdivisions.}
Each \emph{extremal ray}, or one-dimen\-sional face, of a cone $\sigma$ is spanned by a unique primitive generator, i.e. a point whose image in $\Hom(M, \RR)$ is a primitive lattice point in $\Hom(M, \ZZ)$.  The {\em barycenter} of $\sigma$ is the ray in its relative interior spanned by the sum of the primitive generators of extremal rays.  The iterated stellar subdivision along the barycenters of cones in $\Sigma$, from largest to smallest, produces the barycentric subdivision $B(\Sigma)$ of a cone complex $\Sigma$. See \cite[Example III.2.1]{KKMS}. The barycentric subdivision of any cone complex is simplicial, and isomorphic to a fan.  See \cite[Lemma 8.7]{Abramovich-Matsuki-Rashid}.

We define the \emph{barycentric subdivision $B(\oSigma)$ of the extended cone complex} $\oSigma$ to be the compact simplicial complex whose cells are the closures in $\oSigma$ of the cones in the barycentric subdivision of $\Sigma$.  Note that the barycentric subdivision $B(\oSigma)$ of $\oSigma$ is not the extended cone complex $\overline{B(\Sigma)}$ of the barycentric subdivision of $\Sigma$. For instance, if $\Sigma=\sigma=\RR_{\geq 0}^2$ is a single quadrant  the picture is given in Figure \ref{Fig:barycentric}.
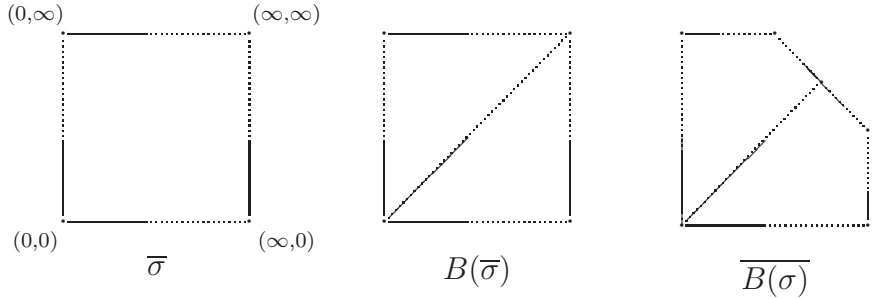
\begin{figure}[htb]
$$
\xymatrix@=0.15pc{*{\cdot}\ar@{-}[rrrr]^(-.3){(0,\infty)}\ar@{.}[rrrrrrrr] &&&&&&&&*{ \cdot}\ar@{}[r]^(1.6){(\infty,\infty)}\ar@{.}[dddddddd]&&&&&
\\ \\ \\ \\ &&&&&&&&\ar@{-}[dddd]\\ \\ \\ \\ 
*{ \cdot}\ar@{-}[uuuu]\ar@{.}[uuuuuuuu] 
\ar@{-}[rrrr]_(-.3){(0,0)}\ar@{.}[rrrrrrrr] &&&&&&&&
*{ \cdot}\ar@{}[r]_(1.6){(\infty,0)}&
\\
\ar@{}[rrrrrrrr]_{\textstyle \osigma}&&&&&&&&
}
\xymatrix@=0.15pc{*{ \cdot}\ar@{-}[rrrr]\ar@{.}[rrrrrrrr] &&&&&&&&*{ \cdot}\ar@{.}[dddddddd]\ar@{.}[ddddddddllllllll]&&&&
\\ \\ \\ \\ &&&&\ar@{-}[ddddllll]&&&&\ar@{-}[dddd]\\ \\ \\ \\ 
*{ \cdot}\ar@{-}[uuuu]\ar@{.}[uuuuuuuu]
\ar@{-}[rrrr]\ar@{.}[rrrrrrrr] &&&&&&&&
*{ \cdot}
\\
\ar@{}[rrrrrrrr]_{\textstyle B(\osigma)}&&&&&&&&
}
\xymatrix@=0.15pc{*{ \cdot}\ar@{-}[rr]\ar@{.}[rrrr] &&&&*{ \cdot}\ar@{.}[ddrr]&
\\ 
&&&&&\ar@{-}[dr]
\\
&&&&&&*{ \cdot}\ar@{{-}{-}{-}}[dr]\ar@{.}[drdr]\ar@{.}[ddddddllllll]\ar@{{-}{-}{-}}[lu] 
 \\&&&&&&&
  \\ &&&&\ar@{-}[ddddllll]&&&&*{ \cdot}\ar@{.}[dddd]
  \\ \\
  &&&&&&&&\ar@{-}[dd]
   \\ \\ 
*{ \cdot}\ar@{{-}{-}{-}}[uuuu]\ar@{.}[uuuuuuuu] 
\ar@{-}[rrrr]\ar@{.}[rrrrrrrr] &&&&&&&&
*{ \cdot}&
\\
\ar@{}[rrrrrrrr]_{\textstyle \overline{B(\sigma)}}&&&&&&&&
}
$$
\caption{Barycentric subdivisions}\label{Fig:barycentric}
\end{figure}



\subsection{Generalized cone complexes.}\label{Sec:generalized} In addition to cone complexes, we will consider spaces obtained as colimits of more general diagrams of cones, as follows.  A \emph{face morphism} of cones $\sigma \rightarrow \sigma'$ is an isomorphism onto a face of $\sigma'$.  We emphasize that the image of a face morphism is not required to be a proper face of the target, so any isomorphism of cones is a face morphism.

A {\em generalized cone complex} is a topological space with a presentation as the colimit of an arbitrary finite diagram of cones with face morphisms; we emphasize that the diagram need not be a poset.  Suppose $D$ and $D'$ are finite diagrams of cones with face morphisms, and let $\Sigma = \varinjlim D$ and $\Sigma' = \varinjlim D'$ be the corresponding generalized cone complexes.  A morphism of generalized cone complexes $f: \Sigma \to \Sigma'$ is a continuous map of topological spaces such that, for each cone $\sigma \in D$ there is a cone $\sigma'$ in $D'$ such that the induced map $\sigma \rightarrow \Sigma'$ factors through a cone morphism $\sigma \rightarrow \sigma'$.

\begin{remark}
By construction, the category of generalized cone complexes is an extension of the category of cone complexes that contains colimits for arbitrary finite diagrams of cones with face morphisms and comes with a faithful functor to topological spaces that commutes with such colimits; see Remark~\ref{rem:colimits}.  In particular, one can glue a cone to itself along isomorphic faces or take the quotient of a cone by a subgroup of its automorphism group in the category of generalized cone complexes.
\end{remark}

\noindent Similar objects were named  stacky fans in \cite{BMV, Chan-Melo-Viviani}, but that term is also standard for combinatorial data associated to toric stacks \cite{Borisov-Chen-Smith}.

If $\sigma$ is a cone in a finite diagram $D$ of cones with face morphisms, the image of the open cone $\sigma^\circ$ in the generalized cone complex $\Sigma = \varinjlim D$ is not necessarily homeomorphic to an open cone.  Nevertheless, the space underlying  a generalized cone complex has a natural cone complex structure, induced from the barycentric subdivisions of the cones in $D$.  We call this cone complex the \emph{barycentric subdivision} $B(\Sigma)$ of $\Sigma$. 
 
Similarly, a \emph{generalized extended cone complex} is a topological space with a presentation as a colimit of a finite diagram of extended cones in which each arrow is an isomorphism onto an extended face of the target.  

The functor from cones to extended cones generalizes to this setting, with the natural functor taking the colimit $\Sigma$ of a diagram of cones with face maps to the colimit $\oSigma$ of the corresponding diagram of extended cones.  The barycentric subdivision $B(\Sigma)$ induces a simplicial complex structure $B(\oSigma)$ on $\oSigma$, in which the maximal cells are the closures of the maximal cones in $B(\sigma)$.  Again, this is not the same as the extended complex $\overline{B(\Sigma)}$ of the barycentric subdivision of $\Sigma$ (Figure \ref{Fig:barycentric-quotient}).

\begin{figure}[htb]
$$
\xymatrix@=0.15pc{&&&&&&&&*{ \cdot}\ar@{.}[dddddddd]\ar@{{-}{--}{-}}[ddddddddllllllll]\ar@{}[r]^(1.6){(\infty,\infty)}&&&&&
\\ \\ \\ \\ &&&&&&&&\ar@{-}[dddd]\\ \\ \\ \\ 
*{ \cdot} 
\ar@{-}[rrrr]_(-.3){(0,0)}\ar@{.}[rrrrrrrr] &&&&&&&&
*{ \cdot}\ar@{}[r]_(1.6){(\infty,0)}&
\\
\ar@{}[rrrrrrrr]_{\textstyle \osigma\,\big/\,(\ZZ/2\ZZ)}&&&&&&&&
}
\xymatrix@=0.15pc{&&&&&&&&*{ \cdot}\ar@{.}[dddddddd]\ar@{.}[ddddddddllllllll]&&&&
\\ \\ \\ \\ &&&&\ar@{-}[ddddllll]&&&&\ar@{-}[dddd]\\ \\ \\ \\ 
*{ \cdot}
\ar@{-}[rrrr]\ar@{.}[rrrrrrrr] &&&&&&&&
*{ \cdot}
\\
\ar@{}[rrrrrrrr]_{\textstyle B\big(\osigma\,\big/\,(\ZZ/2\ZZ)\big)}&&&&&&&&
}
\xymatrix@=0.15pc{ &&&&&
\\ 
\\
&&&&&&*{ \cdot}\ar@{{-}{-}{-}}[dr]\ar@{.}[drdr]\ar@{.}[ddddddllllll]
 \\&&&&&&&
  \\ &&&&\ar@{-}[ddddllll]&&&&*{ \cdot}\ar@{.}[dddd]
  \\ \\
  &&&&&&&&\ar@{-}[dd]
   \\ \\ 
*{ \cdot}
\ar@{-}[rrrr]\ar@{.}[rrrrrrrr] &&&&&&&&
*{\cdot}&
\\
\ar@{}[rrrrrrrr]_{\textstyle \overline{B\big(\sigma\,\big/\,(\ZZ/2\ZZ)\big)}}&&&&&&&&
}
$$
\caption{The barycentric subdivision of an extended generalized cone complex is \emph{not} the extended cone complex of the barycentric subdivision. The dashed line on the left indicates folding.} 
\label{Fig:barycentric-quotient}
\end{figure}
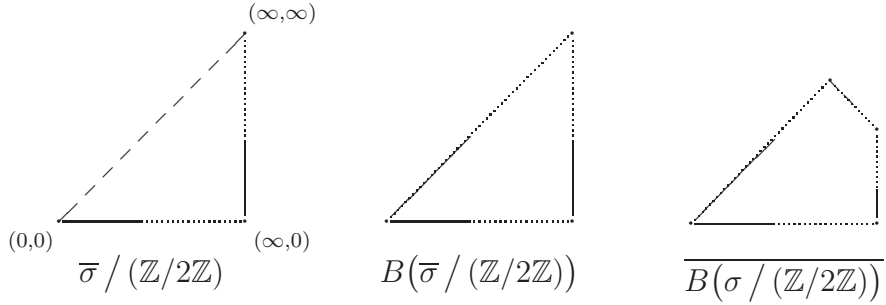

Some care is required to state the analogue of Proposition \ref{Prop:complex-decomposition} for generalized cone complexes and generalized extended cone complexes.  If $\Sigma = \varinjlim D$ is a generalized cone complex then there may be distinct cones in $D$ that are connected by isomorphisms, and there may also be arrows in $D$ that are nontrivial automorphisms.  We replace $D$ by a diagram with an isomorphic colimit in the category of generalized cone complexes, as follows.  First, add all faces of cones in $D$ with their associated inclusion maps to get a diagram $D'$.  Then choose a set of representatives $\{\sigma_i\}$ of the equivalence classes of cones in $D'$, under the equivalence relation generated by setting $\sigma \sim \sigma'$ if there is an isomorphism from $\sigma$ to $\sigma'$ in $D'$.  Now consider the diagram $D''$ whose objects are these representatives, and whose arrows are all possible maps obtained as compositions of arrows in $D'$ and their inverses.  Note that the set of self maps $\sigma_i \to \sigma_i$ in $D''$ is a subgroup $H_i$ of $\Aut(\sigma_i)$.  Then a point of $\Sigma$ is in the image of the relative interior $\sigma_i^\circ$ of a unique cone $\sigma_i$, and two points in  $\sigma_i^\circ$  have the same image if and only if they are identified by the diagram, namely they are in the same orbit of $H_i$.

We say that a finite diagram of cones with face morphisms is {\it reduced} if every face of a cone in the diagram is in the diagram, all isomorphisms are self-maps, and all compositions of arrows in the diagram are included.  The construction above shows that every generalized cone complex is the colimit of a reduced diagram of cones with face morphisms.  The correct analogue of Proposition \ref{Prop:complex-decomposition} is the following:

\begin{proposition}\label{Prop:generalized-decomposition}
Let� $\Sigma = \varinjlim D$ be a generalized cone complex.
\begin{enumerate}
\item There is a reduced diagram of cones with face morphisms $D^\mathrm{red}$ such that $\varinjlim D^\mathrm{red} \cong \Sigma$.
\item If $D$ is a reduced diagram of cones with face morphisms then
\[
\Sigma = \bigsqcup \sigma_i^\circ / H_i \mathrm{\ and \ } \oSigma = \bigsqcup \osigma_i /H_i,
\]
where the union is over all cones in $D$, and $H_i$ is the group of arrows from $\sigma_i$ to itself in $D$.
\end{enumerate}
\end{proposition}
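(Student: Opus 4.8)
The plan is to reduce Proposition~\ref{Prop:generalized-decomposition} to the already-established decomposition of ordinary cone complexes (Proposition~\ref{Prop:complex-decomposition}) and extended cone complexes (the remark following the definition of $\osigma^\circ$), together with a careful bookkeeping of the identifications forced by the diagram $D$.

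\medskip

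\noindent\textit{Part (1): existence of a reduced presentation.}
First I would simply verify that the construction sketched in the paragraph preceding the proposition — pass from $D$ to $D'$ by adjoining all faces and their inclusions; choose representatives $\{\sigma_i\}$ of the isomorphism classes of cones in $D'$; form $D''$ with objects the $\sigma_i$ and arrows all composites of arrows of $D'$ and their formal inverses that land among the representatives — produces a reduced diagram with the required colimit. The three defining conditions of ``reduced'' are immediate from the construction: every face of a representative is isomorphic to some representative, hence (after composing with the chosen identification) appears as a self-inclusion; every isomorphism in $D''$ goes from $\sigma_i$ to an isomorphic representative, which is $\sigma_i$ itself; and $D''$ is closed under composition by fiat. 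The only real content is that $\varinjlim D'' \cong \varinjlim D$ in the category of generalized cone complexes. This follows because $D \hookrightarrow D'$ is cofinal (adjoining faces does not change the colimit, as each face already maps into the colimit via its containing cone), and $D' \to D''$ induces an equivalence on colimits since we are only collapsing objects along isomorphisms already present in $D'$ and formally inverting face isomorphisms, neither of which alters the glued topological space. I would phrase this as: the colimit of a diagram is unchanged under (a) adding objects that are retracts of existing ones via maps already in the diagram, and (b) replacing the diagram by an equivalent one in the bicategorical sense.

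\medskip

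\noindent\textit{Part (2): the stratification.}
Assume now $D$ is reduced, with cones $\sigma_i$ and self-map groups $H_i \le \Aut(\sigma_i)$. Let $q\colon \bigsqcup_i \sigma_i \to \Sigma$ be the quotient map defining the colimit. I would first show that for each $i$ the restriction $q|_{\sigma_i^\circ}$ has image exactly the $H_i$-orbit space $\sigma_i^\circ/H_i$, i.e. two points of $\sigma_i^\circ$ are identified in $\Sigma$ iff they lie in one $H_i$-orbit. One direction is clear since $H_i$ consists of arrows in $D$. For the converse: any identification in $\Sigma$ between points of $\bigsqcup \sigma_j$ is generated by arrows of $D$; since all arrows of $D$ are face morphisms, an arrow $\sigma_j \to \sigma_k$ carries $\sigma_j^\circ$ into the relative interior of a face of $\sigma_k$, which is disjoint from $\sigma_k^\circ$ unless the arrow is an isomorphism — and by reducedness an isomorphism $\sigma_j \to \sigma_k$ forces $j=k$ and the arrow lies in $H_i$. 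Walking along a chain of such arrows and keeping track of which relative interior one is in (using that every point of a cone lies in the relative interior of a unique face, by Proposition~\ref{Prop:complex-decomposition} applied to a single cone), one sees that a chain connecting two points of $\sigma_i^\circ$ must consist entirely of isomorphisms, hence of elements of $H_i$. Next, the images $q(\sigma_i^\circ)$ for distinct $i$ (distinct as representatives) are disjoint, by the same analysis: an arrow between distinct representatives is a proper face morphism, so it cannot match an interior point of one with an interior point of another. Finally $\Sigma = \bigcup_i q(\sigma_i) = \bigcup_i \bigcup_{\tau \preceq \sigma_i} q(\tau^\circ)$, and since every face of a $\sigma_i$ is itself (isomorphic to) some representative $\sigma_j$ by reducedness, every $q(\tau^\circ)$ is some $q(\sigma_j^\circ) = \sigma_j^\circ/H_j$. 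This gives the disjoint-union decomposition $\Sigma = \bigsqcup_i \sigma_i^\circ/H_i$. The extended statement $\oSigma = \bigsqcup_i \osigma_i/H_i$ follows by the identical argument applied to the associated reduced diagram of \emph{extended} cones: the functor of Section~\ref{Sec:generalized} sends the reduced diagram $D$ of cones to a reduced diagram of extended cones (each face morphism goes to an extended-face morphism, isomorphisms stay isomorphisms, composites stay composites), and one decomposes $\osigma_i$ into the relative interiors $F(\tau,\tau')^\circ$ (as in the paragraph before Section~\ref{Sec:extended}) in place of the faces $\tau^\circ$; the same "arrows are proper face morphisms, so interiors don't collide" argument shows the only identifications within a fixed $\osigma_i$ come from $H_i$ and that distinct representatives contribute disjoint pieces. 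One subtlety to address: $H_i$ acts on $\osigma_i$ (automorphisms of $\sigma_i$ extend canonically to $\osigma_i$), and the decomposition of $\oSigma$ into the $\osigma_i/H_i$ is a decomposition into locally closed $H_i$-quotients of extended faces — one should note these quotients are exactly the pieces $\osigma_i^\circ/H_i$ glued along lower strata, matching the statement.

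\medskip

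\noindent\textit{Main obstacle.}
The genuine difficulty is the "converse" direction in Part (2): controlling the equivalence relation on $\bigsqcup_i \sigma_i$ generated by \emph{all} arrows of $D$ and showing it does not identify two distinct relative interiors, nor two non-$H_i$-related points of the same one. The cleanest way to pin this down is the observation that in a reduced diagram every arrow is either a proper face inclusion (which strictly lowers, in the poset sense, the carrying face of any interior point) or an element of some $H_i$, so any zig-zag of identifications, read off by tracking the unique minimal face containing each point, can be "straightened" to a product of elements of a single $H_i$. Making this straightening argument precise — essentially a normal-form / confluence statement for the groupoid-with-face-morphisms generated by $D$ — is where the care goes; everything else is formal colimit manipulation plus the single-cone decomposition results already in hand.
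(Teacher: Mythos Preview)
Your proposal is correct and follows essentially the same approach as the paper: the paper's ``proof'' is the construction in the paragraph immediately preceding the proposition (building $D'$ and $D''$ and asserting the orbit description), and you have simply fleshed out the details that the paper leaves implicit, including the zig-zag argument the paper compresses into a single sentence. If anything, your treatment of the converse direction in Part~(2) is more careful than the paper's, which states without further justification that two interior points are identified precisely when they lie in the same $H_i$-orbit.
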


\section{Algebraic curves, dual graphs, and moduli} \label{Sec:curves}

\subsection{Stable curves}

Fix an algebraically closed  field $k$. An {\em $n$-pointed nodal curve $(C;p_1,\ldots,p_n)$ of genus $g$} over $k$ is a  projective curve $C$ with arithmetic genus $g=g(C)$ over $k$ with only nodes as possible singularities, along with $n$ ordered distinct smooth points $p_i\in C(k)$. The curve is {\em stable} if it is connected and the automorphism group $\Aut(C,p_i)$ of $C$ fixing the points $p_i$ is finite. 

A curve $C$ over any field is said to be stable if the base change to the algebraic closure is stable.

\subsection{The dual graph of a pointed curve} \label{Sec:dualgraph}

The material here can be found with slightly different notation in \cite{Arbarello-Cornalba-Griffiths,C2}. See also \cite{BMV}.

Recall that to each $n$-pointed curve $(C;p_1,\ldots,p_n)$ with at most nodes as singularities over an algebraically closed field one assigns its  {\em weighted dual graph}, written somewhat succinctly as
$$\WG_C\ =\ \WG\ =\ (V, E, L, h),$$
where 
\begin{enumerate}
\item the set of vertices $V = V(\WG)$ is the set of irreducible components of $C$;
\item the set of edges  $E = E(\WG)$ is the set of nodes of $C$, where an edge $e\in E$ is incident to vertices $v_1,v_2$ if the corresponding node lies in the intersection of the corresponding components;
\item the {\em ordered} set of {\em legs} of  $L = L(\WG)$ correspond to the marked points, where a marking is incident  to the component on which it lies.
\item the function $h: V \to \NN$ is the genus function, where $h(v)$ is the geometric genus of the component corresponding to $v$.
\end{enumerate}
Note that a node of $C$ that is contained in only one irreducible component corresponds to a loop in $G_C$.

\begin{figure}[ht]
$$
\xymatrix@=0.5pc
{
&&&&&&&&&&\\
 && &     
*{\bullet}
\ar@{{-}{-}{-}}[lllu]_(0.8){l_1}_(0,1){1}|-(1.1){.}|-(1.2){.}|-(1.3){.}
\ar@{{-}{-}{-}}[llld]^(0.8){l_2}|-(1.1){.}|-(1.2){.}|-(1.3){.}   
\ar @{{-}{-}{-}} @/_.6pc/[rrrr] 
\ar@{{-}{-}{-}}@(dr,dl)
\ar@{{-}{-}{-}} @/^.6pc/[rrrr]
&&&&*{\bullet}\ar@{{-}{-}{-}}|-(1.1){.}|-(1.2){.}|-(1.3){.}  [rrrd]_(0.8){l_4}\ar@{{-}{-}{-}}|-(1.1){.}|-(1.2){.}|-(1.3){.}  [rrru]^(0.8){l_3}^(0,1){2}
\ar@{{-}{-}{-}}@(dr,dl)
\\
&&&&&&&&&&\\
}
$$\caption{A four-legged weighted graph of genus 6}\label{Fig:graph}
\end{figure}
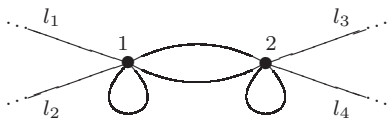

\begin{remark} As customary, the notation suppresses some data, which are nevertheless an essential part of $\WG$:
\begin{enumerate} \item The incidence relations between edges and vertices is omitted.
\item 
Consistently  
with \cite{BM,Arbarello-Cornalba-Griffiths,C2} 
we view an edge $e\in E(\WG)$ as a pair of distinct half-edges; the effect of this is that if $e$ is a loop, there is a nontrivial graph involution switching the two half edges of $e$.
\end{enumerate} 
When we talk about a {\em graph} we will always mean a weighted graph, unless we explicitly refer to the {\em underlying graph} of a weighted graph.
\end{remark}
The valence $n_v$  of a vertex $v\in V$ is the total number of incidences of edges and legs at $v$, where each loop contributes two incidences. The graph is said to be {\em stable} if it is  connected and satisfies the following: for every $v\in V$:
\begin{itemize} 
\item if $h(v) = 0$ then $n_v \geq 3$; and 
\item if $h(v) = 1$ then $n_v \geq 1$.
\end{itemize} 

\noindent Note that a pointed nodal curve $C$ is stable if and only if the graph $\WG_C$ is stable.

	The \emph{genus} $g(\WG)$ of a connected weighted graph is $$h^1(\WG,\QQ) + \sum_{v\in V} h(v),$$ and for a connected pointed nodal curve $C$ we have $$g(C) = g(\WG_C).$$ In essence this means that the weight $h(v)$ can be imagined as a replacement for $h(v)$ infinitely small loops hidden inside $v$, or even an arbitrary, infinitely small graph of genus $h(v)$ hidden inside $v$. 
	
	By the {\em automorphism group} $\Aut(\WG)$ of $\WG$ we mean the set of graph automorphisms preserving the ordering of the legs and the genus function $h$.

A {\em weighted graph contraction} $\pi:\WG \to \WG'$ is a contraction of the underlying graph  (composition of edge contractions), canonically endowed  with weight function $h'$  given by $$h'(v') = g(\pi^{-1}v'),$$ for $v'$ in $V(\WG')$.  Note that weighted graph contractions preserve the genus, and any contraction of a stable weighted graph is stable.

\subsection{Strata of the moduli space of curves} \label{Sec:strata} We consider the \emph{moduli stack} $\ocM_{g,n}$ and its \emph{coarse moduli space} $\oM_{g,n}$. The stack $\ocM_{g,n}$ is smooth and proper, and the boundary $\ocM_{g,n} \setminus \cM_{g,n}$ is a normal crossings divisor; this endows $\ocM_{g,n}$ with  a natural toroidal structure given by the open embedding $\cM_{g,n} \subset \ocM_{g,n}$. See \cite{Deligne-Mumford, Arbarello-Cornalba-Griffiths, Harris-Morrison} for generalites on moduli spaces, \cite{KKMS} for an introduction to toroidal embeddings, and Section \ref{Sec:toroidalDM} below for toroidal Deligne--Mumford stacks in general.

The toroidal structure on $\ocM_{g,n}$ induces a stratification, described as follows. Each stable graph $\WG$ of genus $g$ with $n$ legs  corresponds to a smooth, locally closed stratum $\cM_\WG\subset \ocM_{g,n}$.  The curves parametrized by $\cM_\WG$ are precisely those whose dual graph is isomorphic to $\WG$. The codimension of $\cM_\WG$ inside   $\ocM_{g,n}$ is the number of edges of $\WG$,  and 
$\cM_\WG$ is contained in the closure of $\cM_{\WG'}$ if and only if there is a graph contraction $\WG \to \WG'$.

\subsection{Explicit presentation of $\cM_\WG$} \label{sec:autg-cover} The stratum $\cM_\WG$ parametrizing curves with dual graph isomorphic to $\WG$ has the following explicit description in terms of moduli of smooth curves and graph automorphisms, see \cite[Section XII.10]{Arbarello-Cornalba-Griffiths}:

Recall that the valence of a vertex $v\in V(\WG)$ is denoted $n_v$. Consider
the moduli space $\widetilde \cM_\WG = \prod_v \cM_{h(v),n_v}$. The stack $\widetilde \cM_\WG$ can be thought of as the moduli stack of ``disconnected stable curves", where the universal family $\widetilde \cC^{dis}_\WG$ is the disjoint union of the pullbacks of the universal families $\cC_{h(v),n_v}\to \cM_{h(v),n_v}$. 
The disconnected curves parametrized by $\widetilde \cM_\WG$ have connected components corresponding to $V(\WG)$, no nodes,  and markings in the disjoint
union $\widetilde L = \sqcup_v\{p^v_1,\ldots,p^v_{n_v}\}$. The data of the graph $\WG$ indicate a gluing map $\widetilde \cC^{dis}_\WG \to \widetilde \cC_\WG$, and $\widetilde \cC_\WG\to \widetilde \cM_\WG$ is a family of connected curves, with irreducible components identified with $V(\WG)$, marked points  identified with $L(\WG)\subset \widetilde L$, nodes  identified with  $E(\WG)$ and branches of nodes identified with $\widetilde E = \widetilde L \setminus L(\WG)$.  Indeed, the family of glued curves exhibits $\widetilde \cM_\WG$ as the moduli space of curves with graph identified with $\WG$. There are sections $\widetilde \cM_\WG\to \widetilde \cC_\WG$ landing in the nodes, which are images of the sections $\widetilde \cM_\WG\to \widetilde \cC^{dis}_\WG$ determining the branches.

The group $\Aut(\WG)$ acts on $\widetilde \cC_\WG\to \widetilde \cM_\WG$ giving a map $[\widetilde \cM_\WG / \Aut(\WG)] \to \cM_\WG$ such that $[\widetilde \cC_\WG/\Aut(\WG)] \to [\widetilde \cM_\WG / \Aut(\WG)]$ is the pullback of the universal family  $\cC_\WG \to \cM_\WG$.

\begin{proposition}\label{Prop:stratum-quotient}
The quotient stack  $\left[\widetilde \cM_\WG / \Aut(\WG)\right]$ is canonically isomorphic to $\cM_\WG$, and $\left[\widetilde \cC_\WG/\Aut(\WG)\right]$ is canonically isomorphic to $\cC_\WG$.
\end{proposition}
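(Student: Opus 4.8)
The plan is to show that the natural morphism $q\colon \widetilde{\cM}_\WG \to \cM_\WG$ constructed above is an $\Aut(\WG)$-torsor; both assertions of the proposition then follow formally by descent. The first task is a clean moduli interpretation of $\widetilde{\cM}_\WG$. Rewriting $\prod_v \cM_{h(v),n_v}$ canonically as $\prod_v \cM_{h(v),H(v)}$, where $H(v)$ is the set of flags (half-edges and legs) of $\WG$ incident to $v$ — the passage between this and the indexing $\{p^v_1,\dots,p^v_{n_v}\}$ used in the text being a harmless, if non-canonical, relabelling — a $T$-point of $\widetilde{\cM}_\WG$ is a family $(C_v/T)_{v\in V(\WG)}$ of smooth stable curves with $C_v$ of genus $h(v)$ and marked points labelled by $H(v)$, and $\Aut(\WG)$ acts through its action on vertices and flags. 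The edge-pairing recorded by $\WG$ glues these along the corresponding sections into a connected nodal curve $C/T$ with $n$ ordered marked points, all of whose geometric fibres carry a canonical identification of the dual graph with $\WG$; stability of $\WG$ forces $C/T$ to be stable of genus $g$, and this family is the morphism $q$, which factors through the stratum $\cM_\WG\subset\ocM_{g,n}$ by the description in Section~\ref{Sec:strata}.

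Conversely, for a family $C/T$ lying in $\cM_\WG$ I would use that the relative singular locus $\Sing(C/T)$, its set of branches, and the set of connected components of the normalization $\widetilde{C}\to C$ all define finite \'etale covers of $T$ — each node having the standard local model $xy=t$ with $t$ restricting to $0$ along the stratum, so that the family is \'etale-locally on $T$ literally a gluing of families of smooth curves. Together with the marked points and their incidences these assemble into a locally constant sheaf of stable graphs on $T$, each geometric fibre of which is isomorphic to $\WG$. A lift of $C/T$ along $q$ is then the same datum as a trivialization of this sheaf, i.e. a $\WG$-marking of $C/T$, because gluing-according-to-$\WG$ and normalizing-at-the-nodes are mutually inverse, canonically in $C$.

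It follows immediately that $q$ admits sections \'etale-locally on any $T$ (trivialize the local system of graphs), hence is an \'etale surjection, and that two $\WG$-markings of one curve differ by a unique locally constant section of $\Aut(\WG)$, which gives an isomorphism $\widetilde{\cM}_\WG\times\Aut(\WG)\cong \widetilde{\cM}_\WG\times_{\cM_\WG}\widetilde{\cM}_\WG$ over the two projections; thus $q$ is an $\Aut(\WG)$-torsor and $[\widetilde{\cM}_\WG/\Aut(\WG)]\cong\cM_\WG$. The statement for universal curves follows by the same descent: $\widetilde{\cC}_\WG\to\widetilde{\cM}_\WG$ was built precisely as the gluing of the pulled-back universal families, hence is $\Aut(\WG)$-equivariantly identified with $\widetilde{\cM}_\WG\times_{\cM_\WG}\cC_\WG$, and the $\Aut(\WG)$-quotient of the pullback of $\cC_\WG$ along the torsor $q$ is canonically $\cC_\WG$ again.

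I expect the one genuinely substantive point to be the claim that, within the stratum, a family of stable pointed curves all of whose geometric dual graphs are isomorphic to $\WG$ is \'etale-locally a gluing of families of smooth pointed curves with constant combinatorics — equivalently, that $q$ is \'etale and the dual graph is a locally constant sheaf along $\cM_\WG$. Over a field this is classical; in families I would argue it directly from the local structure $xy=t$ of a node (with $t$ restricting to $0$ on the stratum), or simply invoke the explicit presentation of $\cM_\WG$ in \cite[Section~XII.10]{Arbarello-Cornalba-Griffiths}. The remaining bookkeeping — that an automorphism of a stable curve in $\cM_\WG$ is the same as a compatible pair consisting of automorphisms of its components together with an automorphism of $\WG$, matching the automorphism groups computed in $[\widetilde{\cM}_\WG/\Aut(\WG)]$ — is routine, and is in any case subsumed by the torsor statement.
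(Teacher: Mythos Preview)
Your argument is correct and is the natural direct proof: interpret $\widetilde\cM_\WG$ as the moduli of $\WG$-marked curves, observe that the dual-graph data form a locally constant sheaf of stable graphs along the stratum, and conclude that $q$ is an $\Aut(\WG)$-torsor. The universal-curve statement then follows by descent exactly as you say.

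The paper proceeds differently and more tersely. It invokes \cite[Proposition~XII.10.11]{Arbarello-Cornalba-Griffiths}, which identifies the \emph{compactified} quotient $\bigl[\,\overline{\widetilde\cM_\WG}\,/\,\Aut(\WG)\,\bigr]$ with the normalization of the closure $\overline{\cM_\WG}\subset\ocM_{g,n}$; since the open stratum $\cM_\WG$ is smooth, hence already normal, restricting to the open locus gives the desired isomorphism, and the universal-curve identification comes along for free. So the paper deduces the open statement from a stronger compactified one via a normality argument, whereas you prove the open statement directly from first principles. Your route is more self-contained and makes the torsor structure transparent; the paper's route is a one-line citation but depends on unpacking the content of ACG~XII.10.11, which is in the end essentially the argument you wrote out.
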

\begin{proof} In  \cite[Proposition XII.10.11]{Arbarello-Cornalba-Griffiths} one  obtains a description of the compactification $\left[\, \overline{\widetilde \cM_\WG}\, /\, \Aut(\WG)\,\right]$ as the {\em normalization} of the closure $\overline{\cM_\WG}$ of $\cM_\WG$ in $\ocM_{g,n}$.  Since the open moduli space $\cM_\WG$ is already normal, and since $\left[\widetilde \cC_\WG/\Aut(\WG)\right]$ is the universal family, the proposition follows.\end{proof}

\section{Tropical curves and their moduli}
 \subsection{Tropical curves and extended topical curves}

A {\em tropical curve} is a metric weighted graph $$\TC = (\WG,\ell) = (V,E,L,h,\ell),$$ where  $\ell: E \to \RR_{>0}$.  

One can realize a tropical curve as an ``extended'' metric space  (keeping the weights on the vertices) 
by realizing an edge $e$ as an interval of length $\ell(e)$,
$$\xymatrix{
*{{v_1^{\vphantom{x}}\,}\bullet}\ar@{-}[rrrr]_{\ell(e)} &&&& *{\bullet{\,\,v_2^{\vphantom{x}}}}  
}$$
 and realizing a leg as a copy of $\RR_{\geq 0} \sqcup \{\infty\}$ where  $0$ is attached to its incident vertex:
$$\xymatrix{
*{{v\,}\bullet} \ar@{-}[rrr] && \ar@{.}[rrrr]_(1){\infty} &&&&*{\scriptstyle{\bullet}}
}$$  
Note that the infinite point on a leg of a  tropical curve is a distinguished point which does not correspond to a vertex. Removing these infinite points gives a usual metric graph which is not compact.

We identify $\Aut(\TC) \subset \Aut(\WG)$ as the subgroup of symmetries preserving the length function  $\ell$.

An {\em extended tropical curve} is an {\em extended metric weighted graph} $\TC = (\WG,\ell) = (V,E,L,h,\ell)$, where this time $\ell: E\to \RR_{>0}\sqcup \{\infty\}$; we realize an extended tropical curve as an extended metric space by realizing an edge $e$ with $\ell(e) = \infty$ as 
$$\left(\RR_{\geq 0}\sqcup \{\infty\}\right) \cup \left(\{-\infty\}\sqcup \RR_{\leq 0}\right)$$ where the points at infinity are identified:
$$\xymatrix{
*{{v_1^{\vphantom{x}}\,}{\bullet}} \ar@{-}[rr]&&*{}  \ar@{.}[rrr]_(.33){\infty}&*{\scriptstyle{\bullet}}&*{}\ar@{-}[rr] && *{\bullet{\,\,v_2^{\vphantom{x}}.}}  
}$$
We again realize a leg of an extended tropical curve as a copy of $\RR_{\geq 0}\sqcup\{\infty\}$, with $0\in \RR_{\geq 0}\sqcup\{\infty\}$ attached to its incident vertex.

Two tropical curves are said to be (tropically)  equivalent if they can be obtained  from one another by adding or removing  $2$-valent vertices of weight $0$ (without altering the underlying metric space),  or   $1$-valent vertices of weight $0$  together with their adjacent edge.
When
studying moduli of tropical curves with $2g-2+n>0$ we can (and will) restrict our attention to curves whose
 underlying weighted   graph $\WG$ is stable, as in every equivalence class of tropical curves  there is a unique representative whose underlying weighted graph is stable, see \cite[Sect. 2]{C2} for details; sometimes   such tropical curves are referred to as ``stable".

\subsection{Moduli of tropical curves: fixed weighted graph}
 
 The open cone of dimension $|E|$
$$\sigma^\circ_\WG\ = \ (\RR_{> 0})^{E}$$
parametrizes tropical curves together with an identification of the underlying graph with $\WG$, where
each coordinate determines the length of the corresponding edge.  There is a natural universal family over $\sigma^\circ_\WG$ (see for instance \cite[Section~5]{LPP}), so we view it as a fine moduli space for tropical curves whose underlying graphs are identified with $\WG$.

\begin{figure}[htb]
$$
\xymatrix@=0.5pc
{
&&&&&&&&&&\\
&&&& *{\scriptstyle{\bullet}}\ar@{-}[dd]^{\TC_\ell}
\ar@{-}|-(1.05){.}|-(1.1){.}|-(1.15){.}|-(1.2){.}|-(1.25){.}|-(1.3){.}[rrrru]
\ar@{-}[lllld]&&&&&\\
*{\circ}\ar@{-}[rrrrd]\\
&&&& *{\scriptstyle{\bullet}}\ar@{-}|-(1.05){.}|-(1.1){.}|-(1.15){.}|-(1.2){.}|-(1.25){.}|-(1.3){.} [rrrrd]\\
&&&&\ar[dd] &&&&\\ 
&&&\\
&&&&&&&& \\
*{\circ}\ar@{-}[rrrr]&&&& *{\scriptstyle{\bullet}} \ar@{-}|-(1.2){......}[rrrr]_(0,1){\ell\in \RR_{>0}} &&&&&\\
 \\
}
$$
$$\text{Example:} \xymatrix@=0.5pc
{\WG=&*{\bullet}\ar@{-}[rr] && *{\bullet} &&&\sigma^\circ_\WG=\RR_{>0}.}$$
\end{figure}

Tropical curves whose underlying graphs are isomorphic to $\WG$ are parametrized by
\[
M^\trop_\WG = \sigma^\circ_\WG / \Aut (\WG),
\]
since the identification of the underlying graph with $\WG$ is defined only up to automorphisms of $\WG$.  Note that $M^\trop_\WG$ is not even homeomorphic to an open cone, in general.  However, the set $\sigma^\circ_\WG / \Aut(\WG)$ is naturally identified with a union of relative interiors of cones in a cone complex, as follows.  The action of $\Aut(\WG)$ permutes the cones in the barycentric subdivision of $\WG$, and no point is identified with any other point in the same cone.  Therefore, each cone in the barycentric subdivision $B(\sigma_\WG)$ is mapped bijectively onto its image in the quotient.  This induces a cone complex structure on
\[
B(\sigma_\WG) / \Aut(\WG).
\]
Then $\sigma^\circ_\WG /\Aut(\WG)$ is the union of the relative interiors of those cones in $B(\sigma_\WG) / \Aut(\WG)$ that are images of cones in $B(\sigma_\WG)$ meeting $\sigma^\circ_\WG$.  Note that in general the universal family of $\sigma^\circ_\WG$ does not descend to the quotient, but there is a natural complex over $\sigma_\WG/ \Aut(\WG)$ whose fiber over a point $[\TC]$ is canonically identified with $\TC/ \Aut(\TC)$.   See Section~\ref{Sec:forgetful}.

\begin{figure}[htb]
$$
\xymatrix@=0.5pc
{
*{\circ}\ar@{-}[rrrrd]\ar@{--}[rrrrrrrrr]
&&&& *{\scriptscriptstyle{}}\ar@{-}[d]^{\TC_\ell/\Aut(\TC_\ell)}&&&&&\\
&&&& *{\scriptstyle{\bullet}}\ar@{-}|-(1.05){.}|-(1.1){.}|-(1.15){.}|-(1.2){.}|-(1.25){.}|-(1.3){.}[rrrrd] \\
&&&&\ar[dd] &&&&\\ \\&&&&&&&& \\
*{\circ}\ar@{-}[rrrr]&&&& *{\scriptstyle{\bullet}} \ar@{-}|-(1.2){......}[rrrr]_(0,1){\ell\in \RR_{>0}}  &&&&\\
\\
}
$$
$$\xymatrix@=0.5pc
{\WG=&*{ \bullet}\ar@{-}[rr]_(0){h}_(1){h} && *{\bullet}&&&M^\trop_\WG=\RR_{>0}.}$$
 \end{figure}

Similarly, the extended cone $\osigma^\circ_\WG = (\RR_{> 0}\sqcup\{\infty\})^{E}$ is a fine moduli space for \emph{extended} tropical curves whose underlying graph is identified with $\WG$, and the quotient
\[
\oM_\WG^\trop = \osigma^\circ_\WG / \Aut(\WG)
\]
coarsely parametrizes extended tropical curves whose underlying graph is isomorphic to $\WG$.

\subsection{Moduli of tropical curves: varying graphs}\label{Sec:moduli-trop-glue}

Here we construct the moduli of tropical curves by taking the topological colimit of a natural diagram of cones, in which the arrows are induced by contractions of stable weighted graphs.  This approach is not original; it is quite similar, for instance, to the constructions in \cite{Kozlov2}.

As one passes to the boundary of the cone $\sigma^\circ_\WG$ parametrizing tropical curves whose underlying graph is identified with $\WG$, the lengths of some subset of the edges go to zero.  The closed cone $\sigma_\WG$ then parametrizes tropical curves whose underlying graph is identified with a \emph{weighted contraction} of $\WG$, as defined in Section~\ref{Sec:dualgraph}.  If $\varpi: \WG \rightarrow \WG'$ is a weighted contraction, then there is a canonical inclusion 
\[
\jmath_\varpi: \sigma_{\WG'} \hookrightarrow \sigma_{\WG}
\]
identifying $\sigma_{\WG'}$ with the face of $\sigma_{\WG}$ where all edges contracted by $\varpi$ have length zero. The cone $\sigma_\WG$ has a natural integral structure, determined by the integer lattice in $(\RR_{\geq 0})^{E}$ parametrizing tropical curves with integer edge-lengths.

As a topological space, the coarse moduli space of tropical curves $M_{g,n}^\trop$ is the colimit of the diagram of cones $\sigma_\WG$ obtained by gluing the cones $\sigma_{\WG}$ along the inclusions $\jmath_\varpi$ for all weighted contractions $\varpi$:
\begin{equation}
\label{Eq:limit} M_{g,n}^\trop = \varinjlim \left(\sigma_{\WG}, \jmath_\varpi\right).
\end{equation}
It is therefore canonically a generalized cone complex. Note that every automorphism of a weighted graph $\WG$ is a weighted contraction, so the map from $\sigma_\WG$ to the colimit $M_{g,n}^\trop$ factors through $\sigma_\WG / \Aut(\WG)$.  Furthermore, two points are identified in the colimit if and only if they are images of two points in some open cone $\sigma^\circ_\WG$ that differ by an automorphism of $\WG$.  Therefore, $M_{g,n}^\trop$ decomposes as a disjoint union
\[
M_{g,n}^\trop =  \bigsqcup_\WG M_\WG^\trop =  \bigsqcup_\WG \sigma_\WG^\circ/\Aut(\WG),
\]
over isomorphism classes of stable weighted graphs of genus $g$ with $n$ legs.  This is not a cell decomposition, but $M_{g,n}^\trop$ does carry a natural cone complex structure, induced from the barycentric subdivisions of the cones $\sigma_\WG$, in which each $M_\WG^\trop$ is a union of relative interiors of cones.  There is also a ``universal family," whose fiber over a point $[\TC]$ is canonically identified with $\TC/ \Aut(\TC)$.  Again, see Section~\ref{Sec:forgetful}.

Similarly, the coarse moduli space of extended tropical curves $\oM_{g,n}^\trop$ is the generalized extended cone complex 
\[
\oM_{g,n}^\trop = \varinjlim \left(\osigma_{\WG}, \jmath_\varpi\right),
\]
which decomposes as a disjoint union
\[
\oM_{g,n}^\trop =  \bigsqcup_\WG \oM_\WG^\trop =  \bigsqcup_\WG \osigma_\WG^\circ/\Aut(\WG).
\]

\begin{remark}
As in Section \ref{Sec:complexes}, while $M_{g,n}^\trop$ inherits a cone complex structure from its barycentric subdivision $B(M_{g,n}^\trop)$,  the compactification $\oM_{g,n}^\trop$ has a simplicial structure in the form $B(\oM_{g,n}^\trop)$, which is not the same as the associated extended cone complex $\overline{B(M_{g,n}^\trop)}$.
\end{remark}

\section{Thuillier's skeletons of toroidal schemes}\label{Sec:Thuillier}

Here we recall the basic properties of cone complexes associated to toroidal embeddings, and Thuillier's treatment of their natural compactifications as analytic skeletons.

\subsection{Thuillier's retraction}

We begin by briefly discussing Thuillier's construction of the extended cone complex and retraction associated to a toroidal scheme without self-intersection, an important variation on Berkovich's fundamental work on skeletons of pluristable formal schemes \cite{Berkovich-contraction}.  Recall that a toroidal scheme is a pair $U \subset X$ that \'etale locally looks like the inclusion of the dense torus in a toric variety: every point $p\in X$ has an \'etale neighborhood $\alpha:V \to X$ which admits an \'etale map $\beta:V \to V_\sigma$ to an affine 
toric variety, such that $\beta^{-1}T = \alpha^{-1} U$, where $T\subset V_\sigma$ is the dense torus.   It is a toroidal embedding  without self-intersection if each irreducible component of the boundary divisor $X \smallsetminus U$ is normal, in which case $V$ can be taken to be a Zariski open subset of $X$.  For further details, see  \cite{KKMS,Thuillier}.

\begin{remark}
Thuillier defines toroidal embeddings  in terms of \'etale charts, whereas in \cite{KKMS} they are defined in terms of formal completions. In \cite[IV.3.II, p. 195]{KKMS} the approaches are shown to be equivalent for toroidal embeddings without self-intersection. A short argument of Denef \cite{Denef} shows that the approaches are equivalent in general.
\end{remark}

We work over an algebraically closed field $k$, equipped with the trivial valuation, which sends $k^*$ to zero. The usual Berkovich analytic space associated with a variety $X$ over $k$ is denoted $X^\an$.  One also associates functorially another nonarchimedean analytic space in the sense of Berkovich, denoted $X^\beth$; here $\scriptstyle\beth$ is the Hebrew letter {\em bet}:  

\begin{definition}\label{Def:beth}
The space $X^\beth$ is the compact analytic domain in $X^\An$ whose $K$-points, for any valued extension $K | k$ with valuation ring $R \subset K$, are exactly those $K$-points of $X$ that extend to $\Spec R$.
\end{definition}

\noindent In particular, we have natural identifications 
\[
X^\beth(K) = X(R),
\]
for all such valued extensions.  If $X$ is proper, then every $K$-point of $X$ extends to $\Spec R$, and $X^\beth$ is equal to $X^\An$. See \cite[Sections 1.2, 1.3]{Thuillier}.

\begin{example}
Let $X$ be a toric variety with dense torus $T$, corresponding to a fan $\Sigma$ in $N_\RR$.  A $K$-point $x$ of $T$ extends to a point of $X$ over $\Spec R$ if and only if $\Trop(x)$ is contained in the fan $\Sigma$.  In other words, $X^\beth \cap T^\an$ is precisely the preimage of $\Sigma$ under the classical tropicalization map.  The extended tropicalization map from \cite{Payne} takes $X(K)$ into a partial compactification of $N_\RR$, and the closure of $\Sigma$ in this partial compactification is the extended cone complex $\oSigma$.  The preimage of $\oSigma$ under the extended tropicalization map is exactly $X^\beth$.
\end{example}

 Given a toroidal embedding $U \subset X$ over $k$, Thuillier defines a natural continuous, but not analytic, idempotent self-map 
 $\boldp_X:X^\beth \to X^\beth$.
 \begin{definition}
 The \emph{skeleton} $\oSigma(X) \subset X^\beth$ is the image of  the
 map
 $\boldp_X$.
 \end{definition}
 
\noindent The map  $\boldp_X$ is referred to as the {\it retraction} of $X^\beth$ to its skeleton; we write simply  $\boldp$ when no confusion seems possible.

If $U \subset X$ is a toroidal embedding without self-intersection then the image of $U^\an \cap X^\beth$ is canonically identified with the cone complex $\Sigma(X)$ associated to the toroidal embedding, as constructed in \cite{KKMS}.  Then $\oSigma(X)$ is the closure of $\Sigma(X)$ in $X^\beth$, and is canonically identified with the extended cone complex of $\Sigma(X)$. 
 The toroidal structure determines local monomial coordinates on each stratum of $X \smallsetminus U$, and the target of the retraction $\oSigma(X)$ is the space of monomial valuations in these local coordinates;  see \cite[Section~1.4]{BFJ} for details on monomial valuations.   Thuillier shows, furthermore, that $\boldp$ is naturally a deformation of the identity mapping on $X^\beth$, giving a canonical strong deformation retraction of $X^\beth$ onto $\oSigma(X)$. 
 
\begin{remark} \label{rem:strata}
There is a natural order reversing bijection between the strata of the boundary divisor $X \smallsetminus U$ and the cones in $\Sigma(X)$, generalizing the order reversing correspondence between cones in a fan and the boundary strata in the corresponding toric variety,  as follows.  Let $x$ be a point of $X^\beth$ over a valued extension field $K|k$ whose valuation ring is $R$.  Then $x$ is naturally identified with an $R$-point of $X$.  We write $\overline x$ for the reduction of $x$ over the residue field.  Then $\boldp(x)$ is contained in the relative interior $\sigma^\circ$ of a cone $\sigma$ if and only if the reduction $\overline x$ is in the corresponding locally closed boundary stratum $X_\sigma$ in $X$, over the residue field, and 
$x$ is in $U$.  In other words,  the preimage of $\sigma^\circ$ is the subset of $X^\beth \cap U^\An$ consisting of points over valued fields whose reduction lies in the corresponding stratum of $X \smallsetminus U$ over the residue field.
\end{remark}

\begin{remark}\label{rem:boundary}
The order reversing bijection between strata in $X \smallsetminus U$ and cones in $\Sigma(X)$ described in Remark~\ref{rem:strata}, which comes from the reduction map on $X^\beth$, should not be confused with the order preserving bijection between strata in $X \smallsetminus U$ and strata in $\oSigma(X) \smallsetminus \Sigma(X)$.  Quite simply, the preimage under $\boldp$ of a boundary stratum in $\oSigma(X) \smallsetminus \Sigma(X)$ is a stratum in the boundary divisor $(X \smallsetminus U)^\An$.
\end{remark}

\subsection{Explicit realization of the retraction} \label{sec:explicit-retraction}
In this section we describe Thuillier's retraction to the extended cone complex more explicitly in local coordinates, for a toroidal embedding $U\subset X$ without self-intersection.  

The toroidal scheme without self-intersections  $X$ is covered by Zariski open toric charts $$\xymatrix{V_\sigma& V\ar[l]_\beta \ar@{^(->}[r]^\alpha& X}.$$  Recall that the cone $\sigma$ can be described in terms of monoid homomorphisms, as follows.  Let $M$ be the group of Cartier divisors on $V$ supported in the complement of $U$, and let $S_\sigma \subset M$ be the submonoid of such Cartier divisors that are effective.  Then the cone $\sigma$ is the space of monoid homomorphisms to the additive monoid of nonnegative real numbers,
\[
\sigma = \Hom (S_\sigma, \RR_{\geq 0}),
\]
equipped with its natural structure as a rational polyhedral cone with integral structure.  The associated extended cone $\osigma$ is
\[
\osigma = \Hom(S_\sigma, \RR_{\geq 0} \sqcup \{\infty\}).
\]

Let $x$ be a point in $X^\beth$.  Then $x$ is represented by a point of $X$ over a valuation ring $R$ with valuation $\val$.  Let $\overline x$ be the reduction of $x$, which is a point of $X$ over the residue field of $R$, and let $V \subset X$ be an open subset that contains $\overline x$ and has a toric chart $V \rightarrow V_\sigma$.  Then $\boldp(x) \in \osigma$ is the monoid homomorphism that takes an effective Cartier divisor $D$ on $V$ with support in the complement of $U$ and local equation $f$ at $x$ to
\begin{equation} \label{Eq:description-p}
\boldp(x)(D) = \val(f). 
\end{equation}
This is clearly a monoid homomorphism to $\RR_{\geq 0} \sqcup \{\infty\}$, and it is nonnegative because $D$ is effective. It is also independent of the choice of chart, the choice of extension field over which $x$ is rational, and the choice of defining equation for $D$. See \cite[Lemma 2.8, Proposition 3.11]{Thuillier}.   This describes $\boldp$ as a projection to a natural extended cone complex; we now explain how this cone complex may be seen as a subset of $X^\beth$.

First, we may assume that the open set $V \subset X$ is affine. Then $V^\An$ is the space of valuations on the coordinate ring $k[V]$ that extend the given trivial valuation on $k$ \cite[Remark~3.4.2]{Berkovich}, and $V^\beth$ is the subspace of valuations that are nonnegative on all of $k[V]$.  Here, a valuation on a $k$-algebra $R$ is a map
\[
\val: R \rightarrow \RR \cup \{\infty\}
\]
such that $\val(fg) = \val(f) + \val(g)$ and $\val(f + g) \geq \min\{ \val(f), \val(g) \}$, for all $f$ and $g$ in $R$, $\val(0) = \infty$, and $\val(a) = 0$ for all $a \in k^\times$.  In particular, $\val(f)$ may be equal to $\infty$ for some nonzero $f \in R$.

Shrinking the toric variety $V_\sigma$, if necessary, we may assume that there is a point $x$ in $V$ mapping to a point $x_\sigma$ in the closed orbit $O_\sigma \subset V_\sigma$; note that the following construction is independent of the choice of $x$.  Since $V \rightarrow V_\sigma$ is \'etale, the completed local ring $\widehat \cO_{X,x}$ is identified with $\widehat \cO_{V_\sigma, x_\sigma}$, which is a formal power series ring
\[
k[[y_1, ..., y_r]][[S_\sigma]],
\]
with exponents in $S_\sigma$ and coefficients in a formal power series ring in $r$ parameters.  Here, $r$ is the dimension of $O_\sigma$.  For each function 
$f \in k[V]$, let $\sum_{u \in S_\sigma} a_u(f) z^u$ be the image of $f$ in this power series ring.  Then, to each point $v$ in $\osigma$, we associate the monomial valuation $\val_v: k[V] \rightarrow \RR \cup \{ \infty \}$ taking $f$ to
\begin{equation} \label{Eq:embedding-p}
\val_v(f) = \min \{ \langle u,v\rangle \ | \ a_u(f) \neq 0 \}.
\end{equation}
Since $v$ is in the extended dual cone of $S_\sigma$, the valuation  $\val_v$ is nonnegative on $k[V]$, so this construction realizes $\osigma$ as a subset of $V^\beth$.  As $V$ ranges over an open cover of $X$, the subsets $V^\beth$ cover $X^\beth$, and the union of the cones $\osigma$, one for each stratum in $X$, is the extended cone complex $\oSigma(X) \subset X^\beth$. 

\begin{remark}
From this description of $\boldp$, we see that $\boldp(x)$ is contained in the relative interior $\osigma^\circ$ 
of $\osigma$ as defined in Section \ref{Sec:extended} if and only if the reduction $\overline x$ is contained in the smallest stratum of $V$.  Also, $\boldp(x)$ is contained in the boundary $\osigma \smallsetminus \sigma$ if and only if $x$ itself is contained in the boundary of $V$.  These two properties of $\boldp$ determine both the order reversing correspondence between cones of $\oSigma(X)$ and strata of $X$, and the order preserving correspondence between boundary strata in $\oSigma(X) \smallsetminus \Sigma(X)$ and boundary strata in $X \smallsetminus U$, discussed in Remarks~\ref{rem:strata} and \ref{rem:boundary}, above.
\end{remark}

We recall that Thuillier also constructs a canonical homotopy
$H_V:V^\beth\times [0,1] \to V^\beth$, such that $$H_V
\times \{0\}=\text{id}_V\ :\ \ V^\beth \to V^\beth,$$ and
$$H_V \times \{1\} =  \boldp_V \ :\ \  V^\beth \to \oSigma_V,$$ giving a strong deformation retraction of $V^\beth$ onto the skeleton $\oSigma(V)\subset X^\beth$, and that this construction is functorial for  \'etale  morphisms of toroidal schemes. Our goal in  Section \ref{Sec:toroidalDM} is to show that a similar construction applies to toroidal Deligne--Mumford stacks. 

\subsection{Functoriality}

 A morphism $X \to Y$ of toroidal embeddings is {\em toroidal} if  for each $x\in X$ there is a commutative diagram 
  $$\xymatrix{V_\sigma\ar[d] & V_X \ar[l]\ar[r]\ar[d] & X\ar[d] \\ V_\tau& V_Y \ar[l]\ar[r] & Y
  }$$ 
  where the top row is a toric chart at $x$, the bottom row is a toric chart at $f(x)$, and the arrow $V_\sigma\to V_\tau$ is a {\em dominant} torus equivariant map of toric varieties; this is a so called toric chart for the morphism $X\to Y$. Toroidal morphisms were introduced in \cite[Section 1]{Abramovich-Karu}; separable toroidal morphisms coincide with the logarithmically smooth maps of \cite{KatoToric}. 
  
More generally, we say that a morphism $X\to Y$ as above is {\em sub-toroidal}
if     $V_{\sigma}\to V_{\tau}$ is only assumed to dominate a torus invariant subvariety of $V_{\tau}$.
A key example is when $X \to Y$ is the normalization of a closed toroidal stratum $X'\subset Y$. 

\begin{proposition} The formation of \  $\oSigma(X)$ is functorial for  sub-toroidal morphisms:
If $f:X \to Y$ is a  sub-toroidal  morphism of toroidal embeddings without self-intersections, and $f^\beth:X^\beth \to Y^\beth$ is the associated morphism of Berkovich spaces, then $f^\beth$ restricts to a map of generalized extended cone complexes $\oSigma(f): \oSigma(X) \to \oSigma(Y)$. In particular  $\boldp_Y\circ f^\beth =\oSigma(f) \circ \boldp_X$, and if  $Y \to Z$ is another  sub-toroidal  morphism, then $\oSigma(g)\circ\oSigma(f) = \oSigma(g\circ f)$.
\end{proposition}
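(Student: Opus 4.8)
The plan is to reduce the statement to the local, explicit description of the retraction $\boldp$ given in Section~\ref{sec:explicit-retraction}, via equation~\eqref{Eq:description-p}, and then to check that a toric (or sub-toric) chart for the morphism $f$ induces the desired map of extended cone complexes on the level of monoid homomorphisms. First I would fix a point $x \in X^\beth$, represented by a point of $X$ over a valuation ring $R$ with valuation $\val$, and a toric chart for $f$ at $x$, as in the definition of sub-toroidal morphism: a commutative diagram with top row $V_\sigma \leftarrow V_X \to X$, bottom row $V_\tau \leftarrow V_Y \to Y$, and $V_\sigma \to V_\tau$ a torus-equivariant map dominating a torus-invariant subvariety of $V_\tau$. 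Such a map corresponds to a monoid homomorphism $S_\tau \to S_\sigma$ (pullback of effective Cartier divisors supported off the boundary), and dualizing gives a morphism of extended cones $\osigma \to \otau$, hence a morphism on the barycentric subdivisions; this is the candidate for $\oSigma(f)$ locally.

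The key step is then to verify the commutation $\boldp_Y \circ f^\beth = \oSigma(f) \circ \boldp_X$ pointwise. For $x$ as above, $f^\beth(x)$ is the point of $Y$ over $R$ obtained by composing with $f$; its reduction lies in a stratum of $Y$ whose chart is $V_\tau$, and by~\eqref{Eq:description-p}, $\boldp_Y(f^\beth(x))$ sends an effective Cartier divisor $D$ on $V_Y$ with local equation $g$ to $\val(g)$. On the other hand, $\boldp_X(x)$ sends an effective Cartier divisor $D'$ on $V_X$ with local equation $f'$ to $\val(f')$, and $\oSigma(f)$ post-composed with it sends $D$ to $\boldp_X(x)$ applied to the pullback $f^*D$, whose local equation is the pullback of $g$. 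Since pullback of functions is compatible with the valuation $\val$ (both sides are just $\val$ of the same element of $R$, after identifying via the étale charts), the two agree. One needs to check independence of the choice of chart for the morphism — this follows from the independence already established for $\boldp_X$ and $\boldp_Y$ individually in \cite[Lemma 2.8, Proposition 3.11]{Thuillier}, together with the fact that any two charts for $f$ can be dominated by a common one — and that the resulting local maps glue to a global morphism of generalized extended cone complexes $\oSigma(f)\colon \oSigma(X)\to\oSigma(Y)$; gluing is automatic because the maps are all restrictions of the single continuous map $f^\beth$ and $\oSigma(X)$, $\oSigma(Y)$ are topological subspaces of $X^\beth$, $Y^\beth$. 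That $f^\beth$ does restrict to a map $\oSigma(X)\to\oSigma(Y)$, i.e.\ that $f^\beth$ carries the skeleton into the skeleton, is exactly the content of $\boldp_Y\circ f^\beth = \oSigma(f)\circ\boldp_X$ combined with idempotence of $\boldp_X$: $f^\beth(\oSigma(X)) = f^\beth(\boldp_X(X^\beth)) = \boldp_Y(f^\beth(\boldp_X(X^\beth))) \subseteq \oSigma(Y)$.

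For the composition identity $\oSigma(g)\circ\oSigma(f) = \oSigma(g\circ f)$, I would observe that $g\circ f$ is again sub-toroidal (a chart for $g\circ f$ is obtained by composing a chart for $f$ with a chart for $g$, after refining so that they are compatible, using that torus-equivariant maps dominating torus-invariant subvarieties are closed under composition), and that on the level of monoid homomorphisms the pullback $S_{\tau''} \to S_{\tau'} \to S_\sigma$ is manifestly the composite; dualizing gives the equality of the local models, and uniqueness of the global morphism restricting $ (g\circ f)^\beth$ finishes it. Alternatively, and more cleanly, since $\oSigma(f)$, $\oSigma(g)$, $\oSigma(g\circ f)$ are all the restrictions of $f^\beth$, $g^\beth$, $(g\circ f)^\beth = g^\beth\circ f^\beth$ to the respective skeletons, the identity is immediate once one knows each restricts as claimed. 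I expect the main obstacle to be the verification that $f^\beth$ does carry $\oSigma(X)$ into $\oSigma(Y)$ in the sub-toroidal (as opposed to strictly toroidal) case — i.e.\ handling the fact that the image cone $V_\sigma \to V_\tau$ need not be dominant onto $V_\tau$ but only onto a torus-invariant subvariety — and in keeping careful track of which stratum of $Y$ the reduction of $f^\beth(x)$ actually lands in, so that the correct chart $V_\tau$ (and hence the correct cone $\tau$, possibly a proper face or a deeper stratum than one might naively expect) is used in applying~\eqref{Eq:description-p}.
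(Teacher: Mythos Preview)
Your overall strategy matches the paper's: localize to a toric chart for $f$, build the cone map by dualizing the pullback of boundary Cartier divisors, and verify the commutation with $\boldp$ via equation~\eqref{Eq:description-p}. Two points need correction.

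First, your argument that $f^\beth$ carries $\oSigma(X)$ into $\oSigma(Y)$ is circular. From $\boldp_Y\circ f^\beth = \oSigma(f)\circ\boldp_X$ and idempotence of $\boldp_X$ you get, for $z\in\oSigma(X)$, only that $\boldp_Y(f^\beth(z)) = \oSigma(f)(z)$; the displayed chain $f^\beth(\boldp_X(X^\beth)) = \boldp_Y(f^\beth(\boldp_X(X^\beth)))$ asserts $\boldp_Y$ is the identity on $f^\beth(\oSigma(X))$, which is precisely the claim in question. The paper closes this gap not via idempotence but via equation~\eqref{Eq:embedding-p}: points of $\osigma\subset X^\beth$ are the monomial valuations, and one checks directly that $f^\beth$ applied to a monomial valuation is the monomial valuation in $\otau$ prescribed by the dual of $S_\tau\to S_\sigma$, i.e.\ $f^\beth|_{\osigma}=\oSigma(f)$.

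Second, in the genuinely sub-toroidal case the pullback homomorphism $S_\tau\to S_\sigma$ you write down does not exist: if $D\in S_\tau$ is a divisor containing $f(X)$, then $f^*D$ is not a Cartier divisor on $X$. You correctly flag this as the expected obstacle but do not resolve it. The paper's fix is to adjoin an absorbing element, replacing $S_\sigma$ by $S_\sigma' = S_\sigma\sqcup\{\infty\}$ and similarly for $S_\tau$, declaring $f^*D=\infty$ whenever $f(X)\subset D$. Dualizing then sends $\osigma$ to a face at infinity of $\otau$, and the rest of the argument goes through unchanged. Once these two repairs are made, your proof is essentially the paper's.
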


\begin{proof}
We first prove the result for toroidal morphisms, and then indicate the changes necessary for   sub-toroidal morphisms. 

The result is local, so we may assume there is a toric chart for $f$ which covers $X$ and $Y$, in which case $\oSigma(X)  =\osigma$ and $\oSigma(Y)  =\otau$ . If $S_\sigma$ and $S_\tau$ are the monoids of effective Cartier divisors on $X$ and $Y$ supported away from $U_X$ and $U_Y$, then we have a pullback homomorphism $S_\tau \to S_\sigma$, which is evidently compatible with   composition with a further toric morphism $Y \to Z$. This induces a map $\Sigma(f): \osigma \to \otau$ compatible with compositions. By Equation 
\eqref{Eq:description-p} of Section \ref{sec:explicit-retraction} we have   $\boldp_Y\circ f^\beth =\oSigma(f) \circ \boldp_X$. By Equation \eqref{Eq:embedding-p} we also have  $\oSigma(f) = f^\beth|_{\osigma}$, as needed.

For   sub-toroidal  morphisms we only need to replace $S_\sigma$ by $S_\sigma' :=S_\sigma\sqcup\{\infty\}$ and similarly for $S_\tau'$. We define $f^*: S_\tau' \to S_\sigma'$ by declaring that $f^*(\infty) = \infty$ and,  if $f(X) \subset D$ 
for some nonzero divisor $D\in S_\tau'$,
then also $f^*D = \infty$. The induced map $\osigma \to \otau$ maps $\osigma$ to a face at infinity of $\otau$ (see \cite[Proposition 2.13]{Thuillier}), and the rest of the proof works as stated. 
\end{proof}

\begin{remark}
If the map $V_\sigma \rightarrow V_\tau$ in the definitions of toroidal and sub-toroidal is only assumed to be torus equivariant, but not necessarily dominating a torus invariant subvariety, then the restriction of $f^\beth$ does not necessarily map $\oSigma(X)$ into $\oSigma(Y)$.  However, composing $f^\beth$ with $\boldp_Y$ still induces a map of generalized extended cone complexes, factoring through $\boldp_X$, and the argument above shows that this construction is functorial with respect to such morphisms.
\end{remark}

\section{Skeletons of toroidal Deligne--Mumford stacks} \label{Sec:toroidalDM}
Here we generalize Thuillier's retraction of the analytification of a toroidal
scheme onto its canonical skeleton  to the case of toroidal Deligne--Mumford
stacks. We follow the construction of \cite[Section 3.1.3]{Thuillier}, where toroidal embeddings with self-intersections are treated.

\subsection{Basic construction} Let $\cX$ be a separated connected Deligne--Mumford stack over $k$, with coarse moduli space $X$, which is a separated algebraic space, by \cite[Theorem~1.1]{KeelMori}. Let $U \subset \cX$ be an open substack and, for any morphism $V \rightarrow \cX$, let $U_V \subset V$ be the preimage of $U$.

\begin{definition} The inclusion $U \subset \cX$ is a {\em toroidal embedding of  Deligne--Mumford stacks}
  if, for every \'etale morphism from a scheme $V \to \cX$, the inclusion $U_V \subset V$ is a toroidal embedding of schemes.
  \end{definition}
  
When $U$ is understood, we refer to $\cX$ as a {\em toroidal
  Deligne--Mumford stack}. The property of being a toroidal embedding is \'etale local on schemes, so the inclusion $U \subset \cX$ is a toroidal embedding if and only if,  for a single \'etale covering $V \to \cX$, the embedding $U_V \subset V$ is toroidal.

Let $\cX$ be a toroidal Deligne--Mumford stack with coarse moduli space $X$, and  $V \to \cX$ 
an \'etale covering by a scheme, where 
$U_V \subset V$
 is a toroidal embedding without self-intersections. We write $V_2 = V\times_\cX V$. Then
$V_2 \double V \to X$ is a right-exact diagram of algebraic spaces.

The analytification $X^\an$ of a separated algebraic space $X$ is defined in \cite{Conrad-Temkin}. Using the same notation as in Definition  \ref{Def:beth}  we set the following:

\begin{definition}
Let $X$ be a separated algebraic space.  The space $X^\beth$ is the compact analytic domain in $X^\An$ whose $K$-points, for any valued extension $K | k$ with valuation ring $R \subset K$, are exactly those $K$-points of $X$ that extend to $\Spec R$.
\end{definition}

\begin{lemma}
The induced diagram $V_2^\beth \double V^\beth \to X^\beth$ is right-exact on the underlying topological spaces.
\end{lemma}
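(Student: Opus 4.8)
The plan is to reduce the claim to elementary facts about the Berkovich $\beth$-functor applied to an \'etale covering together with the fact that $V_2 \double V \to X$ is right-exact on the underlying topological spaces (which is part of the hypothesis that $X$ is the coarse space, or rather the algebraic-space quotient, of the \'etale groupoid). Concretely, we must show two things: (i) the map $V^\beth \to X^\beth$ is surjective on points, and (ii) two points of $V^\beth$ have the same image in $X^\beth$ if and only if they are identified by a point of $V_2^\beth$. Since right-exactness of a diagram of topological spaces $Z_2 \double Z_1 \to Z_0$ means precisely that $Z_0$ carries the quotient topology from $Z_1$ and that the set-theoretic coequalizer is $Z_0$, it then remains only to check that $V^\beth \to X^\beth$ is a \emph{topological quotient map}, i.e. a subset of $X^\beth$ is open (or closed) iff its preimage in $V^\beth$ is. Because $V^\beth$ and $X^\beth$ are compact (they are compact analytic domains, as in Definition~\ref{Def:beth}) and Hausdorff, any continuous surjection between them is automatically closed, hence a quotient map; so the topological point is actually the easy part once surjectivity is in hand.

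First I would establish surjectivity of $V^\beth \to X^\beth$ on $K$-points. A point of $X^\beth$ over a valued extension $K|k$ with valuation ring $R$ is, by definition, a morphism $\Spec R \to X$. Since $V \to X$ is an \'etale covering and $\Spec R$ is the spectrum of a (possibly rank $>1$) valuation ring, hence has a closed point, one lifts: the base change $V \times_X \Spec R \to \Spec R$ is an \'etale surjection of algebraic spaces, and over the henselian (indeed strictly henselian after a further unramified extension of $R$, which does not change the point of $X^\beth$ since such extensions are unique and invisible to $\beth$) local ring at the closed point it admits a section. Spreading that section out over all of $\Spec R$ — using that $\Spec R$ is local and $V\times_X\Spec R\to \Spec R$ is \'etale, so a section over the closed point extends uniquely — produces the desired $R$-point of $V$ lifting the given $R$-point of $X$. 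This shows $V^\beth(K) \to X^\beth(K)$ is surjective for every $K$, hence $V^\beth \to X^\beth$ is surjective on the underlying topological spaces (every point of a Berkovich space is represented over some valued field extension, and the formation of $\beth$-spaces is compatible with such extensions).

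Next I would identify the fibered structure. Given $R$-points $x_1, x_2 \colon \Spec R \to V$ with the same image in $X$, the two compositions $\Spec R \to V \times_X V = V_2$ agree, giving an $R$-point of $V_2$; conversely any $R$-point of $V_2$ yields two $R$-points of $V$ with common image in $X$. Thus on the level of $K$-points the coequalizer of $V_2^\beth \double V^\beth$ is exactly $X^\beth$, and since (as noted) this holds for all $K$ and every point of these Berkovich spaces is realized over some $K$, the set-theoretic coequalizer of the topological spaces $V_2^\beth \double V^\beth$ is $X^\beth$. Combined with the closed-map argument above — $V^\beth$ compact, $X^\beth$ Hausdorff, so the continuous surjection $V^\beth \to X^\beth$ is closed and hence a quotient map — we conclude that $V_2^\beth \double V^\beth \to X^\beth$ is right-exact on underlying topological spaces, as claimed.

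The main obstacle, and the step deserving the most care, is the lifting argument for surjectivity: one must be sure that \'etale covers of algebraic spaces can be split over $\Spec R$ for an arbitrary (possibly non-discrete, higher-rank) valuation ring $R$, and that enlarging $R$ by an unramified valued extension — which may be needed to make the relevant local ring strictly henselian and to realize the splitting — does not alter the point of $X^\beth$. The uniqueness of such extensions for separated targets, already invoked in the excerpt in the discussion of $X^\beth$, is exactly what is needed here; the key inputs are that a valuation ring is henselian along its closed point after strict henselization, that \'etale morphisms are formally \'etale so sections lift and extend uniquely over local bases, and that for algebraic spaces (not just schemes) \'etale surjections still admit such local sections, which holds because the question is \'etale-local on the target and reduces to the scheme case.
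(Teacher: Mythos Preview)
Your argument has a genuine gap: you conflate the stack $\cX$ with its coarse moduli space $X$. In the setup, $V \to \cX$ is an \'etale cover and $V_2 = V \times_\cX V$; the map $V \to X$ is the composite with the coarse moduli map $\cX \to X$ and is \emph{not} \'etale in general (think of $V = \AA^1 \to \cX = [\AA^1/\mu_2] \to X = \AA^1$, where $V \to X$ is $x \mapsto x^2$). So your surjectivity argument via strict henselization, which hinges on ``$V \to X$ is an \'etale covering,'' does not apply as written. Likewise, in step (ii) you write $V_2 = V \times_X V$, but this is false: two $R$-points of $V$ with the same image in $X$ give an $R$-point of $V \times_X V$, not of $V \times_\cX V$; the missing datum is an isomorphism in $\cX(R)$ between the two induced objects, which must be produced separately.

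The paper's proof supplies exactly the missing step. Given an $R$-point $x$ of $X$ with $K = \operatorname{Frac}(R)$ algebraically closed, one first lifts $x$ to $\tilde x \in \cX(R)$: surjectivity of $\cX(K) \to X(K)$ uses that $K$ is algebraically closed and $X$ is the coarse space, and the extension to $R$ uses properness of $\cX \to X$ (Keel--Mori). Only then does one lift along the genuinely \'etale map $V \to \cX$, by lifting the closed point and using flatness of $V \times_\cX \Spec R \to \Spec R$ to spread out. The same two-step procedure handles the equivalence relation: one must check that two lifts to $\cX(R)$ lying over the same point of $X(R)$ are isomorphic, which again needs the algebraically closed hypothesis on $K$ and properness (finiteness of Isom-schemes) to extend the isomorphism from $K$ to $R$. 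Your topological endgame---compact source, Hausdorff target, hence closed surjection, hence quotient map---is correct and in fact slightly slicker than the paper's version; the issue is purely in the set-theoretic lifting.
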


\begin{proof}
First we claim that this is a right exact diagram of sets. We use the description of the point-set of $X^\beth$ as the set of equivalence classes of $X(R)$, where $R$ runs over valuation rings extending $k$ with algebraically closed valued fraction fields $K$, and equivalence is determined by extensions of such valuation rings.   Let $R$ be such a valuation ring, and $x \in X(R)$. Since X is the coarse moduli space of $\cX$ and $K$ is algebraically closed, there is a lift  $\tilde x \in \cX(R)$ of $x$, unique up to isomorphisms.  Consider the base change along $\tilde x: \Spec R \rightarrow \cX$,
\[
(V_2 \times_\cX \Spec R) \, \double \, (V \times_\cX \Spec R) \, \to \Spec R
\]
The special fiber $\overline x$ of $x$ is a point of $X$ over the residue field $\kappa$ of $R$.  Since $X$ is the coarse moduli space of $\cX$ and $\kappa$ is algebraically closed, $X(\kappa)$ is the quotient of $V(\kappa)$ by the equivalence relation $V_2(\kappa)$.  In particular, $\overline x$ lifts to a point $\overline{y} \in V(\kappa)$.  Since $V \rightarrow \cX$ is flat, $(V \times_\cX \Spec R) \, \to \Spec R$ is also flat, and hence $\overline y$ is the special fiber of a point $y$ in $V(R)$.  Therefore, $x$ is in the image of $V(R)$, and $V^\beth \rightarrow X^\beth$ is surjective.  A similar argument shows that any pair of points in $V^\beth$ that map to the same point in $X^\beth$ are in the image of $V_2^\beth$, so $V_2^\beth \double V^\beth \to X^\beth$ is right exact on the underlying sets, as claimed

The topological spaces  $V_2^\beth$, $V^\beth$, and $X^\beth$ are all compact Hausdorff and the maps between them are continuous, since they are in fact analytic. The map $V^\beth \to X^\beth$ has finite fibers. If $Q$ is the topological quotient space of $V_2^\beth \double V^\beth$, then we have a continuous factorization $V^\beth \to Q \to  X^\beth$. So  $V^\beth \to Q$ also has finite fibers.  The equivalence relation determined by the image of $V^\beth_2$ in the topological product $V^\beth\times_{\mathfrak Top} V^\beth$ is closed, since $V_2^\beth$ is compact and $V^\beth$ is Hausdorff.   Since $Q$ is the quotient of the compact Hausdorff topological space $V^\beth$ by a closed equivalence relation with finite orbits it is also compact Hausdorff.  The continuous map $Q \to X^\beth$ is bijective by the previous paragraph. Since  $Q$ and $X^\beth$ are compact Hausdorff spaces, the map is a homeomorphism, as needed.
\end{proof}

\begin{prop}  \label{prop:Limit}
Let $\cX$ be a separated connected Deligne--Mumford stack with coarse moduli space $X$.  Then there is a canonical continuous map $H_\cX:[0,1] \times X^\beth \to
X^\beth$ connecting the identity to an idempotent self-map $\boldp_\cX$.
Writing $\oSigma(\cX)$ for the image of $\boldp_\cX$, we have a
commutative diagram of topological spaces

$$\xymatrix{V_2^\beth \doubleright \ar[d]_{\boldp_{V_2}} & V^\beth \ar[d]^{\boldp_V}\ar[r] & X^\beth\ar[d]^{\boldp_\cX} \\
                      \oSigma(V_2) \doubleright                               & \oSigma(V)\ar[r] & \oSigma(\cX),
                      }
                      $$
with right exact rows. In particular, $X^\beth$ is contractible and $\oSigma(\cX)$ is the topological colimit of the diagram $\oSigma(V_2) \double \oSigma(V)$. 
\end{prop}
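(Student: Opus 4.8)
The plan is to construct $\boldp_\cX$ and $H_\cX$ by descending Thuillier's canonical retraction and homotopy along the quotient map $q\colon V^\beth\to X^\beth$ provided by the preceding Lemma, and then to extract the commutative diagram and the exactness of its bottom row from the functoriality of Thuillier's construction for \'etale morphisms. To set up the data: since $U_V\subset V$ is a toroidal embedding without self-intersections, \cite[Section~3.1.2]{Thuillier} supplies $\boldp_V\colon V^\beth\to\oSigma(V)$ and the canonical homotopy $H_V\colon[0,1]\times V^\beth\to V^\beth$ with $H_V(0,-)=\id$ and $H_V(1,-)=\boldp_V$. The scheme $V_2=V\times_\cX V$ is \'etale over $V$, hence toroidal, but possibly \emph{with} self-intersections, so I would invoke Thuillier's general construction \cite[Section~3.1.3]{Thuillier} to get $\oSigma(V_2)$, $\boldp_{V_2}$, $H_{V_2}$ with the same formal properties, together with their functoriality for the two \'etale projections $p_1,p_2\colon V_2\to V$:
\[
\boldp_V\circ p_i^\beth=\oSigma(p_i)\circ\boldp_{V_2},\qquad p_i^\beth\circ H_{V_2}=H_V\circ(\id_{[0,1]}\times p_i^\beth),\qquad i=1,2 .
\]
Finally $q\circ p_1^\beth=q\circ p_2^\beth$ (both are the analytification of $V_2\to\cX\to X$), and by the preceding Lemma $q$ is a continuous surjection of compact Hausdorff spaces whose fibres are exactly the classes of the equivalence relation $V_2^\beth\double V^\beth$; in particular $q$ and $\id_{[0,1]}\times q$ are (closed) quotient maps.

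Next I would descend $H_V$. The key point is that $q\circ H_V\colon[0,1]\times V^\beth\to X^\beth$ is constant on the fibres of $\id_{[0,1]}\times q$: given $v,v'\in V^\beth$ with $q(v)=q(v')$, the Lemma gives $w\in V_2^\beth$ with $p_1^\beth(w)=v$, $p_2^\beth(w)=v'$, and then
\[
q\bigl(H_V(t,v)\bigr)=q\bigl(p_1^\beth H_{V_2}(t,w)\bigr)=q\bigl(p_2^\beth H_{V_2}(t,w)\bigr)=q\bigl(H_V(t,v')\bigr)
\]
for every $t$, using the functoriality of $H$ and $q\circ p_1^\beth=q\circ p_2^\beth$. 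Since $\id_{[0,1]}\times q$ is a quotient map, $q\circ H_V$ factors uniquely through a continuous $H_\cX\colon[0,1]\times X^\beth\to X^\beth$ with $H_\cX\circ(\id\times q)=q\circ H_V$. Set $\boldp_\cX:=H_\cX(1,-)$; then $\boldp_\cX\circ q=q\circ\boldp_V$, and $H_\cX(0,-)=\id$ because $q$ is epic. Idempotence follows from that of $\boldp_V$, since $\boldp_\cX\circ\boldp_\cX\circ q=\boldp_\cX\circ q\circ\boldp_V=q\circ\boldp_V\circ\boldp_V=q\circ\boldp_V=\boldp_\cX\circ q$ and $q$ is epic. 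Canonicity (independence of the cover $V$) would be checked by comparing two covers through their fibre product over $\cX$ and using the \'etale functoriality of Thuillier's $H$.

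With $\oSigma(\cX):=\boldp_\cX(X^\beth)$, I would take the bottom horizontal arrows to be $\oSigma(p_1),\oSigma(p_2)\colon\oSigma(V_2)\double\oSigma(V)$ and the map $\oSigma(V)\to\oSigma(\cX)$ to be $q|_{\oSigma(V)}$; it lands in $\oSigma(\cX)$ because $q\circ\boldp_V=\boldp_\cX\circ q$, and is surjective since $\oSigma(\cX)=\im(\boldp_\cX q)=\im(q\,\boldp_V)=q(\oSigma(V))$. The left square commutes by the functoriality identities above and the right square by $\boldp_\cX\circ q=q\circ\boldp_V$, so the diagram commutes, and $\oSigma(\cX)$ acquires the structure of a generalized extended cone complex from $\oSigma(V_2)\double\oSigma(V)$. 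As $q|_{\oSigma(V)}$ is a continuous surjection of compact Hausdorff spaces it is a quotient map, so it suffices to identify its fibres: if $s,s'\in\oSigma(V)$ with $q(s)=q(s')$, lift $(s,s')$ to $w\in V_2^\beth$ and put $\widetilde w=\boldp_{V_2}(w)$; since $s=\boldp_V(s)$ and $s'=\boldp_V(s')$ (as $\boldp_V$ is idempotent with image $\oSigma(V)$), the functoriality of $\boldp$ gives $\oSigma(p_1)(\widetilde w)=\boldp_V(s)=s$ and $\oSigma(p_2)(\widetilde w)=s'$. Hence $q|_{\oSigma(V)}$ identifies precisely the points related by $\oSigma(p_1)$ and $\oSigma(p_2)$, so $\oSigma(\cX)$ is the topological coequalizer of $\oSigma(V_2)\double\oSigma(V)$. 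Finally $H_\cX$ is a strong deformation retraction of $X^\beth$ onto $\oSigma(\cX)$; combined with the contractibility of $\oSigma(\cX)$ — each $\oSigma(V)$ is a ``cone'' with apex the origin corresponding to the dense open $U_V$, its barycentric subdivision carrying that vertex in every maximal cell, and this contraction descends along the origin-preserving quotient $q|_{\oSigma(V)}$ — this shows $X^\beth$ is contractible.

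The technical heart is the first two paragraphs: one needs Thuillier's retraction, homotopy, \emph{and} their \'etale functoriality available for $V_2$, which is only toroidal with self-intersections, so the construction must genuinely be bootstrapped from \cite[Section~3.1.3]{Thuillier} rather than from the self-intersection-free case of Section~\ref{Sec:Thuillier}; and the continuity of the descended $H_\cX$ rests on $\id_{[0,1]}\times q$ being a quotient map, which is exactly what the compactness and Hausdorffness bookkeeping of the preceding Lemma delivers. The contractibility of $\oSigma(\cX)$ as a quotient of the contractible $\oSigma(V)$ is a minor additional point I would spell out via the cone-point structure of barycentric subdivisions.
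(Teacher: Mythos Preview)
Your proof is correct and follows essentially the same strategy as the paper: descend Thuillier's homotopy $H_V$ along the topological quotient $V^\beth\to X^\beth$ furnished by the preceding Lemma, using the \'etale functoriality of Thuillier's construction (the paper cites \cite[Lemma~3.38]{Thuillier} for this). You are more explicit than the paper about the quotient-map mechanics and about why $\oSigma(\cX)$ is the coequalizer of $\oSigma(V_2)\double\oSigma(V)$, which is helpful.

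The one genuine difference is the contractibility of $\oSigma(\cX)$. You argue by descending a cone-to-origin contraction from $\oSigma(V)$; the paper instead observes directly that the barycentric-subdivision cell structure on $\oSigma(\cX)$ realizes it as the order complex of the face poset of $\Sigma(\cX)$, which has a unique minimal element (the zero cone) because $\cX$ is connected, and the order complex of any poset with a minimal element is a cone on the corresponding vertex. The paper's route is slightly cleaner because it sidesteps the check that your contraction on (the possibly disconnected) $\oSigma(V)$ is compatible with the identifications coming from $\oSigma(V_2)$; your route works once you note that the maps $\oSigma(p_i)$ are simplicial on barycentric subdivisions and carry origins to origins, so the straight-line contractions to the origin in each component are intertwined and all origins land on the single origin of $\oSigma(\cX)$.
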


\begin{proof}
Since $V^\beth_2 \double V^\beth \to X^\beth$ is topologically right exact it follows that the continuous map $H_X:[0,1]\times X^\beth \to X^\beth$ making the diagram 
$$\xymatrix{[0,1]\times {V}^\beth \ar[r] \ar[d]_{H_{V}} & [0,1]\times X^\beth \ar[d]^{H_X} \\
                      {V}^\beth \ar[r]                               & X^\beth
                      }
                      $$
commutative exists and is unique. To check that the map is independent of the  choice of $V$ it suffices to consider a different \'etale cover by a scheme $V' \rightarrow \cX$ that factors through $V \to \cX$. This induces a map $H'_X:[0,1]\times X^\beth \to X^\beth$ commuting with $H_{V'}$ as in the preceding diagram,
and a map $\boldp'_\cX:X^{\beth}\to \oSigma(\cX)$ commuting with $\boldp_{V'}$ as in the first diagram.
  By \cite[Lemma 3.38]{Thuillier} we also have a commutative diagram 
$$\xymatrix{[0,1]\times {V'}^\beth \ar[r] \ar[d]_{H_{V'}} & [0,1]\times V^\beth \ar[d]^{H_V} \\
                      {V'}^\beth \ar[r]                               & V^\beth.
                      }
                      $$
                      It follows that $H_X = H_X'$, 
and necessarily $\boldp'_\cX = \boldp_\cX$.

This shows that $X^\beth$ admits a deformation retraction onto $\oSigma(\cX)$.  It remains to show that $\oSigma(\cX)$ is contractible.  The natural cell complex structure on $\oSigma(\cX)$ has one vertex for each face of $\Sigma(\cX)$, and one $k$-face for each chain of length $k$ in the poset of faces.  In other words, $\oSigma(\cX)$ is homeomorphic to the geometric realization of the poset of faces of $\Sigma(\cX)$.  As $\cX$ is connected this poset has a minimal element, the unique zero-dimensional face of $\Sigma(\cX)$; its geometric realization is contractible, as required.
\end{proof}

\begin{remark}
One can define  $\oSigma(\cX)$ as the topological colimit of the diagram $\oSigma(V_2) \double \oSigma(V)$, describe it explicitly as in Proposition \ref{Prop:decomposition} and prove its functorial properties as in Proposition \ref{Prop:functoriality}, without recourse to Berkovich analytification, similarly to the work of \cite{KKMS} or \cite{Gross-Siebert}. However Part (2) of Theorem \ref{Th:functor}  requires the analytic context. 
\end{remark}

\begin{remark}
Citing a similar argument, Thuillier notes that the skeleton $\oSigma(X)$ of a toroidal embedding with self-intersection inherits the structure of a cell complex.  We mention one minor gap in the proof of this claim, which is easily corrected.  Lemma~3.33 in \cite{Thuillier} states that every cone of $\Sigma(V)$ maps isomorphically to its image in $\Sigma(X)$, which is not true in general, as seen in Example~\ref{ex:Monodromy}. Nevertheless, Thuillier's argument does show that cones of the barycentric subdivision of $\Sigma(V)$ map isomorphically to their images in $\Sigma(X)$, so the topological space $\Sigma(X)$ inherits the structure of a cone complex, induced from this barycentric subdivision.  The same holds for a toroidal Deligne--Mumford stack $\cX$: the skeleton $\oSigma(\cX)$ inherits a simplicial complex structure induced from the barycentric subdivision of $\Sigma(V)$.
\end{remark}

\begin{example} \label{ex:Monodromy}
Consider  $X = \AA^3\setminus \{y=0\}$ and let $U$ be the complement of the divisor  $$D = \{x^2y - z^2 = 0 \}.$$ Since $D$ is a normal crossings divisor, the inclusion $U \subset X$ is a toroidal embedding. Note that $D$ is irreducible but not normal, so $X$ is a toroidal variety with self-intersection.  Moreover, the preimage of the singular locus in the normalization of $D$ is irreducible.  Roughly speaking, this is because the fundamental group of the singular locus acts transitively on the branches of $D$ at the base point.  This phenomenon of \emph{monodromy} is discussed systematically in Section~\ref{Sec:monodromy}.

We now consider a toric chart on $X$.  Let  $ V = \AA^3\setminus \{u=0\}$ and  $$D_V=V(x^2u^2 - z^2)\ \subset\  V,$$ there is a degree 2 \'etale cover $V \to X$ given by $u^2=y$.  Now $V$ is a toroidal embedding without self-intersections.  The coordinate change $z_1=z/u$ gives the equation $x^2 = z_1^2$, so $V$ is isomorphic to $\AA^2 \times \GG_m$ with its standard toric structure - the toric divisors on $V$ are $\{x=z_1\}$ and $\{x=-z_1\}$.  The corresponding cone is $\sigma = \RR_{\geq 0}^2$.  The group $\ZZ/2\ZZ$ acts freely on $V$, by interchanging the sheets of the \'etale cover, with quotient $X$; the involution sends $(x,u,z_1)$ to $(x,-u,-z_1)$.  The fiber product $V_2 = V \times_X V$ is therefore $V \times \ZZ/2\ZZ$, and the \'etale equivalence relation $V_2 \double V$ has quotient $X$.  Now the skeleton $\oSigma(V_2)$ is $\osigma \times \ZZ/2\ZZ$, and the equivalence relation $\oSigma(V_2) \double \oSigma(V)$ identifies $\sigma$ with itself by the reflection that
  switches the two coordinates, since the toric divisors $\{x=z_1\}$ and $\{x=-z_1\}$ are interchanged by the involution.  It follows that $\oSigma(X)$ is the quotient $\osigma/(\ZZ/2\ZZ)$.  
$$
\xymatrix@=0.15pc{*{\cdot}\ar@{-}[rrrr]^(-.3){(0,\infty)}\ar@{.}[rrrrrrrr] &&&&&&&&*{\cdot}\ar@{}[r]^(1.6){(\infty,\infty)}\ar@{.}[dddddddd]&&&&&
\\ \\ \\ \\ &&&&&&&&\ar@{-}[dddd]\\ \\ \\ \\ 
*{\cdot}\ar@{-}[uuuu]\ar@{.}[uuuuuuuu] 
\ar@{-}[rrrr]_(-.3){(0,0)}\ar@{.}[rrrrrrrr] &&&&&&&&
*{\cdot}\ar@{}[r]_(1.6){(\infty,0)}&
\\
\ar@{}[rrrrrrrr]_{\textstyle \oSigma(V)=\osigma}&&&&&&&&
}
\xymatrix@=0.15pc{&&&&&&&&*{\cdot}\ar@{.}[dddddddd]\ar@{{-}{--}{-}}[ddddddddllllllll]\ar@{}[r]^(1.6){(\infty,\infty)}&&&&&
\\ \\ \\ \\ &&&&&&&&\ar@{-}[dddd]\\ \\ \\ \\ 
*{\cdot} 
\ar@{-}[rrrr]_(-.3){(0,0)}\ar@{.}[rrrrrrrr] &&&&&&&&
*{\cdot}\ar@{}[r]_(1.6){(\infty,0)}&
\\
\ar@{}[rrrrrrrr]_{\textstyle \oSigma(X)=\osigma\,\big/\,(\ZZ/2\ZZ)}&&&&&&&&
}$$
The image of $\sigma^\circ$ is not homeomorphic to the relative interior of any cone, but the cones of the barycentric subdivision of $\Sigma(V)$ map isomorphically to their images, giving $\Sigma(X)$ the structure of a cone complex, as in Figure \ref{Fig:barycentric-quotient} of Section \ref{Sec:generalized}.
\end{example}

The functorial properties of retraction to the skeleton also carry over to toroidal stacks:
\begin{proposition}\label{Prop:functoriality} The construction of $\boldp_\cX$ is functorial: if $f:\cX 
  \to \cY$ is a sub-toroidal morphism of toroidal Deligne--Mumford
  stacks, then $f^\beth:X^\beth \to Y^\beth$ restricts to a map of generalized extended cone complexes $\oSigma(f): \oSigma(X) \to \oSigma(Y)$. 
    In particular,  $\boldp_\cY\circ f^\beth =\oSigma(f) \circ \boldp_\cX$, and if  $g:\cY \to \cZ$ is another  sub-toroidal  morphism, then $\oSigma(g)\circ\oSigma(f) = \oSigma(g\circ f)$.
   \end{proposition}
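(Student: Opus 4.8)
The plan is to reduce the statement to the scheme-theoretic functoriality proposition of Section~\ref{Sec:Thuillier} by choosing compatible toroidal presentations of $f$, and then to invoke the colimit description of the stacky skeleton from Proposition~\ref{prop:Limit}. The first step is to build étale atlases $V\to\cX$ and $W\to\cY$ by schemes, with $U_V\subset V$ and $U_W\subset W$ toroidal embeddings without self-intersections, fitting into a commutative square over $f$ in which $V\to W$ is sub-toroidal. Concretely, choose $W\to\cY$ first, refining so that $U_W$ is without self-intersections (possible since being toroidal is étale-local and the normality of the boundary components can be forced by a further étale refinement); then $\cX\times_\cY W\to\cX$ is étale, so pick a scheme étale cover $V_0\to\cX\times_\cY W$ and refine it by $V\to V_0$ so that $U_V$ is without self-intersections. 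The composite $V\to\cX\times_\cY W\to W$ then lifts $f$, and it is sub-toroidal: $V\to\cX\times_\cY W$ is étale, hence toroidal, while $\cX\times_\cY W\to W$ is the étale base change of the sub-toroidal morphism $f$, hence sub-toroidal, and on toric charts the composite of an isomorphism of cones with an equivariant map dominating a torus-invariant subvariety again dominates a torus-invariant subvariety. Note that $V\to\cX$ remains an étale covering. For the composition law one builds in the same way a three-term tower $V\to W\to Z'$ of sub-toroidal morphisms of schemes, without self-intersections, lifting $\cX\to\cY\to\cZ$.

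With these presentations in hand, I would apply the scheme-theoretic functoriality proposition to $V\to W$ to get a morphism of extended cone complexes $\oSigma(f_V)\colon\oSigma(V)\to\oSigma(W)$, equal to the restriction of $f_V^\beth$ and satisfying $\boldp_W\circ f_V^\beth=\oSigma(f_V)\circ\boldp_V$. Since by Proposition~\ref{prop:Limit} we have $\boldp_\cX\circ(V^\beth\to X^\beth)=(\oSigma(V)\to\oSigma(\cX))\circ\boldp_V$ with $\boldp_V$ surjecting onto $\oSigma(V)$ and $V^\beth\to X^\beth$ surjective (right-exactness of the rows), the skeleton $\oSigma(\cX)$ is exactly the image of $\oSigma(V)$ in $X^\beth$, and likewise $\oSigma(\cY)$ is the image of $\oSigma(W)$. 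The commutative square of coarse spaces over $f$ analytifies to $f^\beth\circ(V^\beth\to X^\beth)=(W^\beth\to Y^\beth)\circ f_V^\beth$, so $f^\beth$ carries the image of $\oSigma(V)$ into the image of $\oSigma(W)$; hence $f^\beth(\oSigma(\cX))\subseteq\oSigma(\cY)$, and we set $\oSigma(f):=f^\beth|_{\oSigma(\cX)}$, manifestly independent of all choices. That $\oSigma(f)$ is a morphism of generalized extended cone complexes follows because, by Proposition~\ref{prop:Limit}, the presenting diagram of $\oSigma(\cX)$ is built from the cones of $\oSigma(V)$: on each such cone $\oSigma(f)$ agrees with $\oSigma(f_V)$ followed by the colimit map $\oSigma(W)\to\oSigma(\cY)$, and $\oSigma(f_V)$ is already a morphism of extended cone complexes. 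The composition law is then immediate from functoriality of $(-)^\beth$: $\oSigma(g\circ f)=(g\circ f)^\beth|_{\oSigma(\cX)}=g^\beth|_{\oSigma(\cY)}\circ f^\beth|_{\oSigma(\cX)}=\oSigma(g)\circ\oSigma(f)$.

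It remains to check the retraction identity $\boldp_\cY\circ f^\beth=\oSigma(f)\circ\boldp_\cX$, i.e.\ that $f^\beth$ intertwines the two retractions. Here I would chase the cube with vertical faces $V^\beth\to X^\beth$, $W^\beth\to Y^\beth$ and horizontal faces $f_V^\beth$, $f^\beth$: precomposing both sides with the surjection $V^\beth\to X^\beth$, and using the Proposition~\ref{prop:Limit} square $\boldp_\cX\circ(V^\beth\to X^\beth)=(\oSigma(V)\to\oSigma(\cX))\circ\boldp_V$ and its analogue for $\cY$, together with the scheme-level identity $\boldp_W\circ f_V^\beth=\oSigma(f_V)\circ\boldp_V$ and the definition of $\oSigma(f)$, both sides become $(\oSigma(W)\to\oSigma(\cY))\circ\oSigma(f_V)\circ\boldp_V$; since $V^\beth\to X^\beth$ is an epimorphism, the identity follows on all of $X^\beth$. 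The step I expect to be the main obstacle is the first one: arranging a single commutative square of étale atlases that are simultaneously schemes, carry toroidal embeddings without self-intersections, and realize $f$ by a sub-toroidal morphism — in particular, tracking sub-toroidality, an existence-of-good-toric-chart condition, through étale base change and through composition with and refinement by étale morphisms, while keeping the atlases compatible over $f$ and, for the composition statement, over a tower $\cX\to\cY\to\cZ$.
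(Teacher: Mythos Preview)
Your proposal is correct and follows essentially the same approach as the paper: choose compatible \'etale atlases by schemes without self-intersections (the paper takes $V\to\cY$ and $W\to V\times_\cY\cX$, your $W$ and $V$ are swapped), invoke the scheme-level functoriality, and deduce existence and the retraction identity by chasing the resulting cube using surjectivity of the atlas maps. The obstacle you flag---tracking sub-toroidality through base change and refinement---is real but routine, and the paper simply asserts the atlases exist without further comment; your more detailed verification of the composition law and the retraction identity is sound and fills in what the paper leaves implicit.
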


\begin{proof} Let $V \to \cY$ and  $W \to V \times_{\cY} \cX$  be \'etale coverings by schemes
such that the induced toroidal embeddings are without self-intersection. Then in the diagram 
$$\xymatrix@=.5pc{
W^\beth\ar[rr]\ar[dd]\ar[dr]&&X ^\beth \ar'[d][dd]\ar[dr]\\
 & V^\beth \ar[rr]\ar[dd]&& Y^\beth \ar[dd]\\
 \oSigma(W)\ar'[r][rr]\ar[dr]&&\oSigma(\cX) \ar@{.>}[dr]\\
&  \oSigma(V)\ar[rr]&&\oSigma(\cY) 
}$$ the dotted arrow exists since $f^\beth(\oSigma(\cX))$ lies in the image of $\oSigma(V)$.  It is a morphism of extended generalized cone complexes, since it is covered by the morphism of extended cone complexes $\oSigma(W) \to \oSigma(V)$. 
Now all but the right square are already known to be commutative, and the horizontal arrows are surjective, therefore the right square is commutative as well.
\end{proof}

 \begin{remark}
When constructing skeletons of toroidal De\-ligne--Mum\-ford stacks, it may be tempting to take colimits of diagrams of cone complexes in the category of topological stacks rather than in the category of topological spaces.  We avoid this for three reasons.  First, the cone complex $\Sigma(\cX)$ is a subset of $X^\an$ that lies over the generic point of $X$; if $v$ is in the cone $\sigma$ then the corresponding monomial valuation is finite on every nonzero function, by Formula~ (\ref{Eq:embedding-p}) in Section~\ref{sec:explicit-retraction}.  If $\cX$ does not have generic stabilizers, then no point of $\Sigma(\cX)$ has stabilizers, when considered as a point of $\cX^\an$.  Next, the same distinction between the colimit in the category of topological stacks and the colimit in the category of topological spaces appears already for toroidal embeddings of varieties with self-intersection, even when the underlying toroidal space has no nontrivial stack structure, as seen in Example~\ref{ex:Monodromy}.  Finally, the colimit of the diagram $\oSigma(V_2) \double \oSigma(V)$ in the category of topological stacks depends on the choice of \'etale cover, while the colimit in the category of topological spaces is independent of all choices.   The following example illustrates this possibility.
 
  \begin{example}
 Let $U \subset X$ be a toroidal scheme, so the embedding of $U \times \bG_m$ in $X \times \bG_m$ is also toroidal.  Fix an integer $n \geq 2$.  Then $X \times \bG_m$ has a natural \'etale cover $V$ induced by $z \mapsto z^n$ on $\bG_m$.  The resulting diagram $\oSigma(V_2) \double \oSigma(V)$ realizes $\oSigma(X)$ as the quotient of $\oSigma(V)$ by the trivial action of a cyclic group of order $n$.  In particular, the colimit in the category of topological spaces is $\oSigma(V)$, whereas the colimit in the category of topological stacks has a nontrivial stabilizer at every point that depends on the choice of $n$.
 \end{example}
 
 The issue seems to come from the fact that the underlying topological space of a scheme or a Berkovich space should come with a stack structure; possibly points should be replaced by the classifying stacks of appropriate Galois groups. This seems compatible with results of \cite{Chan-Melo-Viviani}. The simplicial space giving the \'etale topological type of $X^\beth$ and its restriction to $\oSigma(X)$ may provide an appropriate formalism.
  \end{remark}

\subsection{Monodromy of toroidal embeddings} \label{Sec:monodromy}
Let $U \subset \cX$ be a toroidal Deligne--Mumford stack.  For each \'etale morphism from a scheme $V \rightarrow \cX$, let $M_V$ be the group of Cartier divisors on $V$ that are supported on the boundary $V \smallsetminus U_V$, and let $S_V \subset M_V$ be the submonoid of effective divisors.  Let $M$ and $S$ be the \'etale sheaves associated to these presheaves, respectively.  In the language of logarithmic geometry, $S$ is the characteristic monoid sheaf associated to the open embedding $U \subset \cX$, and $M$ is the characteristic abelian sheaf.

\begin{proposition}
The sheaves $S$ and $M$ are locally constant  in the \'etale topology on each stratum $W \subset \cX$.
\end{proposition}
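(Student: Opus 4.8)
The plan is to reduce to the case of a toroidal embedding of schemes without self-intersections, where the corresponding statement is part of the theory of \cite{KKMS}, and then transport it back to $\cX$ along an étale cover.

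First I would note that being locally constant in the étale topology may be checked after an étale surjection, so the assertion is étale-local on $\cX$. I would therefore fix an étale covering $p\colon V\to\cX$ by a scheme such that $U_V\subset V$ is a toroidal embedding \emph{without} self-intersections; such a $V$ exists because a toroidal embedding of schemes is étale-locally isomorphic to the inclusion of the dense torus in a toric variety, which has no self-intersections. Since the presheaves $V'\mapsto S_{V'}$ and $V'\mapsto M_{V'}$ are already sheaves on the small étale site of a scheme (they are subsheaves of the sheaf of Cartier divisors, cut out by support and effectivity conditions), one has $p^{*}S=S_{V}$ and $p^{*}M=M_{V}$; and since the toroidal stratification is intrinsic to the toroidal structure, hence compatible with the étale map $p$, the preimage $p^{-1}(W)$ of a stratum $W\subset\cX$ is a disjoint union of connected strata of $V$. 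As $p^{-1}(W)\to W$ is an étale surjection, it then suffices to show that $S_{V}$ and $M_{V}$ are locally constant — in fact constant — along each connected stratum $Y$ of $V$.

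The heart of the argument is this last statement. Here I would invoke the local structure theory of toroidal embeddings without self-intersections: a connected stratum $Y\subset V$ has a well-defined cone $\sigma^{Y}=\Hom(S^{Y},\RR_{\geq 0})$, where $M^{Y}$ is the group of Cartier divisors defined in a neighbourhood of the generic point of $Y$ and supported on $V\setminus U_{V}$, and $S^{Y}\subset M^{Y}$ the submonoid of effective ones; moreover every point $y\in Y$ has an étale neighbourhood admitting an étale map to the affine toric variety $V_{\sigma^{Y}}$ that carries $Y$ into the closed torus orbit and $U_{V}$ to the preimage of the dense torus. Over such a chart every boundary Cartier divisor is pulled back from $V_{\sigma^{Y}}$, so $S_{V}$ and $M_{V}$ are, locally on $Y$, the constant sheaves with values $S^{Y}$ and $M^{Y}$; since $\sigma^{Y}$, $S^{Y}$ and $M^{Y}$ depend only on the stratum $Y$ and not on the point $y$, these local trivialisations agree on overlaps, and $S_{V}$, $M_{V}$ are constant along $Y$. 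Pulling back along $p$ then yields that $S$ and $M$ are locally constant on $W$ in the étale topology.

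The main obstacle I anticipate is making the local structure statement precise and correctly attributed, namely that the boundary-divisor monoid attached to a point of a connected stratum of a toroidal embedding without self-intersections does not depend on the point; this is built into the construction of the cone complex in \cite[Chapter~II]{KKMS}, and can alternatively be phrased logarithmically: $S$ is the characteristic monoid sheaf $\overline{\cM}$ of the log structure associated to $U\subset\cX$ and $M=\overline{\cM}^{\mathrm{gp}}$, and for a log smooth (equivalently, toroidal) scheme the characteristic sheaf is locally constant on the locus of constant stalk rank, this locus being precisely a union of strata. The self-intersecting or stacky case is genuinely only \emph{locally} constant: as in Example~\ref{ex:Monodromy}, the étale equivalence relation $V_{2}\double V$ can act nontrivially on a stratum, producing nontrivial monodromy on $S$ and $M$.
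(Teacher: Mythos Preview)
Your proof is correct and follows essentially the same approach as the paper: reduce via an \'etale cover to a toroidal embedding without self-intersections and then invoke the fact from \cite[Lemma~II.1.1, p.~60]{KKMS} that $M$ (and hence $S$) is constant on each stratum there. The paper's proof is a two-line citation of this fact, whereas you have additionally sketched why it holds and given the logarithmic reformulation, but the logical structure is identical.
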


\begin{proof}
It suffices to check this for $M$, and it is enough to exhibit an \'etale cover on $W$ where the sheaf $M$ is constant. 
Since $\cX$ has an \'etale cover by a toroidal embedding of schemes without self-intersections, this follows from the fact that $M$ is constant on each stratum of any toroidal embedding without self-intersection \cite[Lemma~II.1.1, p.~60]{KKMS}.
\end{proof}

Fix a stratum $W \subset \cX$ and a geometric point $w$ of $W$.  The stalk $M_w$ is the group of \'etale local germs of Cartier divisors at $w$ supported on $\cX \smallsetminus U$,  and $S_w$ is the submonoid of germs of effective Cartier divisors.  Note that $M_w$ is a finitely generated free abelian group and $S_w$ is a sharp, saturated, and finitely generated submonoid that generates $M_w$ as a group.  Hence the dual cone $\sigma_w$, the additive submonoid of linear functions on $M_w$ that are nonnegative on $S_w$, is a strictly convex, full-dimensional, rational polyhedral cone in $\Hom(M_w, \RR)$.  Since $M_w$ is \'etale locally constant, there is a natural action of $\pi_1^{et}(W,w)$ on $M_w$ that preserves $S_w$.  See \cite{Noohi04} for details on \'etale fundamental groups of Deligne--Mumford stacks.

\begin{definition}\label{Def:monodromy} 
The monodromy group $H_{w}$ is the image of $\pi_1^{et}(W,w)$ in $\Aut(M_w)$.
\end{definition} 

\noindent The action of $\pi_1^{et}(W,w)$ on $M_w$ is determined by the induced permutations of the extremal rays of $\sigma_w$.  In particular, the monodromy group $H_w$ is finite.  

\begin{remark}
Note that any two geometric points $w$ and $w'$ in the same stratum $W \subset \cX$ have isomorphic monodromy groups, where the isomorphism is well-defined up to conjugation.  Similarly, the cones $\sigma_w$ and $\sigma_{w'}$ are isomorphic, by isomorphisms that are compatible with the actions of $H_w$ and $H_{w'}$, and well-defined up to conjugation by these actions.  In particular, the quotient $\sigma_w / H_w$ depends only on the stratum $W$, and not on the point $w$.
\end{remark}

To study the monodromy group $H_w$ at a point $w$ in a stratum $W \subset \cX$, we therefore study local charts given by \'etale covers by toroidal embeddings of schemes without self-intersection, where the monodromy is trivialized.

\begin{definition}
An \'etale morphism $V \rightarrow \cX$ from a scheme $V$ to a toroidal Deligne--Mumford stack $U \subset \cX$ is a \emph{small toric chart} around a geometric point $w$ of $\cX$ if
\begin{enumerate} 
\item the toroidal
embedding $U_V \subset V$ is without self-intersections,
\item there is a unique closed stratum
$\widetilde W \subset V$, and
\item the image of $\widetilde W$ contains $w$.
\end{enumerate}
\end{definition} 

Fix a small toric chart $V \rightarrow \cX$ and a point $\widetilde w$  of  $\widetilde W$ lifting $w$.  Since $V$ is without self-intersection, the \'etale sheaves $M$ and $S$ are constant on $\widetilde W$, so $\pi_1^{et}(\widetilde W, \widetilde w)$ acts trivially on $M_w$.  The skeleton $\oSigma(V)$ is simply the extended cone
 \[
 \osigma_V = \Hom(S_V, \RR_{\geq 0} \sqcup \{\infty\}).
 \] 

\begin{remark}
The monodromy group $H_w$ can be detected from a single small toric chart $V$ around $w$, as follows.  Let $V_2 = V \times_\cX V$.   Consider a point $y\in V_2$ lying over $x_1,x_2\in \widetilde W$, mapping to $w\in W$. Since $M$ is constant on $\widetilde W$, we can identify $M_{x_1}\simeq H^0(\widetilde W_V, M) \simeq M_{x_2}$. On the other hand pulling back we get $M_{x_2} \simeq  M_{y} \simeq M_{x_1}$. This determines an automorphism of $M_w$, and every element of $H_w$ occurs in this way. 
\end{remark}

We now state and prove the main technical result of this section, which says that the skeleton of an arbitrary toroidal embedding of Deligne--Mumford stacks decomposes as a disjoint union of extended open cones, one for each stratum, modulo the action of the respective monodromy groups.

\begin{proposition}\label{Prop:decomposition}
Let $W_1, \ldots, W_s$ be the strata of a toroidal Deligne--Mumford stack $\cX$, and let $w_i$ be a point in $W_i$.  Write $\sigma_i$ for the dual cone of $S_{w_i}$ and $H_i$ for the monodromy group at $w_i$. Let $\sigma_i^\circ$ be the relative interior of $\sigma_i$.  Then we have natural decompositions
$$\Sigma(\cX) \ \ = \ \ \sigma_1^\circ/ H_1 \ \sqcup \cdots  \sqcup\  \sigma_s^\circ/H_s,$$
and
$$\oSigma(\cX) \ \ = \ \ \osigma_1^\circ/ H_1\  \sqcup \cdots \sqcup\  \osigma_s^\circ/H_s.$$
Furthermore, if $V_i \rightarrow \cX$ is a small toric chart around $w_i$, with $V' = V_1 \sqcup \cdots \sqcup V_s$ and $V'_2 = V' \times_\cX V'$, then the natural map
\[
\varinjlim \left( \Sigma(V'_2) \double \Sigma(V') \right) \rightarrow \Sigma(\cX)
\]
is an isomorphism of generalized cone complexes, and 
\[
\varinjlim \left( \oSigma(V'_2) \double \oSigma(V') \right) \rightarrow \oSigma(\cX)
\] 
is an isomorphism of extended generalized cone complexes.
\end{proposition}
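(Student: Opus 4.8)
The plan is to deduce the Proposition from three facts established above: the topological colimit description of $\Sigma(\cX)$ and $\oSigma(\cX)$ in Proposition~\ref{prop:Limit}, the explicit formula for the retraction $\boldp$ in monomial coordinates of Section~\ref{sec:explicit-retraction}, and the monodromy analysis of Section~\ref{Sec:monodromy}. Fix an \'etale covering $V\to\cX$ by a scheme with $U_V\subset V$ a toroidal embedding without self-intersections, write $V_2=V\times_\cX V$, and recall from Proposition~\ref{prop:Limit} that $\oSigma(\cX)$ is the topological colimit of $\oSigma(V_2)\double\oSigma(V)$ while $\Sigma(\cX)$ is the image under $\boldp_\cX$ of the points of $X^\beth$ lying over $U$.

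First I would prove the two disjoint-union decompositions. Let $x\in X^\beth$, represented by an $R$-point of $X$ with $R$ a valuation ring with algebraically closed fraction field; lift $x$ to $\tilde x\in\cX(R)$, unique up to isomorphism, and let $W_i$ be the unique stratum of $\cX$ containing the reduction $\overline{\tilde x}$. Choose a small toric chart $V_i\to\cX$ around $w_i$ and a lift of $x$ through it. Since $V_i$ has a unique closed stratum $\widetilde W$ mapping to $W_i$, its skeleton is the single extended cone $\osigma_i=\Hom(S_{w_i},\RR_{\geq 0}\sqcup\{\infty\})$, and formula~\eqref{Eq:description-p} identifies the image of $x$ in $\osigma_i$ with the monoid homomorphism sending an effective boundary divisor to the valuation of its local equation. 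The only choice here is the trivialization of the locally constant sheaf $S$ on $\widetilde W$, i.e. its identification with $S_{w_i}$, and this is well defined exactly up to the action of $\pi_1^{et}(W_i,w_i)$, hence up to $H_i$; so $\boldp_\cX(x)$ is a well-defined point of $\osigma_i/H_i$, it lies in the relative interior $\osigma_i^\circ/H_i$ exactly when $\overline{\tilde x}$ lies in the smallest stratum of the chart (that is, in $W_i$) by the remark following formula~\eqref{Eq:embedding-p}, and it lies in $\Sigma(\cX)$ exactly when $x$ lies over $U$. Conversely, by formula~\eqref{Eq:embedding-p} and the remark above that every element of $H_i$ is realized by a point of $V_i\times_\cX V_i$, the induced map $\osigma_i\to\oSigma(\cX)$ is exactly the quotient by $H_i$. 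Letting $x$ range over $X^\beth$ this gives $\oSigma(\cX)=\bigsqcup_i\osigma_i^\circ/H_i$, and restricting to the part over $U$, $\Sigma(\cX)=\bigsqcup_i\sigma_i^\circ/H_i$.

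For the colimit statement, take a small toric chart $V_i\to\cX$ around $w_i$ for each $i$, put $V'=\bigsqcup_i V_i$ and $V'_2=V'\times_\cX V'=\bigsqcup_{i,j}V_i\times_\cX V_j$, so that $\oSigma(V')=\bigsqcup_i\osigma_i$ and $\oSigma(V'_2)=\bigsqcup_{i,j}\oSigma(V_i\times_\cX V_j)$; the two projections give the maps $\oSigma(V'_2)\double\oSigma(V')$, which are morphisms of extended generalized cone complexes by Proposition~\ref{Prop:functoriality}. The diagonal summands $V_i\times_\cX V_i$ account, over the stratum $W_i$, for exactly the identifications on $\osigma_i$ generated by $H_i$ --- this is the content of the remark above on detecting monodromy from a single small toric chart --- while an off-diagonal summand $V_i\times_\cX V_j$ glues a face of $\osigma_i$ to a face of $\osigma_j$ along each stratum of $\cX$ lying in the images of both charts. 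Using the decomposition of the previous paragraph, the natural map $\varinjlim(\oSigma(V'_2)\double\oSigma(V'))\to\oSigma(\cX)$ is surjective because the $V_i$ meet every stratum of $\cX$, and it is injective because two points of $\bigsqcup_i\osigma_i$ have the same image in $\oSigma(\cX)$ only if matched by a point of some $V_i\times_\cX V_j$; being also compatible with the cone-complex structures, it is an isomorphism of extended generalized cone complexes. The identical argument, with $\RR_{\geq 0}$ in place of $\RR_{\geq 0}\sqcup\{\infty\}$, gives the statement for $\Sigma(\cX)$.

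The step I expect to be hardest is this last one: upgrading the set-theoretic and topological match to an isomorphism of (extended) generalized cone complexes. One must verify that the diagram $\oSigma(V'_2)\double\oSigma(V')$ imposes precisely the monodromy foldings and the face gluings and nothing more, which comes down to a careful description of the strata of the fiber products $V_i\times_\cX V_j$ lying over a fixed stratum of $\cX$, of their dual cones, and of the two projections between them --- exactly where the monodromy computation of Section~\ref{Sec:monodromy} and the functoriality of Proposition~\ref{Prop:functoriality} do the work. A secondary point to be careful about is that $V'=\bigsqcup_i V_i$ need not be surjective onto $\cX$, only onto the set of strata; this is harmless for the colimit, but if one wants to invoke Proposition~\ref{prop:Limit} literally one may instead use the honest covering $V\sqcup V'$ and note that every cone coming from the $V$-summand is identified in the colimit with a cone from the $V'$-summand, so the colimit does not change.
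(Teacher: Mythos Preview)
Your proposal is correct and follows essentially the same approach as the paper: establish the decomposition via small toric charts and the monodromy action, then deduce the colimit statement. The paper handles the two points you flag as delicate by (i) proving that every element of $H_i$ arises from $V_i\times_\cX V_i$ via the surjection $\pi_1^{et}(W_i',w_i)\twoheadrightarrow\pi_1^{et}(W_i,w_i)$, using smoothness of the strata after shrinking $V_i$ so that $\widetilde W_i\to W_i$ is finite onto its image $W_i'$, and (ii) obtaining injectivity of the colimit map not by your $V\sqcup V'$ trick but by restricting to the open substack $\cX'=\operatorname{im}(V')\subset\cX$, where $V'$ \emph{is} an \'etale cover, applying Proposition~\ref{prop:Limit} there, and then composing with the inclusion ${X'}^\beth\hookrightarrow X^\beth$.
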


\begin{proof} 
First, note that it suffices to prove the statements for $\oSigma(\cX)$.  The decomposition statement for $\Sigma(\cX)$ follows from the one for $\oSigma(\cX)$, because $\sigma_i^\circ = \osigma_i^\circ \cap \Sigma(\cX)$.  Similarly, the isomorphism statement for $\Sigma(\cX)$ follows from the one for $\oSigma(\cX)$ because $\Sigma(V')$ and $\Sigma(V'_2)$ are the preimages of $\Sigma(\cX)$ in $\oSigma(V')$ and $\oSigma(V'_2)$, respectively.

Since each $V_i$ is a small toric chart, its skeleton is the single extended cone $\oSigma(V_i) = \osigma_i$, and hence
\[
\oSigma(V') = \osigma_1 \sqcup \cdots \sqcup \osigma_s.
\]
We write $\im(\osigma_i^\circ)$ for the image of $\osigma_i^\circ$ in $\oSigma(\cX)$.  First, we show that $\osigma_1^\circ \sqcup \cdots \sqcup \osigma_s^\circ$ surjects onto $\oSigma(\cX)$.  This does not follow from the definition of the skeleton, since $V_1 \sqcup \cdots \sqcup V_s$ need not surject onto $\cX$.  However, suppose $V^* \rightarrow \cX$ is a small toric chart around a point in $W_i$.  Then $V^*\times_\cX V_i$ contains a small toric chart, whose skeleton maps isomorphically by the two projections to $\osigma_{V^*}$ and $\osigma_i$, see \cite[Lemma 3.28 (2)]{Thuillier}.  Hence the natural map $\osigma_{V^*} \rightarrow \oSigma(\cX)$ factors through an isomorphism to $\osigma_i$.  Therefore, we can extend $V_1 \sqcup \cdots \sqcup V_s$ to a cover of $\cX$ by small toric charts and conclude that $\osigma_1 \sqcup \cdots \sqcup \osigma_s$ surjects onto $\oSigma(\cX)$.  Finally, each face of $\osigma_i$ corresponds to a stratum of $\cX$ whose closure contains $W_i$, so the image of each face of positive codimension in $\osigma_i$ is also in the image of a lower dimensional cone, and we conclude that $\osigma_1^\circ \sqcup \cdots \sqcup \osigma^\circ_s$ surjects onto $\oSigma(\cX)$.

Next, we observe that the images of $\osigma_1^\circ, \ldots, \osigma_s^\circ$ are disjoint, since a point of $\osigma_i^\circ$, considered as a point of $X^\beth$, extends to a point over $\Spec R$ whose reduction lies in $W_i$.  This shows $\oSigma(\cX) = \im(\osigma_1^\circ) \sqcup \cdots \sqcup \im(\osigma_s^\circ)$.

To prove the decomposition statement, it remains to show $\im(\osigma_i^\circ) = \osigma_i^\circ/ H_i$. Shrinking $V_i$ if necessary and writing $\widetilde W_i\subset V_i$ for the closed stratum, we may assume that the \'etale map of strata $\widetilde W_i \rightarrow W_i$ is finite onto its image.  Say $W'_i \subset W_i$ is the image of  $\widetilde W_i$.  Since each stratum $W_i$ is smooth, the fundamental group $\pi_1^{et}(W_i', w_i)$ surjects onto $\pi_1^{et}(W_i, w_i)$.  The sheaves $M$ and $S$ are trivial on $W_i'$, so every monodromy operator $g \in H_i$ is induced by some geometric point $y$ of $ W_i\times_\cX W_i$ over a pair of points $w$ and $w'$ in $W'_i$ that lie over $w_i$.  Let $V'_i$ be the component of $V_i \times_\cX V_i$ containing $y$.  Then the projections $\osigma_{V'_i} \double \osigma_i$ induce the identification $g: \osigma_i \xrightarrow{\sim} \osigma_i$  Therefore, two points in $\osigma_i^\circ$ that differ by an element of $H_i$ have the same image in $\oSigma(\cX)$.  Conversely, if $v$ and $v'$ are points in $\osigma_i^\circ$ that have the same image in $X^\beth$, then we can consider each as a point of $V_i^\beth$, and consider a point $y$ in $V_i^\beth \times_{\cX} V_i^\beth$ lying over $v$ and $v'$.  Then the monodromy operator associated to the reduction of $y$ maps $v$ to $v'$ in $\osigma_i^\circ$.  This proves the decomposition statement.

We now turn to the isomorphism statement.  As discussed in Section~\ref{Sec:complexes}, the category of generalized extended cone complexes is an extension of the category of extended cones in which any finite diagram of extended cones with face morphisms has a colimit.  Furthermore, the functor taking an extended cone to its underlying topological space extends to a faithful functor on generalized extended cone complexes that commutes with colimits.  We have seen that the natural map
\[
\varinjlim \left( \oSigma(V'_2) \double \oSigma(V') \right) \rightarrow \oSigma(\cX)
\] 
is surjective.  Let $\cX' \subset \cX$ be the image of $V'$.  Then
\[
\varinjlim \left( \oSigma(V'_2) \double \oSigma(V') \right) \rightarrow \oSigma(\cX')
\] 
is an isomorphism.  In particular, it is injective.  Composing with the inclusions $\oSigma(\cX') \subset {X'}^\beth$ and ${X'}^\beth \subset X^\beth$ shows that the map from the colimit to $\oSigma(\cX)$ is injective, and hence bijective.  Being a continuous bijection between compact Hausdorff spaces, it is a homeomorphism.  Finally, since all of the maps in the diagram are face maps, the natural integral structures are preserved, and the homeomorphism is an isomorphism of extended generalized cone complexes.
\end{proof}

\section{The skeleton of $\ocM_{g,n}$}
\label{Sec:proofs}

 In this section, we interpret the general construction of the retraction of a toroidal Deligne--Mumford stack onto its canonical skeleton in the special case of $\cM_{g,n} \subset \ocM_{g,n}$ and show that $\oSigma(\ocM_{g,n})$ is naturally identified with the tropical moduli space $\oM_{g,n}^\trop$.

\subsection{Versal deformation spaces} \label{Sec:versal} We begin by recalling some facts about deformations of stable curves \cite[Chapters XI, XII]{Arbarello-Cornalba-Griffiths}. Fix a  point $p$ in $\ocM_{g,n}$ corresponding to a stable curve $C$.  Then $p$ has an \'etale neighborhood $V_p \rightarrow \ocM_{g,n}$ in which the locus parametrizing deformations of $C$ in which the node $q_i$ persists is a smooth and irreducible principal divisor $D_i$ with defining equation $f_i$, and the collection of divisors corresponding to all nodes of $C$ has simple normal crossings.  Shrinking $V_p$ if necessary, we may assume that the locus in $V_p$ parametrizing singular curves is the union of these divisors and, for each collection of nodes $\{q_i\}_{i \in I}$, the corresponding intersection
\[
W_I = \bigcap_{i \in I} D_i
\]
is irreducible.  The completion of $V_p$ at $p$ is a formal affine scheme and the $f_i$ are a subset of a system of formal local coordinates.  Furthermore, the dual graph of any curve in the family parametrized by $V_p$ is a contraction of the dual graph $\WG$ of $C$.

The curves parametrized by $D_i$ are exactly those having dual graphs in which the edge $e_i$ corresponding to the node $q_i$ is not contracted.  More generally, the locally closed stratum
\[
W_I^\circ \subset W_I,
\] 
consisting of points that are in $D_i$ if and only if $i \in I$, parametrizes those curves whose dual graph is $\WG_{/E'}$, the graph in which the edges in $E'$ are contracted and only the edges $\{e_i\}_{i \in I}$ remain, where $E' = \{e_j\}_{j \notin I}$.  

Since the defining equation $f_i$ of $D_i$ on $V_p$ measures deformations of the node $q_i\in C$, it has the following interpretation in terms of the local defining equations of the curve at the node.  Consider a valuation ring $R$ and a morphism $\phi: \Spec R\to V_p$, corresponding to a curve $C_R$ over $\Spec R$.  Assume the closed point in $\Spec R$ maps into the stratum $W_I^\circ$. Then, for $i \in I$, the node $q_i$ in the special fiber of $C_R$ has an \'etale neighborhood in $C_R$ with defining equation $xy = f_i$, where we identify $f_i$ with its image in $R$.

\subsection{Monodromy on $\oSigma(\ocM_{g,n})$} 

Let $V \rightarrow \ocM_{g,n}$ be a small toric chart around a point $p$ in the stratum $\cM_\WG$, such as the versal deformation spaces discussed above.  Then the skeleton $\oSigma(V)$ is a single copy of the extended cone $\osigma_\WG$.  By Proposition~\ref{Prop:decomposition}, the image of $\osigma^\circ_\WG$ in $\oSigma(\ocM_{g,n})$ is the quotient of $\osigma_\WG^\circ$ by the monodromy group $H_\WG$.  Recall that, by definition, $H_\WG$ is the image of $\pi_1^{et}(\cM_\WG,p)$ in $\Aut(\osigma_\WG)$.

\begin{proposition} \label{prop:full-monodromy}
The monodromy group $H_\WG$ is the image of $\Aut(\WG)$ in the set of permutations of $E(\WG)$.
\end{proposition}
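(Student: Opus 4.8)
The plan is to deduce the statement from the explicit presentation $\cM_\WG \cong [\widetilde\cM_\WG/\Aut(\WG)]$ of Proposition~\ref{Prop:stratum-quotient}, together with the observation that the nodes of the universal curve form a \emph{constant} finite family of sections once pulled back to $\widetilde\cM_\WG$.

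First I would unwind the definitions at a point $p \in \cM_\WG$. By the discussion of versal deformation spaces in Section~\ref{Sec:versal}, near $p$ the boundary divisor is the simple normal crossings union $\bigcup_{i \in E(\WG)} D_i$, where $D_i$ is the locus parametrizing deformations in which the node $q_i$ of $C_p$ persists, with defining equation $f_i$. Hence $M_p$ is free on $\{D_i\}_{i \in E(\WG)}$, the submonoid $S_p$ is $\NN^{E(\WG)}$, and the dual cone is $\sigma_\WG = \RR_{\geq 0}^{E(\WG)}$, whose extremal rays are the coordinate rays. By the discussion following Definition~\ref{Def:monodromy}, the action of $\pi_1^{et}(\cM_\WG, p)$ on $M_p$ is by permutations of these rays, so it factors through $\Sym(E(\WG))$ and is recorded by a homomorphism $\pi_1^{et}(\cM_\WG,p) \to \Sym(E(\WG))$ with image $H_\WG$. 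Since $D_i$ is the canonical equisingular deformation divisor of the node $q_i$, this homomorphism is $\pi_1^{et}$-equivariantly identified with the monodromy of the finite \'etale cover of $\cM_\WG$ whose fibre over $p$ is the set of nodes of $C_p$, indexed by $E(\WG)$.

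Next I would invoke Proposition~\ref{Prop:stratum-quotient}: $\cM_\WG \cong [\widetilde\cM_\WG/\Aut(\WG)]$ with $\widetilde\cM_\WG = \prod_v \cM_{h(v),n_v}$. Stability of $\WG$ forces $2h(v)-2+n_v>0$ for each $v$, so each factor is a nonempty irreducible moduli stack and $\widetilde\cM_\WG$ is connected; thus $\widetilde\cM_\WG \to \cM_\WG$ is a connected Galois cover with group $\Aut(\WG)$, giving an exact sequence
\[
\pi_1^{et}(\widetilde\cM_\WG) \longrightarrow \pi_1^{et}(\cM_\WG, p) \longrightarrow \Aut(\WG) \longrightarrow 1
\]
of \'etale fundamental groups of Deligne--Mumford stacks (see \cite{Noohi04}). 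By Proposition~\ref{Prop:stratum-quotient} the universal curve pulls back along this cover to the glued family $\widetilde\cC_\WG \to \widetilde\cM_\WG$ of Section~\ref{sec:autg-cover}, whose nodes are the images of a \emph{fixed} collection of sections indexed by $E(\WG)$. Hence the node cover of $\cM_\WG$ becomes trivial on $\widetilde\cM_\WG$, so the monodromy homomorphism $\pi_1^{et}(\cM_\WG,p) \to \Sym(E(\WG))$ factors through $\Aut(\WG)$; and the induced action of $\Aut(\WG)$ on $E(\WG)$ is the tautological one, since the $\Aut(\WG)$-action on $\widetilde\cC_\WG$ intertwining $[\widetilde\cC_\WG/\Aut(\WG)] \cong \cC_\WG$ permutes the nodes by the natural action of $\Aut(\WG)$ on $E(\WG)$. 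Because $\pi_1^{et}(\cM_\WG, p) \to \Aut(\WG)$ is surjective, these two facts together show that $H_\WG$ is exactly the image of $\Aut(\WG)$ in $\Sym(E(\WG))$.

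I expect the main obstacle to be the bookkeeping around the characteristic sheaf: verifying that the identification of $M$, built from the deformation divisors $D_i$, with the ``node cover'' of $\cM_\WG$ is genuinely $\pi_1^{et}$-equivariant, and that this cover trivializes over $\widetilde\cM_\WG$ because the nodes of $\widetilde\cC_\WG$ form an honestly constant family of sections there rather than merely a locally constant sheaf. Once these points are secured, the exact sequence for $\pi_1^{et}$ of the quotient stack, the surjectivity of $\pi_1^{et}(\cM_\WG,p)\to\Aut(\WG)$ coming from connectedness of $\widetilde\cM_\WG$, and the identification of the $\Aut(\WG)$-action on half-edges with its action on $E(\WG)$ combine to give the result with no further computation.
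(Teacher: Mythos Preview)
Your proof is correct and follows essentially the same approach as the paper: both use the Galois cover $\widetilde\cM_\WG \to \cM_\WG$ with group $\Aut(\WG)$ from Section~\ref{sec:autg-cover}, observe that the sheaf of nodes (which furnishes a basis for $M$ and $S$ via the versal deformation description) trivializes on $\widetilde\cM_\WG$, and conclude that the monodromy factors through the natural $\Aut(\WG)$-action on $E(\WG)$. You are slightly more explicit than the paper about the surjectivity of $\pi_1^{et}(\cM_\WG,p)\to\Aut(\WG)$ (via connectedness of $\widetilde\cM_\WG$), which the paper leaves implicit.
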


\begin{proof}
To compute the monodromy group $H_{\WG}$, we consider the Galois cover $\widetilde \cM_\WG \rightarrow \cM_\WG$, with Galois group $\Aut(\WG)$, from Section~\ref{sec:autg-cover}.  The pullbacks of the sheaves $M$ and $S$ are trivial on $\widetilde M_\WG$ because, by construction, the cover $\widetilde \cM_\WG \rightarrow \cM_\WG$ trivializes the locally constant sheaves of sets on $\cM_\WG$ whose stalk at a point $x$ is the set of nodes of the corresponding curve $C_x$.  By the discussion of versal deformations above, these sets form a group basis for $M$ and a monoid basis for $S$.  The action of $\pi_1^{et}(\cM_\WG, p)$ therefore factors through its quotient $\Aut(\WG)$, acting in the natural way on $\osigma_\WG$.
\end{proof}

\begin{corollary}
The skeleton $\oSigma(\ocM_{g,n})$ decomposes as a disjoint union
\[
\oSigma(\ocM_{g,n}) = \bigsqcup_\WG \  \osigma^\circ_\WG / \Aut(\WG).
\]
\end{corollary}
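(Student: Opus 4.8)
The plan is to deduce the Corollary directly from Proposition~\ref{Prop:decomposition}, applied to the (proper, smooth) toroidal Deligne--Mumford stack $\cM_{g,n} \subset \ocM_{g,n}$, once the combinatorial data appearing in that proposition has been matched with the stable weighted graphs $\WG$.

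First I would recall from Section~\ref{Sec:strata} that the strata of $\ocM_{g,n}$ with its toroidal structure are exactly the locally closed substacks $\cM_\WG$, indexed by isomorphism classes of stable weighted graphs $\WG$ of genus $g$ with $n$ legs. Fix such a $\WG$ and a point $p \in \cM_\WG$ corresponding to a stable curve $C$. The versal deformation spaces recalled in Section~\ref{Sec:versal} furnish small toric charts around $p$, and the description there shows that the stalk at $p$ of the characteristic monoid sheaf $S$ has as a monoid basis the boundary divisors $D_i$, one per node $q_i$ of $C$, equivalently one per edge $e_i \in E(\WG)$; hence $S_p \cong \NN^{E(\WG)}$, its dual cone is $\Hom(S_p, \RR_{\geq 0}) \cong \RR_{\geq 0}^{E(\WG)} = \sigma_\WG$ with its standard integral structure, and the associated extended cone is $\osigma_\WG$.

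Next I would invoke Proposition~\ref{prop:full-monodromy}: the monodromy group $H_\WG$ at $p$ is the image of $\Aut(\WG)$ in the permutations of $E(\WG)$, acting on $\osigma_\WG = (\RR_{\geq 0} \sqcup \{\infty\})^{E(\WG)}$ by permuting coordinates. Since the kernel of $\Aut(\WG) \to \Sym(E(\WG))$ fixes each coordinate, the action of $\Aut(\WG)$ on $\osigma_\WG$ is the same as that of $H_\WG$, so the orbit spaces coincide:
\[
\osigma_\WG^\circ / H_\WG \ = \ \osigma_\WG^\circ / \Aut(\WG).
\]

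Finally, Proposition~\ref{Prop:decomposition} applied to $\cX = \ocM_{g,n}$ expresses $\oSigma(\ocM_{g,n})$ as the disjoint union, over the strata, of one piece $\osigma^\circ / H$, where $\sigma$ is the dual cone of the local characteristic monoid at a chosen point of the stratum and $H$ is the corresponding monodromy group. For the stratum $\cM_\WG$ these are $\sigma_\WG$ and $H_\WG$ by the two paragraphs above, and $\osigma_\WG^\circ / H_\WG = \osigma_\WG^\circ / \Aut(\WG)$; hence $\oSigma(\ocM_{g,n}) = \bigsqcup_\WG \osigma_\WG^\circ / \Aut(\WG)$, as claimed. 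I do not anticipate any real obstacle here --- the argument is pure assembly of results already in hand. The two points that want a moment's care are: (i) that the versal deformation spaces of Section~\ref{Sec:versal} genuinely satisfy the three conditions defining a small toric chart, so that Proposition~\ref{Prop:decomposition} applies and the relevant dual cone is $\sigma_\WG$ with its standard lattice; and (ii) that replacing $\Aut(\WG)$ by its image $H_\WG$ in $\Sym(E(\WG))$ does not change the quotient of $\osigma_\WG^\circ$. Both are already addressed in the preceding subsections.
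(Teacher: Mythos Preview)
Your proposal is correct and follows exactly the approach the paper intends: the Corollary is stated without proof immediately after Proposition~\ref{prop:full-monodromy}, as an immediate consequence of combining that proposition with the general decomposition of Proposition~\ref{Prop:decomposition}. Your care in noting that $H_\WG$ is only the image of $\Aut(\WG)$ in $\Sym(E(\WG))$, and that the kernel acts trivially on $\osigma_\WG$ so the quotients agree, makes explicit a point the paper leaves implicit.
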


\subsection{Proof of Theorem~\ref{Th:functor}}
We have seen that both the skeleton $\oSigma(\ocM_{g,n})$ and the tropical moduli space $\oM_{g,n}^\trop$ decompose naturally as disjoint unions over isomorphism classes of stable graphs of genus $g$ with $n$ legs
\[
\oSigma (\ocM_{g,n}) = \bigsqcup_\WG \ \osigma^\circ_\WG / \Aut(\WG) = \oM_{g,n}^\trop.
\]
We now show that these bijections induce an isomorphism of extended generalized cone complexes and are compatible with the naive set theoretic tropicalization map from Definition~\ref{Def:Trop}.

Choose a small toric chart $V_\WG \rightarrow  \ocM_{g,n}$ around a point in each stratum $\cM_\WG$.  Let
\[
V = \bigsqcup_\WG V_\WG,
\]
with its \'etale map $V \rightarrow \ocM_{g,n}$.  Then $\oSigma(V) = \bigsqcup_\WG \osigma_\WG$ and, by Proposition~\ref{Prop:decomposition}, the skeleton $\oSigma(\ocM_{g,n})$ is naturally identified with the colimit of the diagram $\oSigma(V_2) \double \oSigma(V)$, where $V_2 = V \times_{\ocM_{g,n}} V$.  By Proposition~\ref{Prop:generalized-decomposition}, we can replace this diagram with one in which each cone $\osigma_{\WG}$ appears exactly once.  By Proposition~\ref{prop:full-monodromy}, the self-maps $\osigma_\WG \rightarrow \osigma_\WG$ in this diagram are exactly those induced by an automorphism of $\WG$.  Furthermore, by the discussion of versal deformations in Section \ref{Sec:versal}, the closure in $\ocM_{g,n}$ of any stratum corresponding to a contraction of $\WG$ contains $\cM_{\WG}$, so the proper inclusions of faces $\jmath: \osigma_{\WG'} \rightarrow \osigma_\WG$ in this diagram are exactly those corresponding to graph contractions $\varpi: \WG \rightarrow \WG'$.

The same diagram of extended cones is considered in Section~\ref{Sec:moduli-trop-glue}, where its colimit is identified with $\oM_{g,n}^\trop$, giving an isomorphism 
$\overline \Phi_{g,n}$ 
of extended generalized cone complexes, which restricts to an isomorphism of generalized cone complexes $\Phi_{g,n} : \Sigma(\ocM_{g,n}) \rightarrow M_{g,n}^\trop$, as required.

It remains to check that this identification agrees with the naive set theoretic tropicalization map.  
Suppose $C = C_p$ is a curve over a valued field $K$ that extends to a curve $C_R$ over $\Spec R$, and let $\WG$ be the dual graph of the special fiber.  Then the point $p$ has an \'etale neighborhood in $\ocM_{g,n}$ in which each node $q_i$ of the reduction of $C$ is defined by an equation $xy = f_i$, with $f_i$ in $R$.  The naive set theoretic tropicalization map takes $p$ to the metric graph with underlying weighted graph $\WG$ in which the length of the edge $e_i$ corresponding to $q_i$ is $\val_C(f_i)$; see Definition~\ref{Def:Trop}.
By the discussion of versal deformations in Section \ref{Sec:versal}, the divisors  $D_i=(f_i)$  also give a basis for the monoid $S_p$.  The explicit description of the retraction to the skeleton, from Section~\ref{sec:explicit-retraction}, then shows that this retraction takes $p$ to the same metric graph, and the theorem follows.\qed 
 
\begin{remark}
We note that our proof of Part (1) of Theorem \ref{Th:functor} is based on the observation that $\oSigma(\ocM_{g,n})$ and $\oM_{g,n}^\trop$ are put together in the same way from the same extended cones, and does not require the analytic interpretation of $\oSigma(\ocM_{g,n})$ as a skeleton. \end{remark}

\section{Tropical tautological maps}
\label{Sec:tautological}

\subsection{Curves and tropical curves: the analogy of strata}

As discussed in Section~\ref{Sec:strata}, the strata in $\ocM_{g,n}$ correspond to stable graphs $\WG$, and the codimension of the stratum $\cM_\WG$ is the number of edges in $\WG$.  Furthermore, $\cM_\WG$ is contained in $\overline{\cM_{\WG'}}$ if and only if there is a graph contraction $\WG \rightarrow \WG'$.

The natural stratification of the tropical moduli space $M_{g,n}^\trop$ is similar, but the inclusions are reversed, as seen in Section~\ref{Sec:moduli-trop-glue}.  The stratum $M_{\WG}^\trop$ parametrizing stable tropical curves with underlying graph $\WG$ is contained in $\overline {M_{\WG'}^\trop}$ if and only if there is a graph contraction $\WG' \rightarrow \WG$.  As the orders are reversed, dimension and codimension are also interchanged; the dimension of $M_{\WG}^\trop$ is equal to the number of edges in $\WG$;
see \cite[Thm. 4.7]{C2}.  

This order reversing correspondence between stratifications may be seen as a consequence of Theorem~\ref{Th:functor}.  The tropical moduli space is the finite part of the skeleton of the moduli space of curves, and there is a natural order reversing correspondence between strata in a toroidal space and cones in the associated complex.  See Remark~\ref{rem:strata}.

\subsection{Tropical   forgetful maps and their sections}\label{Sec:forgetful}

Assume as usual $2g-2+n>0$. In the algebraic situation there is a natural {\it forgetful} morphism 
$$
\pi=\pi_{g,n}:\ocM_{g,n+1} \longrightarrow \ocM_{g,n}
$$ obtained functorially by forgetting the last marked point and replacing the curve by its stabilization,   if necessary. It was shown by Knudsen that this exhibits $\ocM_{g,n+1}$ as the universal curve over $\ocM_{g,n}$. On the level of coarse moduli spaces, we have that the fiber of $\oM_{g,n+1} \to \oM_{g,n}$ over the point $[(C;p_1,\ldots,p_n)]$ is the quotient  $C/\Aut(C;p_1,\ldots,p_n)$.  

The forgetful map $\pi_{g,n}$ has $n$ {\it tautological sections}, $\sigma_1,\ldots, \sigma_n$, corresponding to the  marked points.
 Knudsen identified the  image of $\sigma_i$   as the locus in $\ocM_{g,n+1}$  where the
 marked points
 $p_i$ and $p_{n+1}$ lie on a smooth rational component meeting the rest of the curve in a unique point,  and containing no other marked point. 

Let us construct a natural forgetful map  in the tropical setting.
$$\pi^\trop_{g,n}=\pi^\trop:
\oM_{g,n+1}^\trop \longrightarrow \oM_{g,n}^\trop.
$$
  Given a tropical curve $\TC\in \oM_{g,n+1}^\trop $, denote by   $v$  the vertex where the $(n+1)$st leg of $\TC$ is attached;
let us remove this leg
and  denote by
$\TC_*$ the resulting tropical curve. If  $\TC_*$ is not stable, then
  $h(v)=0$ and the valence of $v$ in $\TC_*$ is 2;
it is clear that  one of the two following cases occurs. 
Case (1): 
adjacent to $v$ there are  a leg $l$ and
 an edge $e_1$, whose second endpoint we denote by $v_1$. 
Case (2): adjacent to $v$ there are
  two edges $e_1,e_2$, the second    endpoint of which we denote by $v_1$ and $v_2$ respectively. In these cases we replace $\TC_*$ by a  stable tropical curve, $\widehat{\TC_*}$, as follows. 
 
In  case (1) we remove $e_1$ and $v$, and reattach a leg $l'$ at $v_1$, as in the following picture.

$$\xymatrix{
\TC_*=& *{\cdots\bullet}\ar@{-}[r]^(0.6){e_1}_(0,2){v_1}_(1,1){v }&*{\circ}
 \ar@{-}[r]^(0.5){l}\ar@{.}[rr]_(1){\infty}&&{\scriptstyle{\mathop{\bullet}}} 
&\widehat{\TC_*}=&*{\cdots\bullet} \ar@{-}[r]^(0.5){l'}\ar@{.}[rr]_(1){\infty}&&{\scriptstyle{\mathop{\bullet}}} 
  }$$

 In case (2) we define  $\widehat{\TC_*}$ to be the graph obtained by removing $v, e_1,e_2$ from $\TC_*$, and adding an edge $e'$ with endpoints $v_1,v_2$ and length 
equal to
 $\ell(e_1)+\ell(e_2)$:
 
 $$\xymatrix{
 \TC_*=&*{\cdots\bullet}\ar@{-}[r]^(0.6){e_1}_(0,2){v_1}_(1,1){v }& *{\circ}\ar@{-}[r]^(0.5){e_2}_(0.8){v_2}& *{\bullet\cdots}
 &&\widehat{\TC_*}=&*{\cdots\bullet}\ar@{-}[rr]_(0,1){v_1}_(0.9){v_2 }^(0.5){e'}&& *{\bullet\cdots}
}$$
 
Notice that   in both cases 
 we have a canonical point (not a vertex), $p_v$, on $\widehat{\TC_*}$ corresponding to $v$.
 Indeed, in case (1), if the length of $e_1$ is finite,  $p_v$ is the point on $l'$  at distance $\ell(e_1)$  from $v_1$. If $\ell(e_1)$ is infinite, then $p_v$ is   the infinity point on the new leg $l'$.

 In case (2), 
  if the edge  $e_1$ (say)  has finite length, then
 the point $p_v$ is the point of $e'$ at distance $\ell(e_1)$  from $v_1$.
 If   $\ell(e_1)=\ell(e_2)=\infty$, then $p_v$ is defined to be the infinity point on the new edge $e'$.

We thus obtain a continuous cellular map,
 $ \pi^\trop:
\oM_{g,n+1}^\trop \to \oM_{g,n}^\trop
$   sending $\TC$ to $\TC_*$ if stable, and to $\widehat{\TC_*}$ otherwise. 

Let us now show that the tropical forgetful map realizes $\oM_{g,n+1}^\trop$ as the coarse universal curve over $\oM_{g,n}^\trop$.
\begin{proposition} 
\label{prop:uni}
Let $\TC\in \oM_{g,n}^\trop$ and let $F_\TC$ be 
 the fiber  of $\pi^\trop:\oM_{g,n+1}^\trop \to \oM_{g,n}^\trop$ over   $\TC$. 
 Then $F_\TC$ is homeomorphic to $\TC/\Aut(\TC)$. 
 Moreover, if $\TC\in M^\trop_{g,n}$ then $F_\TC$
 is isometric  to $\TC/\Aut(\TC)$.
\end{proposition}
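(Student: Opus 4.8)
The plan is to identify $F_{\TC}$ with the quotient by $\Aut(\TC)$ of the extended metric space $|\TC|$ underlying $\TC$ — edges realized as closed intervals of their (possibly infinite) length, legs as copies of $\RR_{\ge 0}\sqcup\{\infty\}$, glued at the vertices — by means of an explicit ``attach the $(n{+}1)$st leg at a point'' map. First I would define, for $x\in|\TC|$, an extended tropical curve $\TC'_x$ with $n+1$ legs: if $x$ is a vertex $v$, attach the new leg at $v$; if $x$ is interior to an edge $e=(v_1v_2)$ at distance $t$ from $v_1$, subdivide $e$ at a new genus-$0$ vertex $w$ into $e_1$ of length $t$ and $e_2$ of length $\ell(e)-t$, and attach the new leg at $w$; and if $x$ lies on a leg at distance $t$ from its vertex $v$, split off an edge $e_1$ of length $t$ ending at a new genus-$0$ vertex $w$ that carries the remainder of the leg, and attach the new leg at $w$ (with $t=\infty$ permitted, giving $e_1$ of infinite length). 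I would then check that the isomorphism class of $\TC'_x$ depends only on $x$, that $\pi^\trop([\TC'_x])=[\TC]$, and — comparing with the stabilization recipe for $\pi^\trop$ in Section~\ref{Sec:forgetful} — that for every $\TC'\in F_\TC$ whose $(n{+}1)$st leg is supported at a vertex $v$ one has $\TC'\cong\TC'_{p_v}$, where $p_v\in|\TC|$ is the canonical point attached to $v$ by that recipe. This produces a map $\Psi\colon|\TC|\to\oM_{g,n+1}^\trop$ with image exactly $F_\TC$.

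Next I would verify that $\Psi$ is $\Aut(\TC)$-invariant — any $\phi\in\Aut(\TC)$ extends to an isomorphism $\TC'_x\isomto\TC'_{\phi(x)}$ fixing the last leg — and that the induced map $\overline\Psi\colon|\TC|/\Aut(\TC)\to F_\TC$ is injective: an isomorphism $\TC'_x\isomto\TC'_{x'}$ must preserve the ordering of the legs, hence carry the vertex supporting the last leg of $\TC'_x$ to that of $\TC'_{x'}$; deleting the last leg and stabilizing then yields some $\phi\in\Aut(\TC)$, and functoriality of the canonical point $p_v$ forces $\phi(x)=x'$. Combined with surjectivity, $\overline\Psi$ is a bijection.

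For continuity of $\Psi$ I would use the finite closed cover of $|\TC|$ by its cells: vertices; closed edges together with their points at infinity; and closed legs. On a vertex $v$, $\Psi$ is constant with value the point of $\oM^\trop_{g,n+1}$ given by the graph $\WG_v$ obtained by attaching a leg to $\WG$ at $v$. On a closed edge $e$, $\Psi$ is the composition of the affine-linear map $t\mapsto(\ell(e_1){=}t,\ \ell(e_2){=}\ell(e){-}t,\dots)$ into the extended cone $\osigma_{\WG'}$ of the subdivided graph $\WG'$ with the canonical map $\osigma_{\WG'}\to\oM^\trop_{g,n+1}$ from the colimit presentation in Section~\ref{Sec:moduli-trop-glue}; on a closed leg it is similar, with $e_1$ the only new edge. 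Each such composite is continuous, including at $t=\infty$. These pieces agree on overlaps, because contracting the short edge $e_1$ at a vertex endpoint is a weighted contraction $\WG'\to\WG_v$, and the two resulting maps into $\oM^\trop_{g,n+1}$ differ only by its face inclusion $\jmath_\varpi$, which already belongs to the colimit diagram; hence $\Psi$, and therefore $\overline\Psi$, is continuous. To finish, $|\TC|/\Aut(\TC)$ is compact (a compact space modulo a finite group) while $F_\TC$ is Hausdorff, being a subspace of $\oM^\trop_{g,n+1}$, which is compact Hausdorff by Theorem~\ref{Th:functor} and Proposition~\ref{prop:Limit}; so the continuous bijection $\overline\Psi$ is a homeomorphism. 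When $\TC\in M^\trop_{g,n}$ all edges of $\TC$ have finite length, and the same cellwise formulas show that $\overline\Psi$ carries the distance-to-an-endpoint coordinate on each edge or leg of $|\TC|$ to the length coordinate of the new edge $e_1$; thus $\overline\Psi$ is a piecewise-linear isometry and $F_\TC$ is isometric to $\TC/\Aut(\TC)$.

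The step I expect to be the main obstacle is not a single hard idea but the case analysis around the points at infinity: one must check that ``attach a leg at $x$'' genuinely inverts the case-split stabilization defining $\pi^\trop$ even when $x$ is the point at infinity of a leg or of an infinite edge, where the extended structure must be tracked carefully, and — relatedly — that $\Psi$ then meshes with the face maps $\jmath_\varpi$ of the colimit so as to be continuous as a map into $\oM^\trop_{g,n+1}$. Once these compatibilities are in hand, the compactness/Hausdorff endgame is routine.
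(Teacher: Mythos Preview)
Your proposal is correct and follows essentially the same approach as the paper: construct mutually inverse maps between $F_\TC$ and $\TC/\Aut(\TC)$ by ``attach the last leg at a point $p$'' in one direction and ``send $\TC'$ to the canonical point $p_v$'' in the other. The paper's proof is considerably terser---it simply asserts that the resulting bijection is ``clearly'' a homeomorphism and an isometry when edge lengths are finite---whereas you supply the cellwise continuity check via the colimit presentation and the compact-to-Hausdorff argument that the paper omits.
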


\begin{proof} 
We have a map $F_\TC \to \TC/\Aut(\TC)$ by sending   a tropical curve with $n+1$ legs 
(the last of which adjacent to the vertex $v$) to the  point $p_v$ corresponding to $v$.  To obtain the inverse, we identify $\TC/\Aut(\TC)$ to the space of points $p\in \TC$   up to isometries preserving the weights on the vertices.
Then by attaching a leg at $p$ we obtain a tropical curve, $\TC_p$, with $n+1$ marked points.
More precisely, we have the following possibilities.
If $p$ is a vertex of $\TC$ then we simply add a leg adjacent to $p$.
If $p$ is not a vertex of $\TC$, then we declare $p$ to be  a vertex of weight zero, and attach 
a leg at it; the new vertex $p$ has thus valency 3, and hence the tropical curve $\TC_p$   is stable.

It is clear that as $p$ varies in its $\Aut(\TC)$-orbit, the isomorphism class of $\TC_p$   does not change. So the above construction descends
to a map $ \TC/\Aut(\TC) \to F_\TC$, which is
 the inverse of the map defined before.
 
It is clear that this map is a homeomorphism, and an isometry if all edges of $\TC$ have finite length.
\end{proof}
\begin{remark} It would be interesting to develop the theory on a stack level in such a way that the fiber is exactly the curve.
\end{remark}

\begin{example}
Consider $\oM_{1,1}^\trop$. It has two strata, one of dimension zero and one of dimension one.
The dimension zero stratum corresponds to the (unique) curve $\TC_0$  with one vertex $v$ of weight 1, no edges,
and a leg attached to $v$.
Thus $\Aut(\TC_0)=0$.   

\begin{figure}[ht]
\begin{equation*}
\xymatrix@=.5pc{
&&&&&&&&&&&&&&&&&&&&&&&&&&&&&&&&&&&\\
&&&&&&&&&&&&&&&&&&&&&&&&&&&&&&&&&&&\\
 F_{{\TC_0}}:&&*{\bullet}\ar@{{-}{.}}[ruu]^(0.8){l_2}\ar@{-}[r]_(0,0001){v}\ar@{.}[rrr]_(0.5){l_1}&&&&&&
 *{\bullet}\ar@{-}[rrr]_(0,0001){v}^(0.5){\ell}&&&*{\circ}\ar@{{-}{.}}[ruu]^(0.8){l_2}\ar@{-}[r]\ar@{.}[rrr]_(0.5){l_1}
 &&&&&
  *{\bullet}\ar@{-}[rr]_(0,0001){v}&\ar@{.}[rr]^(0.5){\infty}&\ar@{-}[rr]&&*{\circ}\ar@{{-}{.}}[ruu]^(0.8){l_2}\ar@{-}[r]\ar@{.}[rrr]_(0.5){l_1}
&&&&&&&&&&&&&&&&&&&&&&\\
&&\ar@{|->}[dd] &&&&&&&\ar@{|->}[ddll]&&&&&&&&&&&\ar@{|->}[ddrr]\\   
 &&&&&&&&&&&&&&&&&&&&&&&&&&&&&&&&&&&&&&&&&&&& \\
&&&&&&&&&&&&&&&&&&&&&&&&&&&&&&&&&&&&&&&&&&&&
\\
{\TC_0}=&&*{\bullet}\ar@{-}[rrrrrrrrrrrrrrrrrr]_(0,0001){v}_(0.22){p}
^(0,1){\ell}&&&&*{\scriptstyle{\bullet}}&&&&&&&&&&&&&\ar@{.}[rrrr]_(1){\infty}&&&&*{\scriptstyle{\bullet}}
}
\end{equation*}
\caption{Fiber of $\pi^{\trop}$ over the smallest stratum of $\oM_{1,1}^\trop$}\label{Fig:fiber-small}
\end{figure}
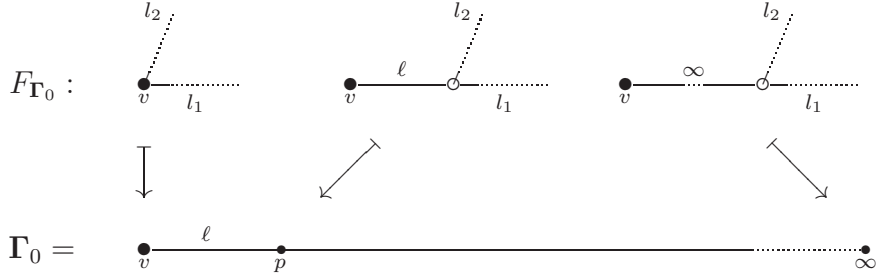
Figure \ref{Fig:fiber-small} represents 
$F_{\TC_0}\subset \oM_{1,2}^\trop$ and its isometry with $\TC_0$;
at the top we have   the three types of curves parametrized by $F_{\TC_0}$,
and, at the bottom, the corresponding point   of $\TC_0$. Notice that the curves on the right and on the left are unique, whereas  in the middle they vary  with $\ell \in \RR_{>0}$.
 The one-dimensional stratum of $\oM_{1,1}^\trop$ is a copy of $\RR_{>0}\sqcup \{\infty\}$; it parametrizes
curves $\TC_d$ whose   graph has one vertex of weight zero, one loop-edge of length      $ d\in\RR_{>0}\cup \{\infty\}$, and one leg. Then  $\Aut(\TC_d)=\ZZ/2\ZZ$
where the involution  corresponds to switching the orientation on the loop-edge.
The quotient $\TC_d/\Aut(\TC_d)$ is drawn below

\begin{equation*}
\xymatrix@=.5pc{
&&&&&&&&&&&&\ar@{.}[d]\ar@{<.}[rr]&&{\frac{d}{2}}\ar@{.>}[rr]&&\ar@{.}[d]&&&&\\
\TC_d=& *{} \ar@{-}@/^1.5pc/[rrr]^(0.5){d}\ar@{-}@/_1.5pc/[rrr]&&&*{\circ}\ar@{-}[rr]_(0.01){v}_(0.6){l_1}&\ar@{.}[rrr]_(1){\infty}&&&*{\scriptstyle{\bullet}}&&&
\TC_d/\Aut\TC_d=&
*{\scriptstyle{\bullet}}\ar@{-}[rrrr]_(0.01){p_0}&&&&*{\circ}\ar@{-}[rr]_(0.01){v}_(0.6){l_1}&\ar@{.}[rrr]_(1){\infty}&&&*{\scriptstyle{\bullet}}&&&\\ &\\
}
\end{equation*}

The following Figure \ref{Fig:fiber} represents at the top one-dimensional families of curves of $F_{\TC_d}$;
the curves  on the left  vary with $0<\ell<\frac{d}{2}$, while on the right
with $\ell'\in \RR_{>0}$.  
The middle row represents the three remaining points of $F_{\TC_d}$.
\begin{figure}[ht]
\begin{equation*}
\xymatrix@=.5pc{
&&&&&&&&&&&&&&&&&&&&&&&&&&&&&&&&&&&&&&&&&&&&&&\\
&&&&&*{\circ}\ar@{{-}{.}}[ll]_{l_1}\ar@{-}[rrr]_{\ell}  \ar@{-}@/^.9pc/[rrr]^{d-\ell}
&&&*{\circ}\ar@{{-}{.}}[rr]^{l_2}&&&&&&&
*{\circ}\ar@{-}@(ul,dl)_{d}\ar@{-}[rrr]^{\ell'}&&&*{\circ}\ar@{{-}{.}}[rru]^(0.8){l_1}\ar@{{-}{.}}[rrd]^(0.7){l_2}&&&&&&&&\\
&&&&&&&\ar@{|->}@/_.5pc/[dddddddr]&&&&&&&&\ar@{|->}@/_.5pc/[rddddddd]&&&&&&&&&&&&&&&&&&&&&&&&&&&&&&&&&&&\\
&&&&&&&&&&&&&&&&&&&&&&&&&&&&&&&&&&&&&&&&&&&&&&&\\
&&&&&&&&&&&&&&&&&&&&&&&&&&&&&&&&&&&&&&&&&&&&&&\\
&*{\circ}\ar@{{-}{.}}[l]_{l_1}\ar@{-}@/_.9pc/[rr]^{\frac{d}{2}}  \ar@{-} @/^.9pc/[rr]^{\frac{d}{2}}
&&*{\circ}\ar@{{-}{.}}[rr]^{l_2}&&&&&&&&
*{\circ}\ar@{-}@(ul,dl)_{d}\ar@{{-}{.}}[rru]^(0.8){l_1}\ar@{{-}{.}}[rrd]^(0.7){l_2}&&&&&&&
*{\circ}\ar@{-}@(ul,dl)_{d}\ar@{-}[r]\ar@{.}[rr]^{\infty}&\ar@{-}[r]&*{\circ}\ar@{{-}{.}}[rru]^(0.8){l_1}\ar@{{-}{.}}[rrd]^(0.7){l_2}&&&\\
&&&&&&&&&&&&&&&&&&&&&&&&&&\\
&&\ar@{|->}[dd] &&&&&&&&&&\ar@{|->}[dd]&&&&&&&&&\ar@{|->}[dd]\\   
 &&&&&&&&&&&&&&&&&&&&&&&&&&&&&&&&&&&&&&&&&&&& \\
&&&&&&&&&&&&&&&&&&&&&&&&&&&&&&&&&&&&&&&&&&&&\\
{\bf \TC_d/\Aut{\TC_d}}=&&*{\scriptstyle{\bullet}}\ar@{-}[rrrrrrrrrr]_(0,04){p_0}_(0.7){p}
^(0,85){\ell}&&&&&&&*{\scriptstyle{\bullet}}&&&*{\circ} \ar@{-}[rrrrrrr]_(0.7){p'}_(0.04){v}^(0.4){\ell'}&&&&&*{\scriptstyle{\bullet}}\ar@{.}[rrrr]_(1){\infty}&&&&*{\scriptstyle{\bullet}}\\
&&\ar@{.}[u]\ar@{<.}[rrrrr]&&&&&\frac{d}{2}\ar@{.>}[rrrrr]&&&&&\ar@{.}[u]&&&&&&&&&&&&&&&&&&&&&\\
&&&&&&&&&&&&&&&&&&&&&\\
}
\end{equation*}
\caption{Fiber of $\pi^{\trop}$ over   $[\TC_d]\in \oM_{1.1}^\trop$ with $d>0$.}\label{Fig:fiber}
\end{figure}

\

Finally, Figure \ref{Fig:m12bar} depicts the forgetful map from $\oM_{1,2}^\trop$ to $\oM_{1,1}^\trop$.

\begin{figure}[ht]
\begin{equation*}
\xymatrix@=0.5pc
{
&&{\scriptstyle{\bullet}}&&&&&{\scriptstyle{\bullet}}&&&&&&&&&&{\scriptstyle{\bullet}}\\
&&&&&&&&&&&&&&&&&\\
&&\ar@{.}[uu]&&&&&\ar@{.}[uu]&&&&&&&&&&\ar@{.}[uu]\\
&\oM_{1,2}^\trop=&&&&&&&&&&&&&&&&\\
&&*{\scriptstyle{\bullet}}\ar@{-}[uuu]_(0.2){{\TC_0}}\ar@{--}[rrrrrdd]\ar@{-}[rrrrrrrrrrr]&&
&&& *{\scriptscriptstyle{\bullet}}\ar@{-}[dd]^{\frac{d}{2}}\ar@{-}[uuu]_(0.2){{\TC_d/\Aut\TC_d}}&&&&&\ar@{.}[rrrrr]*{\scriptstyle{\bullet}}&&&&&\ar@{-}[uuu]_(0.2){{\TC_\infty/\Aut\TC_\infty}}&&&&\\
&&&&&&&&&&&&&&&&&&&&&&&&&\\
&&&&&&&*{\scriptstyle{\bullet}}\ar@{--}[rrrrrdd]&&&&&&&&&&&&\\ 
&&&&&&&\ar[ddd]^{\pi^\trop_{1,1}}&&&&&&&&&&\ar@{-}[uuuu]\\
&&&&&&&&&&&&\ar@{.}[rrrrrdd]&&&&&\\
&&&&&&&&&&&&&&&&&&&&&&\\
& &&&&&&&&&&&&&&&& *{\scriptstyle{\bullet}}\ar@{.}[uuuu]\\
&\oM_{1,1}^\trop=&*{\scriptstyle{\bullet}}\ar@{-}[rrrrr]_(0){\TC_0} &&&&& *{\scriptstyle{\bullet}}\ar@{-} [rrrrrr]_(0){\TC_d} &&&\ar@{.} [rrrrrrr] _(1){\TC_\infty} &&&&&&&*{\scriptstyle{\bullet}}\\
\\
}
\end{equation*}
\caption{The forgetful map  $\pi^{\trop}_{1,1}$.}\label{Fig:m12bar}
\end{figure}
\end{example}
 
 \noindent{\it {Proof of the commutativity of the first diagram of Theorem~\ref{Th:tautological}.}}
Having defined the map $\pi^\trop$   we can consider the diagram

$$\xymatrix{
\ocM_{g,n+1}^\an\ar[rr]^{\Trop}\ar[d]_{\pi^\an} &&
 \oM_{g,n+1}^{\trop}\ar[d]^{\pi^\trop} \\
\ocM_{g,n}^\an\ar[rr]^{\Trop} && \oM_{g,n}^{\trop}
} $$ 
where $\pi^\an$ is the morphism canonically associated to the algebraic forgetful map
$\pi:\ocM_{g,n+1} \longrightarrow \ocM_{g,n}$ (explicitly described below).
 Let $[C]$ be a geometric point of $\oM_{g,n+1}^\an$,
so that $[C]$ is represented by a pair 
$$
(\val_C:K\longrightarrow \RR\sqcup \{\infty\}, \  \  \mu_C:\Spec R \longrightarrow \oM_{g,n+1}),
$$
where $K \supset k$ is an algebraically closed extension field, $\val_C$ is a valuation on $K$ that extends the trivial valuation on $k$,
and $R\subset K$ is the valuation ring. 
The morphism $\mu_C$    corresponds to a family of stable curves, $C\to \Spec R$; we write $C_s$ and $C_K$ for its special and generic fiber.
Now set 
$[C']:=\pi^\an([C])\in \oM_{g,n}^\an$; this point is represented by the pair
$$
(\val_C:K\longrightarrow \RR\sqcup \{\infty\}, \  \  \pi\circ\mu_C:\Spec R \longrightarrow \oM_{g,n}).
$$
It is clear that  the special fiber $C'_s$ of $C'\to \Spec R$ is equal to $\pi(C_s)$.

Denote by $\WG$ the dual graph of $C_s$.
Recall   that we have $\Trop([C])=(\WG,\ell_C)$ where the length function $\ell_C$ is determined by the valuation $\val_C$, and by the local geometry of the family $C\to \Spec R$ at the nodes of its special fiber, $C_s$; see Definition~\ref{Def:Trop}.
Similarly, writing    $\WG'=\WG_{C_s'}$, we have
$$
\Trop(\pi^\an([C]))=\Trop([C'])=(\WG',\ell_{C'}). 
$$
Now, by our description of the map $\pi^\trop$, it is clear that the graph underlying $\pi^\trop(\Trop([C]))=\pi^\trop(\WG,\ell_C)$  is equal to 
the dual graph of the algebraic curve $\pi(C_s)$; on the other hand $\pi(C_s)=C'_s$.
We conclude that the graph underlying  $\Trop(\pi^\an([C]))$ and $\pi^\trop(\Trop([C]))$
is the same. It remains to prove that the length functions of these two points are the same.
Let us write
$$
 \pi^\trop(\Trop([C]))= \pi^\trop(\WG,\ell_C)=(\WG',\widetilde{\ell}\ ) 
$$   
where $\widetilde{\ell}$ is   determined by $\pi^\trop$, as explained before the statement of Proposition~\ref{prop:uni}.  
To show that $\ell_{C'}=\widetilde{\ell}$,   notice that
they depend on the same 
 valuation,
 namely $\val_C$; hence we have to analyze   the total spaces of the families locally at the nodes of their special fibers.

If the curve $C_s$ remains stable after removing its (n+1)-st marked point, then 
the total space of the family $C'\to \Spec R$ (regardless of its marked points) is exactly the same as that of $C\to \Spec R$, 
and the dual graph of $C'_s$  is obtained from $\WG$ by removing one leg;
so the edges are the same and 
  $\ell_{C'}$ and $\widetilde{\ell}$ are both equal to $\ell_C$.

Now suppose $C_s$ is not stable after the removal of its last marked point.
The situation is identical to the one we had in the tropical setting, when defining the map $\pi^\trop$;   as on that occasion, we now distinguish  two cases.
In case (1) the removal of the last marked point from $C_s$ creates a ``one-pointed rational tail", i.e. a
smooth rational component, $E$, attached to the rest of $C_s$ at only one node, and having only one marked point on it.  In the family $C'\to \Spec R$ the component $E$ is contracted to a smooth point of $C'_s$, and the local geometry of $C'$ near the rest of $C'_s$ is the same.
So, the graph $\WG'$ has one fewer edge than $\WG$ and both 
  $\ell_{C'}$ and $\widetilde{\ell}$ coincide with the restriction of $\ell_C$ to the edges of  $\WG'$. 
  
  The remaining case (2) is more interesting. Here the removal of the last marked point
  creates an ``unpointed exceptional component", i.e. a smooth rational component, $E$,
  with no marked points and such that 
  $$
  E\cap \overline{C_s\smallsetminus E}=\{\sp_1,\sp_2\},
$$
with $\sp_1$ and $\sp_2$ nodes of $C_s$. Let $xy=f_i$ be the local equation of $C$ at
$\sp_i$. Then, denoting by $e_i\in E(\WG)$ the edge corresponding to $\sp_i$, we have
\begin{equation}
\label{le12}
\ell_C(e_i)=\val_C(f_i),\quad \quad i=1,2.
\end{equation}
The curve $C'_s$ is obtained from $C_s$ by collapsing $E$ to a node;
its dual graph is obtained from the dual graph of $C_s$ by 
removing the last leg (adjacent to the vertex, $v$, corresponding to $E$),
removing $v$, and ``merging" $e_1$ and $e_2$ into a unique edge $e'$. 
Now, the total space of $C'$,   locally at the node of $C'_s$ corresponding to $e'$,
has equation $xy=f_1f_2$ and hence
$$
\ell_{C'}(e')=\val_C(f_1f_2)=\val_C(f_1)+\val_C(f_2).
$$
On the other hand, by definition of $\pi^\trop$, we have
$$\widetilde{\ell}(e')=\ell_C(e_1)+\ell_C(e_2)=\val_C(f_1)+\val_C(f_2)
$$
by \eqref{le12}. Hence $\ell_{C'}(e')=\widetilde{\ell}(e')$. Of course, all the remaining edges of 
$\WG'$ are naturally identified with edges of $\WG$, and the values of
 $\ell_{C'}$ and $\widetilde{\ell}$ on them is equal to the value of $\ell_C$.
 The proof of the commutativity of the first diagram in Theorem~\ref{Th:tautological} is complete. \qed

\

We now proceed to define   the  tautological sections  of the forgetful maps,
in analogy with the algebraic case.
Let $\TC=(V,E,L,h,\ell)$ be a tropical curve in $\oM^{\trop}_{g,n}$.
For  $i\in \{1,\ldots,n\}$ we define the tropical curve 
$$\TC^i=(V^i,E^i,L^i,h^i,\ell^i)$$
as follows.
Let $l_i\in L$ be the $i$-th leg of $\TC$ and $v\in V$ its endpoint.
$\TC^i$ is obtained by attaching an edge $e_0$ at $v$
whose second endpoint we denote by  $v_0$.
We set
$V^i=V\cup \{v_0\}$ and $E^i=E\cup \{e_0\}$;   the weight function $h^i$ is the extension of
$h$ such that $h^i(v_0)=0$;   the length function $\ell^i$ is the extension of
$\ell$ such that $\ell^i(e_0)=\infty$.
Finally we remove the leg $l_i$  and attach two legs
at $v_0$, denoted by $ l'_i$ and $l_{n+1}$; summarizing
  $L^i=L\smallsetminus\{l_i\}\cup \{l'_i, l_{n+1}\}$. Here is an picture with $i=n=1$:
 
  \begin{figure}[ht]
\begin{equation*}
\xymatrix@=.5pc{
\\
&&&&&&&&&&&&&&&&&&&&&&&&&&&&&&&&&&\\
 \TC=&*{\bullet}
  \ar @{-} @/_.2pc/[rrrr] \ar @{-} @/_1.5pc/[rrrr]\ar@{-} @/^1.1pc/[rrrr]&&&&*{\bullet}\ar@{-}[r]_(0,1){v}\ar@{.}[rrr]_(0.5){l_1}&&&&&
 \TC^1=&  *{\bullet} \ar @{-} @/_.2pc/[rrrr] \ar @{-} @/_1.5pc/[rrrr]\ar@{-} @/^1.1pc/[rrrr]&&&&*{\bullet} \ar@{-}[rr]^(1.1){e_0}_(0,1){v}&\ar@{.}[rrr]&&\ar@{-}[rr]&&*{\circ}\ar@{{-}{.}}[rrru]^(0.6){l_2}\ar@{-}[r]_(0,1){v_0}\ar@{.}[rrr]_(0.5){l_1'}&&&&&&&&&&&&&&&&\\
&&&&&&&&&&&&&&&&&&&&&&\\
}
\end{equation*}
\end{figure}

Now we can state
\begin{proposition}
The tropical forgetful map $\pi^\trop:\oM_{g,n+1}^\trop \to \oM_{g,n}^\trop$
admits $n$ continuous sections $\sigma_i^\trop: \oM_{g,n}^\trop\to :\oM_{g,n+1}^\trop$,
with $\sigma_i^\trop(\TC):=\TC^i$ for every $\TC\in  \oM_{g,n}^\trop$. The diagram
$$
\xymatrix{
\oM_{g,n}^\an\ar[d]_{\sigma_i^\an}\ar[rr]^\Trop && \oM_{g,n}^\trop\ar[d]^{\sigma_i^\trop} \\
\oM_{g,n+1}^\an\ar[rr]^\Trop && \oM_{g,n+1}^\trop
}
$$ is commutative.
\end{proposition}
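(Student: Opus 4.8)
The plan is to first settle the purely tropical assertions — that $\TC^i$ is a legitimate stable extended tropical curve, that $\TC\mapsto\TC^i$ is continuous, and that it splits $\pi^\trop$ — and then to verify the compatibility with $\Trop$ by tracing a single point of $\oM_{g,n}^\an$ through the square. For the first point, I would check that $[\TC^i]\in\oM_{g,n+1}^\trop$: the new vertex $v_0$ has weight $0$ and valence $3$ (the edge $e_0$ and the legs $l_i'$, $l_{n+1}$), so it is stable; the valence of the endpoint $v$ of $l_i$ is unchanged, a leg having been traded for an edge, so no other stability inequality is affected; and $e_0$ is a bridge with $h^i(v_0)=0$, so the genus remains $g$. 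Since $e_0$ always has length $\infty$, this already shows that $\sigma_i^\trop$ maps $\oM_{g,n}^\trop$ into the part at infinity $\oM_{g,n+1}^\trop\smallsetminus M_{g,n+1}^\trop$, the tropical analogue of Knudsen's observation that $\sigma_i(\ocM_{g,n})$ lies in the boundary.

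For continuity — and indeed to make sense of $\sigma_i^\trop$ as a morphism of extended generalized cone complexes — I would use the presentation $\oM_{g,n}^\trop=\varinjlim(\osigma_\WG,\jmath_\varpi)$ of Section~\ref{Sec:moduli-trop-glue}. For each stable weighted graph $\WG$ of genus $g$ with $n$ legs let $\WG^i$ denote the graph underlying $\TC^i$; there is a morphism of extended cones $\osigma_\WG\to\osigma_{\WG^i}$, $\ell\mapsto(\ell,\infty)$, realizing $\osigma_\WG$ as the face at infinity of $\osigma_{\WG^i}$ along which the $e_0$-coordinate equals $\infty$. These maps are compatible with the face inclusions $\jmath_\varpi$ — a contraction $\varpi\colon\WG\to\WG'$ induces a contraction $\varpi^i\colon\WG^i\to(\WG')^i$ that does not touch $e_0$ — and equivariant for the inclusion $\Aut(\WG)\hookrightarrow\Aut(\WG^i)$; hence, passing to colimits as in Section~\ref{Sec:generalized}, they assemble into a morphism $\sigma_i^\trop\colon\oM_{g,n}^\trop\to\oM_{g,n+1}^\trop$ of extended generalized cone complexes sending $[\TC]$ to $[\TC^i]$, which is in particular continuous. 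That it is a section of $\pi^\trop$ is then an immediate unwinding of the recipe for $\pi^\trop$: in $\TC^i$ the $(n+1)$-st leg is at $v_0$, and after removing it $v_0$ becomes a weight-$0$ valence-$2$ vertex adjacent to a leg ($l_i'$) and an edge ($e_0$), which is exactly Case~(1); deleting $e_0$ and $v_0$ and re-attaching a leg at the other endpoint of $e_0$ returns $\WG$ with its original legs and lengths, so $\pi^\trop\circ\sigma_i^\trop=\id$.

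Finally, for the square: let $[C]\in\oM_{g,n}^\an$ be represented by a stable curve $C\to\Spec R$ over a valuation ring with fraction field $K$ and valuation $\val_C$, with $\WG=\WG_{C_s}$, so that $\Trop([C])=(\WG,\ell_C)$ with edge lengths as in Definition~\ref{Def:Trop}. The point $\sigma_i^\an([C])$ is represented by $\sigma_i\circ\mu_C\colon\Spec R\to\ocM_{g,n+1}$, which by Knudsen's description of the tautological section classifies the family obtained from $C\to\Spec R$ by bubbling off a rational bridge along the marked section $p_i$ and placing $p_i$ and $p_{n+1}$ on it. Its special fiber therefore has dual graph $\WG^i$; the nodes inherited from $C_s$ keep their local equations and so contribute the same lengths $\ell_C$; and the new node $e_0$ is present already in the generic fiber — equivalently, $\sigma_i\circ\mu_C$ factors through the boundary divisor of $\ocM_{g,n+1}$ along which that node does not smooth, so in a versal chart as in Section~\ref{Sec:versal} its defining equation pulls back to $0$ and $\ell(e_0)=\val_C(0)=\infty$. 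Hence $\Trop(\sigma_i^\an([C]))=\TC^i=\sigma_i^\trop(\Trop([C]))$, and the square commutes. The main obstacle is this last step, namely identifying the family classified by $\sigma_i\circ\mu_C$ precisely enough via Knudsen's construction to see that the inserted edge $e_0$ picks up infinite length; everything else is formal bookkeeping with the cone-complex presentations and with the explicit description of $\pi^\trop$ given above.
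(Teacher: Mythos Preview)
Your proof is correct and follows the same line as the paper's. The paper's own proof is extremely terse: it declares $\TC^i\in\oM_{g,n+1}^\trop$ and continuity to be clear, checks $\pi^\trop(\TC^i)=\TC$ exactly as you do via Case~(1), and for the commutativity of the square simply says ``the proof is identical to the proof of commutativity of the clutching diagram below'' --- which is precisely the argument you write out (the new node at the bubbled rational bridge persists in the generic fiber, hence has local equation $xy=0$ and picks up length $\val_C(0)=\infty$, while the old nodes are untouched). Your colimit argument for continuity is more explicit than anything the paper provides, but it is the natural way to substantiate what the paper takes for granted rather than a different route.
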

\begin{proof}
It is clear that $\TC^i\in \oM_{g,n+1}^\trop$ and that the map $\sigma_i^\trop$ is continuous.
We need to prove that $\pi^{\trop}(\TC^i)=\TC$. Indeed removing the last leaf from $\TC^i$ gives a tropical curve $(\TC^i)_*$ which is not stable,  as the vertex $v_0$ has valency 2. Hence
$\pi^{\trop}(\TC^i)=\widehat{(\TC^i)_*}$; as $\widehat{(\TC^i)_*}=\TC$ the first statement is proven. The proof of commutativity is identical to the proof of commutativity of the clutching diagram below. \end{proof}

\subsection{Tropical clutching   maps} In the algebro-geometric setting, if $g=g_1+g_2$ and $n = n_1+n_2$,
always assuming $2g_i-2+n_i>0$, we have the   so-called {\it clutching} maps  $\kappa=\kappa_{g_1,n_1,g_2,n_2}$
$$\begin{aligned}
 \ocM_{g_1,n_1+1}\  \  &  \times \   \  \  \  \  \ocM_{g_2,n_2+1}  & \stackrel{\kappa}{\longrightarrow} &\  \ocM_{g,n}\  \  \  \  \  \\
(C_1;p_1^1,\ldots,p^1_{n_1+1})&\   , \  (C_2;p^2_1,\ldots,p^2_{n_2+1})   & \mapsto & \  (C ;p_1,\ldots,p_{n}).
  \end{aligned}
$$

These are obtained  by gluing     $C_1$   with $C_2$ by identifying $p^1_{n_1+1}=p^2_{n_2+1}$ 
in such a way that in $C$ the intersection of
 $C_1$ with $C_2$ consists of  exactly  one (separating) node.  We also write $\kappa$ for the induced clutching map on coarse moduli schemes $\oM_{g_1,n_1+1} \times \oM_{g_2,n_2+1}  \stackrel{\kappa}{\longrightarrow}  \oM_{g,n}$.

We now construct the analogous maps in the tropical setting,
always keeping the numerical assumptions of the algebraic case.
To define the tropical clutching map,
 $$\begin{aligned}
\kappa^{\trop}:&\:\oM_{g_1,n_1+1}^\trop &\times& \oM_{g_2,n_2+1}^\trop   &  \longrightarrow  &  \oM_{g,n}^\trop  \\
&\:\:\; \;\;\;\TC_1\   &\  , &\; \;\;\TC_2\  \  \  \  \   \  \  & \mapsto &\;  \TC\  \   \     
  \end{aligned}
$$
we attach  the last leg of $\TC_1$ (adjacent to the vertex $v_1$)  to the last leg of $\TC_2$ (adjacent to   $v_2$)  by identifying their infinite points

$$\xymatrix{
*{\ldots\bullet}  \ar@{-}[rr]_(0.1){v_1}&\ar@{.}[rr]_(1){\infty_1}&& {{\scriptstyle {\bullet}}}\ar@{<~>}[r] &{{\scriptstyle{\bullet}}}\ar@{.}[rrr]_(0.03){\infty_2}&&&*{\bullet\ldots} \ar@{-}[ll]^(0.1){v_2}
}$$
to form an edge $e$ of $\TC$ with endpoints $v_1$ and $v_2$: 
$$\xymatrix{
 *{\ldots\bullet}\ar@{-}[rrrr]_(0.1){v_1}_(0.9){v_2}^(0.5){e}
  &&&& *{\bullet\ldots}   
}$$
The length of $e$ is, quite  naturally, set to be equal to $\infty$.
Since the new edge $e$ is a bridge (i.e. a disconnecting edge) of the underlying graph, we have that the genus of $\TC$ is equal to the sum of the genera of $\TC_1$ and $\TC_2$. Hence we have that the image of $\kappa^\trop$ lies in $\oM_{g,n}^\trop$.
Observe that this image     is entirely contained in the locus of extended tropical curves having at least one bridge of infinite length.

\

  \noindent{\it {Proof of the commutativity of the clutching  diagram of Theorem~\ref{Th:tautological}.}}
We begin by reviewing the map $\kappa^\an$ of our diagram, obtained by passing to coarse moduli spaces and then taking analytifications:
$$\xymatrix{
\oM_{g_1,n_1+1}^\an\times \oM_{g_2,n_2+1}^\an \ar[rrr]^{\Trop\times \Trop}\ar[d]_{\kappa^\an} &&&
 \oM_{g_1,n_1+1}^{\trop} \times \oM_{g_2,n_2+1}^{\trop}\ar[d]^{\kappa^\trop} \\
\oM_{g,n}^\an\ar[rrr]^{\Trop} && &
\oM_{g,n}^{\trop}.
}$$
The map $\kappa^\an$ is defined by functoriality of analytification, and can be understood as follows. Fix again an algebraically closed field $K$ with valuation $\val: K \to \RR \sqcup \{\infty\}$, valuation ring $R$ and special point $s$.  Consider a $K$-point  
$[C^1,C^2]\in \oM_{g_1,n_1+1}^\an\times \oM_{g_2,n_2+1}^\an$.
The point is simply  a morphism $\Spec K \to \oM_{g_1,n_1+1}\times \oM_{g_2,n_2+1}$; since the coarse moduli space is proper it  extends to a morphism 
we denote $$\mu_1\times \mu_2:\Spec R\to \oM_{g_1,n_1+1}\times \oM_{g_2,n_2+1}.$$ 

For $i=1,2$ the two projections  provide us with $K$ valued points   $[C^i]\in \oM_{g_i,n_i+1}^\an$ represented by
$$
 \mu_i: \Spec R\to \oM_{g_i,n_i+1}, 
 $$ 
giving two stable pointed curves $C^i\to \Spec R$.

Write $\kappa^\an([C^1,C^2]) = [C]\in \oM_{g,n}^\an$; it is represented by the composition   
 $$
 \kappa\circ(\mu_1\times \mu_2):\Spec R \to \oM_{g,n},
$$
 gluing the two families of curves $C_i\to \Spec R$ along the two sections 
$\sigma_{n_i+1}: \Spec R \to C_i.$

Now, $\Trop\times\Trop  ([C^1,C^2])=\bigr((\WG_1,\ell_1),(\WG_2,\ell_2)\bigl)$
 with $\WG_i = \WG_{C^i_s}$ and $\ell_i$ defined  in Definition~\ref{Def:Trop}. Next
 $$
 \kappa^{\trop}(\Trop\times\Trop ([C^1,C^2]))=(\WG, \widetilde{\ell})
$$
 where, according to our  description above,
 $\WG$ is obtained from $\WG_1$ and $\WG_2$ by merging their respective last legs into one edge, denoted by $e$ (which is thus a bridge of $\WG$).
The definition of $\widetilde{\ell}$ is as follows: 
  \begin{displaymath}
\widetilde{\ell}(\widetilde{e})=\left\{ \begin{array}{ll}
\ell_i(\widetilde{e}) &\text{ if } \widetilde{e}\in E(\WG_i), \quad i=1,2.  \\
+\infty  &\text{ otherwise i.e. if } \widetilde{e}=e.\\
\end{array}\right.
\end{displaymath}
 Consider now $[C]$ and its associated family, 
 $C\to \Spec R$.
 We have a diagram
 $$\xymatrix{
C^1\sqcup C^2\ar[r]^(0.6){\eta}\ar[dr]&C\ar[d]\\
&\Spec R 
}$$
 where  
 the map $\eta$ glues together the last marked points of $C^1$ and $C^2$; let us write $C^*=C^1\sqcup C^2$
 for simplicity.

The special fiber of $C$ is $C_s=\kappa([C^1_s, C^2_s])$ and hence its dual graph
 is equal to $\WG$.

 Let us look at the local geometry   at a node   of $C_s$.
Pick the node corresponding to the new edge $e$, then the generic fiber $C_K$
 has a node specializing to it (the node corresponding to the gluing of the last marked point of $C^1_K$ with the last marked point of $C^2_K$) and hence the local equation at this node
 is $xy=0$. Therefore $\ell_{C}(e)=\infty =\widetilde{\ell}(e)$, as required.
 
 Consider now a node corresponding to an edge $\widetilde{e}\neq e$; without loss of generality this edge $\widetilde{e}$ corresponds to a node of
 $C^1_s$, at which the local equation of $C^1$ is
 $xy=f$ with $f\in R$.  This also serves as a  local equation of
 $C^*$ at the corresponding node, and since it is disjoint from $\sigma_{n +1}$, also  a  local equation of
 $C$. Therefore
 $$
 \ell_{C}(\widetilde{e})=\val(f)=\widetilde{\ell}(\widetilde{e})
 $$
and we are done. \qed
\subsection{Tropical  gluing maps}
In the algebraic setting, for $g>0$ there is a map $$\gamma: \ocM_{g-1,n_1+2} \to \ocM_{g,n}$$ 
obtained by gluing the last two marked points.  We also write $\gamma$ for the induced gluing map on coarse moduli spaces $\gamma: \oM_{g-1,n_1+2} \to \oM_{g,n}.$ 
We  now define the  tropical gluing maps 
  $$\gamma^\trop:\oM_{g-1,n+2}^\trop \longrightarrow \oM_{g,n}^\trop$$
   (always assuming $g>0$).  The procedure is similar to
   the definition of the tropical clutching map;
  $\gamma^\trop$ maps  a tropical curve $\TC$ with $n+2$ legs
  to the tropical curve $\TC'$ obtained by attaching the last two legs of $\TC$, so as to form
  an edge $e'$ of infinite length for $\TC'$. It is clear that $\TC'$ has now only $n$ legs, and its genus is
  gone up by one, as the new edge $e'$ is not a bridge of $\TC'$.

 \noindent{\it {Proof of the commutativity of the gluing  diagram of Theorem~\ref{Th:tautological}.}}
The diagram whose commutativity we must prove is the following.
$$ \xymatrix{
\oM_{g-1,n+2}^\an\ar[rr]^{\Trop}\ar[d]_{{\gamma}^\an} &&
 \oM_{g-1,n+2}^{\trop}\ar[d]^{\gamma^\trop} \\
\oM_{g,n}^\an\ar[rr]^{\Trop} && 
\oM_{g,n}^{\trop}.
}
$$
The proof follows the same pattern 
used to prove the commutativity of the first diagram in the theorem.
Let $[C]\in \oM_{g-1,n+2}^\an$ 
be     represented by the pair 
$$ 
(\val_C:K\longrightarrow \RR\sqcup \{\infty\}, \  \  \mu_C:\Spec R \longrightarrow \oM_{g-1,n+2}).
$$ 
Denote by $\WG$ the dual graph of $C_s$.
Now set 
$\gamma^\an([C])=[C']\in \oM_{g,n}^\an$,   represented by the pair
$$
(\val_C:K\longrightarrow \RR\sqcup \{\infty\}, \  \  \gamma\circ\mu_C:\Spec R \longrightarrow \oM_{g,n}).
$$
The special fiber $C'_s$ of $C'\to \Spec R$ is equal to $\gamma(C_s)$.
It is clear that the graph underlying $\Trop(\gamma^\an([C]))$ and 
the graph underlying $\gamma^\trop(\Trop([C]))$ are isomorphic to the dual graph
of $\gamma(C_s)$, denoted by $\WG'$.
It remains to show the length functions on $E(\WG')$
of $\Trop(\gamma^\an([C]))$ and $\gamma^\trop(\Trop([C]))$
 coincide.

Recall that $\WG'$ is obtained from $\WG$ by adding a new edge, $e'$, joining the endpoints
of the last two legs (and removing these   two legs). The length
of $e'$ in the tropical curve $\gamma^\trop(\Trop([C]))$ is set to be equal to $\infty$,
whereas the length of every other edge $e\in E(\WG')\smallsetminus\{e'\}=E(\WG)$
is $\ell_C(e)$.

Now consider $\Trop(\gamma^\an([C]))=(\WG', \ell_{C'}$); recall that $\ell_{C'}$ depends on the local
geometry of   $C'\to \Spec R$ near the nodes of $C_{s}'$.
Consider the node $\sp'$ corresponding to the new  edge $e'$;
 the generic fiber of  $C'\to \Spec R$ also has   a node specializing to $\sp'$, therefore the local equation
 of $C'$ at $\sp'$ is $xy=0$. Hence 
 $$\ell_{C'}(e')=\val_C(0)=\infty$$ just as in $\gamma^\trop(\Trop([C]))$. Locally at every other node of $C_{s}'$  we have that $C$ and $C'$ are isomorphic,
 hence on the corresponding edges of $\WG'$ we have $\ell_{C'}=\ell_C$ .
 The proof is now complete \qed.

\subsection{Functorial interpretation of the maps} We have defined tropical forgetful, clutching and gluing maps, as well as sections, using the modular meaning of $\oM_{g,n}^\trop$.  Theorem \ref{Th:functor} allows us to interpret these maps in terms of the functorial properties of the maps $\boldp$. 

First note that all the algebraic tautological maps are sub-toroidal: the map $\pi$ is toroidal since it is a family of nodal curves, see \cite[2.6]{Abramovich-Karu}. The section $\sigma_i$ is an isomorphism onto a toroidal substack, and the clutching and gluing maps 
 factor through an \'etale covering of degree one or two followed by a normalization map, by    \cite[Proposition XII.10.11]{Arbarello-Cornalba-Griffiths},
so they are indeed sub-toroidal. By Proposition \ref{Prop:functoriality} we have a commutative diagram
$$\xymatrix{
\oM_{g,n+1}^\an\ar[rr]^\boldp \ar[d]_{\pi^\an} && \oSigma(\ocM_{g,n+1})\ar[d]^{\oSigma(\pi)} \\
\oM_{g,n}^\an\ar[rr]^\boldp  && \oSigma(\ocM_{g,n}) 
}$$
and similarly for the maps $\sigma_i,\gamma,\kappa$. Theorem \ref{Th:functor}  extends this to a commutative diagram
$$\xymatrix{
\oM_{g,n+1}^\an\ar[rr]_\boldp \ar[d]_{\pi^\an}\ar@/^1.5pc/[rrrr]^{\Trop}&& \oSigma(\ocM_{g,n+1})\ar[d]^{\oSigma(\pi)}\ar[rr]_\oPhi^{\sim}&&\oM_{g,n+1}^\trop\ar@{.>}[d] \\
\oM_{g,n}^\an\ar[rr]^\boldp\ar@/_1.5pc/[rrrr]_{\Trop}  && \oSigma(\ocM_{g,n}) \ar[rr]^\oPhi_{\sim}&&\oM_{g,n+1}^\trop.
}$$
Since the two arrows designated by $\oPhi$ are isomorphisms,  there is necessarily a unique arrow $\oPhi\circ\oSigma(\pi)\circ \oPhi^{-1}$ making the diagram commutative;  it therefore must coincide with the map $\pi^\trop$ we defined above. The same holds for the maps  $\sigma_i,\gamma,$ and $\kappa$.

\subsection{Variations on the tropical  gluing and clutching maps}
In the algebro-geometric situation,
the clutching  and gluing maps together cover the entire boundary of  $\ocM_{g,n}$, since the result of desingularizing a node while adding its two branches as marked points
is either the disjoint union of two stable curves with suitable genera $g_1,g_2$, and suitable $n_1+1$ and $n_2+1$ marked points, or one curve of genus $g-1$ with $n+2$ marked points.
The situation is quite different in the tropical setting, indeed we have the following fact.
\begin{lemma}  In $ \oM_{g,n}^\trop$ the union of the image of $\gamma^\trop_{g,n}$
 with the images of all the clutching maps $\kappa^\trop_{g_1,n_1,g_2,n_2}$ is equal to 
 $\oM^{\trop}_{g,n}\smallsetminus M^{\trop}_{g,n}$, i.e. to
 the locus of tropical curves having at least one edge of infinite length.
\end{lemma}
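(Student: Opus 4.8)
The plan is to prove the two inclusions separately. The inclusion of the left-hand union into $\oM_{g,n}^\trop \setminus M_{g,n}^\trop$ is immediate from the constructions: a point in the image of $\kappa^\trop_{g_1,n_1,g_2,n_2}$ carries, by definition, a bridge of length $\infty$, and a point in the image of $\gamma^\trop_{g,n}$ carries a non-separating edge of length $\infty$; either way the extended tropical curve has an edge of infinite length, so it does not lie in $M_{g,n}^\trop$.

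For the reverse inclusion I would start with a stable extended tropical curve $\TC = (\WG,\ell)$ of genus $g$ with $n$ legs having some edge $e$ with $\ell(e) = \infty$, and \emph{cut} $\TC$ along $e$: realizing $e$ as $(\RR_{\geq 0}\sqcup\{\infty\}) \cup (\{-\infty\}\sqcup\RR_{\leq 0})$ with the two points at infinity identified, delete that identified point and separate the two halves, so that each endpoint of $e$ acquires a half-line $\RR_{\geq 0}\sqcup\{\infty\}$, i.e.\ a new leg. If $e$ is a bridge of $\WG$, cutting disconnects $\WG$ and produces two extended tropical curves $\TC_1,\TC_2$, where $\TC_i$ carries $n_i$ of the original legs together with one new leg, declared to be its last; here $g(\TC_1)+g(\TC_2)=g$ since $h^1$ and the total vertex weight are additive across a bridge, and $n_1+n_2 = n$, so $\TC = \kappa^\trop_{g_1,n_1,g_2,n_2}(\TC_1,\TC_2)$ once the leg orderings are matched to the convention of $\kappa^\trop$. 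If $e$ is not a bridge — in particular if $e$ is a loop — cutting keeps $\WG$ connected and drops $h^1$ by one, so the result $\TC'$ is a stable extended tropical curve of genus $g-1$ with $n+2$ legs (the two new ones declared last); since $e$ lies on a cycle we have $g \geq 1$, so $\gamma^\trop_{g,n}$ is defined and $\TC = \gamma^\trop_{g,n}(\TC')$.

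Two things remain to check, and neither is hard. First, the curve obtained by cutting is again stable: at each endpoint of $e$ the valence is unchanged, since a half-edge of $e$ is replaced by a leg and a loop, which contributes $2$ to the valence, becomes two legs, while the genus function is unchanged; hence stability of $\TC$ propagates to the cut curve, so it lies in the relevant moduli space and the numerical conditions needed to define $\kappa^\trop$ or $\gamma^\trop$ hold automatically. Second, applying the corresponding tautological map to the cut curve recovers $\TC$, because attaching (resp.\ gluing) the new legs to form an edge of length $\infty$ is exactly the inverse of the cut. The only step demanding any real attention is the combinatorial bookkeeping — tracking which legs and vertices land on which side of a bridge, checking the genus additivity, and keeping the bridge and loop cases cleanly separated — and the mild subtlety is precisely the infinite loop-edge, which is the case routed through $\gamma^\trop$ above.
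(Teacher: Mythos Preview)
Your proof is correct and follows essentially the same route as the paper: choose an edge of infinite length, cut there to replace it by two new legs, and then branch on whether the edge was a bridge (clutching) or not (gluing). The paper dispatches the forward inclusion and the stability check with ``it is clear,'' so your version is simply more explicit on details the paper leaves implicit, but the underlying argument is identical.
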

\begin{proof}
We just need to prove that a point of $\TC\in \oM^{\trop}_{g,n}\smallsetminus M^{\trop}_{g,n}$ lies in the image of a clutching or gluing map. Let $e$ 
be an edge of $\TC$ having infinite length, write $v_1$ and $v_2$ for its (possibly equal) endpoints.  Let $\TC'$ be the tropical curve
obtained by removing $e$ and attaching a leg $l_1$ at $v_1$ and a leg $l_2$ at $v_2$.
 If $e$ is not a bridge, $\TC'$ is easily seen to be a stable tropical curve of genus $g-1$ with
 $n+2$ marked ponts (which we order so that $l_1$ and $l_2$ are the last ones); it is clear that the image of $\TC'$ via the gluing map is $\TC$.
 
 If $e$ is a bridge, then $\TC'=\TC_1\sqcup \TC_2$,   with $\TC_i$ containing the vertex $v_i$; it is clear that $\TC_i$ is a    stable tropical curve 
 whose last leg we set equal to $l_i$, for $i=1,2$.
Then $\TC$ is equal to $\kappa^{\trop}(\TC_1,\TC_2)$.
\end{proof}

The locus of smooth algebraic curves in $\ocM_{g,n}$ corresponds 
in the tropical moduli space to
the smallest stratum,
 that is the
 single point 
 ${\bullet_{g,n}}\in \oM^{\trop}_{g,n}$ parametrizing 
the tropical curve whose   graph has a unique vertex of weight $g$  (and no edges).
 Hence  the boundary
  of $\ocM_{g,n}$  corresponds 
the open subset   $\oM^{\trop}_{g,n}\smallsetminus \{{\bullet_{g,n}}\}.$

In this sense, a  tropical counterpart of the fact that the clutching and gluing maps cover 
the boundary of  $\ocM_{g,n}$ should be that some generalized tropical gluing and clutching maps cover $\oM^{\trop}_{g,n}\smallsetminus \{{\bullet_{g,n}}\}.$ 

We shall now define a generalization of the previously defined  gluing and clutching maps having that goal in mind.

We denote $\RR_+=\RR_{>0}\sqcup \{\infty\}$.
For every pair $(x,y)\in \RR_+\times \RR_+$,
we have a new tropical gluing map  $\gamma^{\trop}[x,y]$
$$\gamma^\trop[x,y]:\oM_{g-1,n+2}^\trop  \to \oM_{g,n}^\trop$$
constructed as follows. Denote  by $l_{n+1}$ and $l_{n+2}$ the last two legs of $\TC'\in\oM_{g-1,n+2}^\trop $, and  by $v_{n+1}$ and $v_{n+2}$ the vertex they are adjacent to.
As in the definition of $\gamma$ in the previous section, we send $\TC'$
  to a curve in $\TC\in \oM_{g,n}^\trop$ by merging $l_{n+1}$ and $l_{n+2}$ 
into one edge $e$ of $\Gamma$. The difference is that now the new edge will have length
equal to $\ell(e)=x+y$. This is obtained by fixing on $l_{n+1}$ 
a point $p_{n+1}$ of distance $x$ from $v_{n+1}$, and   
``clipping off" the remaining infinite line; similarly, we fix a point $p_{n+2}$ of distance $y$ from $v_{n+2}$,
on $l_{n+2}$ 
and disregard the rest of the leg; then we glue  $p_{n+1}$ to $p_{n+2}$ obtaining an edge between $v_{n+1}$ and $v_{n+2}$  of length $x+y$.
It is clear that $\gamma^{\trop}[x,y]$ is  continuous.
Observe that $\gamma^{\trop}[x,y]$ depends only on $x+y$ (and hence it is symmetric) and that
  the ``more natural" gluing map $\gamma^\trop$ defined before is obtained as 
$$\gamma^\trop=\gamma^\trop[\infty, x]=\gamma^\trop[\infty, \infty].$$
Summarizing, we have defined a continuous family of maps
$$
\gamma^\trop[\,,\,]:\oM_{g-1,n+2}^\trop  \times \RR_+\times \RR_+\longrightarrow \oM_{g,n}^\trop.
$$
In a completely analogous way we define the generalized clutching maps
$\kappa^\trop[\,,\,]=\kappa^\trop_{g_1,n_1,g_2,n_2}[\,,\,]$:

$$
\kappa^\trop[\,,\,]:\oM_{g_1,n_1+1}^\trop\times  \oM_{g_2,n_2+1}^\trop \times \RR_+\times \RR_+\longrightarrow\oM_{g,n}^\trop.
$$
As before, $\kappa^\trop[x,y]=\kappa^\trop[x,y]$ and the original clutching map is recovered as 
$$\kappa^{\trop}=\kappa^{\trop}[x, \infty]=\kappa^{\trop}[\infty, \infty].
$$
\begin{remark}
It is clear that the union of the image of $\gamma_{{g-1,n+2}}^\trop[\,,\,]$
with the images of the maps $\kappa^\trop_{g_1,n_1,g_2,n_2}[\,,\,]$
is equal to $\oM^{\trop}_{g,n}\smallsetminus \{{\bullet_{g,n}}\}.$
\end{remark}
 
\begin{remark}
These generalized clutching and gluing maps naturally lift to Berkovich analytic spaces in the following way, as suggested by a referee.  We describe the construction for the clutching map; the gluing map may be handled similarly.  Factor the algebraic clutching map $\kappa$ as
\[
\xymatrix{ && \ocM_{g,n+1}\ar[d]^\pi\\
\ocM_{g_1,n_1+1}\times \ocM_{g_2,n_2+1} \ar^{\kappa'}[urr]  \ar[rr]^(.7)\kappa&&\ocM_{g,n} 
}
\]
by setting  $\kappa'([C_1],[C_2]) =[C_1\cup \PP^1\cup C_2]$ where $\PP^1$ is a projective line attached at 0 to the $(n_1+1)$st point of $C_1$, attached at $\infty$ to  the $(n_2+1)$st point of $C_2$, and having the new $(n+1)$st  marked point at $1\in \PP^1$.

Let $U \subset \oM_{g, n+1}^\an$ be the open subset parametrizing curves whose reduction is in the image of $\kappa'$.  Then $U$ is the preimage of an open subset of $\oSigma(\oM_{g,n+1})$ which is naturally identified with $\oM_{g_1,n_1+1}^\trop\times  \oM_{g_2,n_2+1}^\trop \times \RR_+\times \RR_+ $.  Composing with the forgetful map gives $U \rightarrow \oM_{g,n}^\an$, and the induced map on skeletons is precisely $\kappa^\trop[ \, , \, ]$.


\end{remark}

\providecommand{\bysame}{\leavevmode\hbox to3em{\hrulefill}\thinspace}
\providecommand{\MR}{\relax\ifhmode\unskip\space\fi MR }
\providecommand{\MRhref}[2]{%
  \href{http://www.ams.org/mathscinet-getitem?mr=#1}{#2}
}
\providecommand{\href}[2]{#2}

\end{document}